 \theoremstyle{plain}    
 \newtheorem{thm}{Theorem}[section]
 \numberwithin{equation}{section} 
 \numberwithin{figure}{section} 
 \theoremstyle{plain}
 \theoremstyle{plain}    
 \newtheorem{lem}[thm]{Lemma} 
 \theoremstyle{plain}    
 \newtheorem{prop}[thm]{Proposition} 
 \theoremstyle{plain}    
 \newtheorem{cor}[thm]{Corollary} 
\begin{document}

\title{Intersections of Schubert Varieties and Eigenvalue Inequalities in
an Arbitrary Finite Factor}

\author{H. Bercovici, B. Collins, K. Dykema, W. S. Li, and D. Timotin}

\subjclass{Primary: 14N15; Secondary: 15A42, 46L10, 46L54, 52B05, 05E99.}

\thanks{HB, KD, and WSL were supported in part by grants from the National
Science Foundation. BC was supported in part by an NSERC grant.}

\address{HB: Department of Mathematics, Indiana University, Bloomington, IN
47405, USA}

\email{bercovic@indiana.edu}

\address{BC: Department of Mathematics and Statistics, University of Ottawa,
Ottawa, Ontario K1N 6N5 Canada, and CNRS, Department of Mathematics,
Universit\'e Claude Bernard, Lyon 1, Lyon, France}

\email{bcollins@uottawa.edu}

\address{KD: Department of Mathematics, Texas A\&M University, College Station,
TX 77843--3368, USA}

\email{kdykema@math.tamu.edu}

\address{WSL: School of Mathematics, Georgia Institute of Technology, Atlanta,
GA 30332-1060, USA}

\email{li@math.gatech.edu}

\address{DT: Simion Stoilow Institute of Mathematics of the Romanian Academy,
PO Box 1-764, Bucharest 014700, Romania}

\email{Dan.Timotin@imar.ro}

\begin{abstract}
It is known that the eigenvalues of selfadjoint elements $a,b,c$
with $a+b+c=0$ in the factor $\mathcal{R}^{\omega}$ are characterized
by a system of inequalities analogous to the classical Horn inequalities
of linear algebra. We prove that these inequalities are in fact true
for elements of an arbitrary finite factor. In particular, if $x,y,z$
are selfadjoint elements of such a factor and $x+y+z=0$, then there
exist selfadjoint $a,b,c\in\mathcal{R}^{\omega}$ such that $a+b+c=0$
and $a$ (respectively, $b,c$) has the same eigenvalues as $x$ (respectively,
$y,z$). A (`complete') matricial form of this result is known to
imply an affirmative answer to an embedding question formulated by
Connes.

The main difficulty in our argument is the proof that certain generalized
Schubert cells (consisting of projections of fixed trace) have nonempty
intersection. In finite dimensions, this follows from classical intersection
theory. Our approach is to exhibit an actual element in this intersection,
given by a formula which applies generically regardless of the algebra
(or of the dimension of the underlying space). This argument requires
a good understanding of the combinatorial structure of honeycombs,
and it seems to be new even in finite dimensions.
\end{abstract}
\maketitle

\section*{Introduction}

Assume that $A,B,C$ are complex selfadjoint $n\times n$ matrices,
and $A+B+C=0$. A. Horn proposed in \cite{horn} the question of characterizing
the possible eigenvalues of these matrices, and indeed he conjectured
an answer which was eventually proved correct due to efforts of A.
Klyachko \cite{klyach} and A. Knutson and T. Tao \cite{KT}. To explain
this characterization, list the eigenvalues of $A$, repeated according
to multiplicity, in nonincreasing order\[
\lambda_{A}(1)\ge\lambda_{A}(2)\ge\cdots\ge\lambda_{A}(n),\]
choose an orthonormal basis $x_{j}\in\mathbb{C}^{n}$ such that $Ax_{j}=\lambda_{A}(j)x_{j}$,
and denote by $E_{A}(j)$ the space generated by $\{ x_{1},x_{2},\dots,x_{j}\}$.
Horn's conjecture involves, in addition to the trace identity\[
\sum_{j=1}^{n}(\lambda_{A}(j)+\lambda_{B}(j)+\lambda_{C}(j))=0,\]
a collection of inequalities of the form\[
\sum_{i\in I}\lambda_{A}(i)+\sum_{j\in J}\lambda_{B}(j)+\sum_{k\in K}\lambda_{C}(k)\le0,\]
where $I,J,K\subset\{1,2,\dots,n\}$ are sets with equal cardinalities.
One way to prove such inequalities is to observe that\[
\text{Tr}(PAP+PBP+PCP)=0\]
for any orthogonal projection $P$, and to find a projection $P$
such that\[
\text{Tr}(PAP)\ge\sum_{i\in I}\lambda_{A}(i),\quad\text{Tr}(PBP)\ge\sum_{j\in J}\lambda_{B}(j),\quad\text{Tr}(PCP)\ge\sum_{k\in K}\lambda_{C}(k).\]
Now, if $I=\{ i_{1}<i_{2}<\cdots<i_{r}\}$, the first condition is
guaranteed provided that the range $M$ of $P$ has dimension $r$,
and\[
\dim(M\cap E_{A}(i_{\ell}))\ge\ell,\quad\ell=1,2,\dots,r.\]
 These conditions describe the Schubert variety $S(E_{A},I)$ determined
by the flag $\{ E_{A}(\ell)\}_{\ell=1}^{n}$ and the set $I$. Thus,
such a projection can be found provided that\[
S(E_{A},I)\cap S(E_{B},J)\cap S(E_{C},K)\ne\varnothing.\]
 Klyachko \cite{klyach} proved that the collection of all inequalities
obtained this way is sufficient to answer Horn's question, and observed
that Horn's conjectured answer would also be proved if a certain `saturation
conjecture' were true. This conjecture was proved by Knutson and Tao
\cite{KT}. (See also \cite{KTW} for a direct proof of Horn's conjecture,
and \cite{ful-bull} for a very good survey of the history of the
problem and its ramifications. Some earlier expositions are in \cite{day-tho,ful-bourb}.)

There are several infinite-dimensional analogues of the Horn problem.
One can for instance consider compact selfadjoint operators $A,B,C$
on a Hilbert space and their eigenvalues. This analogue was considered
by several authors \cite{friedland,ful-linalg}, and a complete solution
can be found in \cite{ber-li-smo} for operators such that $A,B$,
and $-C$ are positive, and \cite{BLT} for the general case. Without
going into detail, let us say that these solutions are based on an
understanding of the behavior of the Horn inequalities as the dimension
of the space tends to infinity. The analogue we are interested in
here replaces the algebra $M_{n}(\mathbb{C})$ of $n\times n$ matrices
by a finite factor. This is simply a selfadjoint algebra $\mathcal{A}$
of operators on a complex Hilbert space $H$ such that $\mathcal{A}'\cap\mathcal{A}=\mathbb{C}1_{H}$
(where $\mathcal{A}'=\{ T:AT=TA\text{ for all }A\in\mathcal{A}\}$),
$\mathcal{A}''=\mathcal{A}$, and for which there exists a linear
functional $\tau:\mathcal{A}\to\mathbb{C}$ such that $\tau(X^{*}X)=\tau(XX^{*})>0$
for all $X\in\mathcal{A}\setminus\{0\}$. The algebras $M_{n}(\mathbb{C})$
are finite factors. When $\mathcal{A}$ is an infinite dimensional
finite factor, it is called a factor of type II$_{1}$. A complete
flag in a II$_{1}$ factor $\mathcal{A}$ is a family of orthogonal
projections $\{ E(t):0\le t\le\tau(1_{H})\}$ such that $\tau(E(t))=t$,
and $E(t)\le E(s)$ for $t\le s$. For any selfadjoint operator $A\in\mathcal{A}$
there exist a nonincreasing function $\lambda_{A}:[0,\tau(1_{H})]\to\mathbb{R}$,
and a complete flag $\{ E_{A}(t):0\le t\le\tau(1_{H})\}$ such that\[
A=\int_{0}^{\tau(1_{H})}\lambda_{A}(t)\, dE_{A}(t).\]
This is basically a restatement of the spectral theorem. The function
$\lambda_{A}$ is uniquely determined at its points of continuity,
but the space $E_{A}(t)$ is not uniquely determined on the open intervals
where $\lambda_{A}$ is constant. Note that\[
\tau(A)=\int_{0}^{\tau(1_{H})}\lambda_{A}(t)\, dt,\]
and therefore we have a trace identity\[
\int_{0}^{\tau(1_{H})}(\lambda_{A}(t)+\lambda_{B}(t)+\lambda_{C}(t))\, dt=0\]
whenever $A+B+C=0$. 

Many factors of type II$_{1}$ can be approximated in a weak sense
by matrix algebras. These are the factors that embed in the ultrapower
$\mathcal{R}^{\omega}$ of the hyperfinite II$_{1}$ factor $\mathcal{R}$.
For elements in such factors one can easily prove analogues of Horn's
inequalities. More precisely, assume that $\lambda:[0,T]\to\mathbb{R}$
is a nonincreasing function. The sequence\[
\lambda^{(n)}(1)\ge\lambda^{(n)}(2)\ge\cdots\ge\lambda^{(n)}(n)\]
is defined by $\lambda^{(n)}(j)=\int_{(j-1)T/n}^{jT/n}\lambda(t)\, dt$.
The following result was proved in \cite{Ber-Li-Romega}. We use the
normalization $\tau(1_{H})=1$ for the factor $\mathcal{R}^{\omega}$.

\begin{thm}
\label{th:hyperfinite-characterization}Let $\alpha,\beta,\gamma:[0,1]\to\mathbb{R}$
be nonincreasing functions. The following are equivalent:
\begin{enumerate}
\item There exist selfadjoint operators $A,B,C\in\mathcal{R}^{\omega}$
such that $\lambda_{A}=\alpha,\lambda_{B}=\beta,$ $\lambda_{C}=\gamma,$
and $A+B+C=0$. 
\item For every integer $n\ge1$, there exist matrices $A_{n},B_{n},C_{n}\in M_{n}(\mathbb{C})$
such that $\lambda_{A_{n}}=\alpha^{(n)},\lambda_{B_{n}}=\beta^{(n)},$
$\lambda_{C_{n}}=\gamma^{(n)},$ and $A_{n}+B_{n}+C_{n}=0$.
\end{enumerate}
\end{thm}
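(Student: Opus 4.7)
My plan is to handle the two implications separately. The direction (2) $\Rightarrow$ (1) is a direct ultrapower construction, while (1) $\Rightarrow$ (2) reduces to the classical Horn/Klyachko theorem via a Ky Fan type projection argument inside $\mathcal{R}^\omega$.

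For (2) $\Rightarrow$ (1), I would fix, for each $n$, a unital trace-preserving embedding $\iota_n : M_n(\mathbb{C}) \hookrightarrow \mathcal{R}$, which exists because $\mathcal{R}$ is hyperfinite and contains matrix subfactors of every size. Given the matrix triples $(A_n, B_n, C_n)$, define $A = [\iota_n(A_n)]_\omega \in \mathcal{R}^\omega$ and likewise $B, C$. Linearity of the ultrapower quotient gives $A + B + C = 0$. The distribution of $\iota_n(A_n)$ is determined by $\alpha^{(n)}$, viewed as a step function on $[0,1]$; these step functions converge to $\alpha$ pointwise and in $L^1$ by the martingale convergence theorem (they are the conditional expectations of $\alpha$ with respect to the partition of $[0,1]$ into equal pieces of length $1/n$), so the $\omega$-limit of the distributions coincides with the distribution prescribed by $\alpha$, and since $\alpha$ is nonincreasing, $\lambda_A = \alpha$.

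For (1) $\Rightarrow$ (2), fix $n$. By Klyachko's theorem, existence of the desired matrices amounts to the trace identity, immediate from integrating $\alpha + \beta + \gamma = 0$, together with the Horn inequalities
\[
\sum_{i \in I} \alpha^{(n)}(i) + \sum_{j \in J} \beta^{(n)}(j) + \sum_{k \in K} \gamma^{(n)}(k) \le 0
\]
for every Horn-admissible triple $(I, J, K) \subset \{1, \ldots, n\}$. Writing $E_I = \bigcup_{i \in I}[(i-1)/n, i/n]$, and similarly for $J, K$, the left-hand side becomes $\int_{E_I}\alpha + \int_{E_J}\beta + \int_{E_K}\gamma$. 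Since $\tau(P(A+B+C)) = 0$ for every projection $P \in \mathcal{R}^\omega$, the inequality would follow once I exhibit a projection $P$ of trace $|I|/n$ satisfying the Ky Fan type lower bounds $\tau(PA) \ge \int_{E_I}\alpha$, $\tau(PB) \ge \int_{E_J}\beta$, $\tau(PC) \ge \int_{E_K}\gamma$; equivalently, $P$ must belong to a triple intersection of generalized Schubert varieties attached to the spectral flags of $A, B, C$ and to $(I, J, K)$.

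The main obstacle is the nonemptiness of this triple intersection in $\mathcal{R}^\omega$. My approach would be to lift the problem to the matrix level: represent $A, B, C$ as $[a_k], [b_k], [c_k]$ with $a_k, b_k, c_k \in \mathcal{R}$ and $\|a_k + b_k + c_k\|_2 \to 0$ along $\omega$, and approximate each by an element of a matrix subfactor $M_{m_k} \subset \mathcal{R}$ with $m_k = N_k n$. The admissible triple $(I, J, K)$ lifts to a refined admissible triple in $\{1, \ldots, m_k\}$ by replicating each chosen index across its $N_k$-block, and Horn-admissibility is preserved under this replication. Classical Schubert calculus in $M_{m_k}$ then produces a matrix projection in the refined triple intersection, and its $\omega$-limit furnishes the desired $P \in \mathcal{R}^\omega$. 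The subtle points here---preservation of Horn-admissibility under block replication, and uniform trace control through the ultrafilter limit---both reduce to standard additivity properties of Schubert data, after which Klyachko's theorem supplies the matrices $(A_n, B_n, C_n)$.
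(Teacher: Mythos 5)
The paper does not prove this theorem: it is quoted as a known result from \cite{Ber-Li-Romega}, so there is no internal proof to compare against. Your proposal is, in substance, the expected argument and I see no fatal flaw in it. For $(2)\Rightarrow(1)$ the ultrapower construction is correct; note only that the partitions of $[0,1]$ into $n$ equal pieces are not nested in $n$, so the martingale convergence theorem is not the right citation --- for a bounded nonincreasing $\alpha$ the convergence of the averaged step functions in $L^1$ and at continuity points is elementary, and that is all you need to identify the distribution of $[\iota_n(A_n)]_\omega$ with that of $\alpha$. For $(1)\Rightarrow(2)$, two slips/gaps deserve attention. First, the trace identity does not come from ``integrating $\alpha+\beta+\gamma=0$'': this sum need not vanish pointwise; what you actually use is $\int_0^1(\alpha+\beta+\gamma)=\tau(A+B+C)=0$, which is all that is required. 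Second, the two steps you defer to ``standard additivity properties'' are precisely where the work lies, and they should be argued: (a) that block replication preserves $c_{IJK}>0$ --- this is true and has a one-line proof via the Horn--Klyachko equivalence (if $X+Y+Z=2(n-r)1$ realizes $(I,J,K)$, then $NX\otimes 1_N$, $NY\otimes 1_N$, $NZ\otimes 1_N$ realize the replicated triple), or by stretching honeycombs; (b) the passage through the ultrafilter, where the key observations are that $[P_k\wedge E_k]_\omega\le P\wedge E$, so the Schubert trace conditions can only improve in the limit, and that the eigenvalue functions of the matricial representatives converge along $\omega$ to $\alpha,\beta,\gamma$, so the Ky Fan lower bounds survive. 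Finally, your detour through an actual projection $P\in\mathcal{R}^\omega$ is unnecessary: once you have the finite-dimensional projections $P_k$ in the refined Schubert intersections for the matrix approximants $a_k,b_k,c_k$ with $\|a_k+b_k+c_k\|_2\to0$, the finite-dimensional trace inequality $\mathrm{tr}(P_k a_kP_k)\ge\sum_{i\in\tilde I}\lambda_{a_k}(i)$ and its analogues give the Horn inequality for $\alpha^{(n)},\beta^{(n)},\gamma^{(n)}$ directly in the limit, after which Klyachko's sufficiency theorem produces $A_n,B_n,C_n$; this streamlines the argument without changing its substance.
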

Note that condition (2) above requires, in addition to the trace identity,
an infinite (and infinitely redundant) collection of Horn inequalities.
We will show that these inequalities are in fact satisfied in any
factor of type II$_{1}$.

\begin{thm}
\label{th:characterization-in-any-factor}Given a factor $\mathcal{A}$
of type II$_{1}$, selfadjoint elements $A,B,C\in\mathcal{A}$ such
that $A+B+C=0$, and an integer $n\ge1$, there exist matrices $A_{n},B_{n},C_{n}\in M_{n}(\mathbb{C})$
such that $\lambda_{A_{n}}=\lambda_{A}^{(n)},\lambda_{B_{n}}=\lambda_{B}^{(n)},$
$\lambda_{C_{n}}=\lambda_{C}^{(n)}$, and $A_{n}+B_{n}+C_{n}=0$.
\end{thm}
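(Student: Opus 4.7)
The plan is to reduce Theorem \ref{th:characterization-in-any-factor} to condition (2) of Theorem \ref{th:hyperfinite-characterization} by showing that the discretized triple $(\lambda_{A}^{(n)},\lambda_{B}^{(n)},\lambda_{C}^{(n)})$ satisfies the trace identity together with every $n\times n$ Horn inequality. The trace identity follows at once from $\tau(A+B+C)=0$ and the definition of $\lambda^{(n)}$. By Klyachko's theorem, combined with the Knutson--Tao saturation theorem, these Horn inequalities suffice to produce a matrix triple $A_{n},B_{n},C_{n}$ with the prescribed spectra and $A_{n}+B_{n}+C_{n}=0$, so the entire task reduces to deriving them from inequalities that hold inside $\mathcal{A}$.

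For a single Horn triple $(I,J,K)$ with $|I|=|J|=|K|=r$, I would mimic the ``one projection'' argument recalled in the Introduction. Concretely, I would produce a projection $P\in\mathcal{A}$ with $\tau(P)=r/n$ lying simultaneously in three trace-weighted Schubert varieties attached to the flags $\{E_{A}(t)\}$, $\{E_{B}(t)\}$, $\{E_{C}(t)\}$, in the sense that
\[
\tau(P\wedge E_{A}(i_{\ell}/n))\ge\ell/n,\quad \ell=1,\dots,r,
\]
and similarly for $(J,E_{B})$ and $(K,E_{C})$. Granting such a $P$, a Courant--Fischer type estimate available in any finite factor gives $\tau(PAP)\ge \frac{1}{n}\sum_{i\in I}\lambda_{A}^{(n)}(i)$ along with its analogues for $B$ and $C$; summing these and using $\tau(P(A+B+C)P)=0$ then delivers the desired Horn inequality.

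The crux is therefore the construction of $P$. In the matrix setting this $P$ is a point in a triple intersection of Schubert varieties whose nonemptiness is guaranteed by the nonvanishing of a Littlewood--Richardson coefficient, but no comparable cohomological machinery is available in an arbitrary II$_{1}$ factor. Following the strategy announced in the abstract, I would exhibit $P$ by an explicit formula. The combinatorial input would be a honeycomb with boundary data prescribed by $(I,J,K)$; the vertices and edges of the honeycomb would dictate how to assemble $P$ from suitable pieces of the spectral projections of $A$, $B$, and $C$ so that all three families of trace-dimension conditions hold simultaneously. The main obstacle will be verifying that the assembled operator is a genuine projection (not merely a positive contraction) of the correct trace; this is the technical heart of the paper, and requires a delicate analysis of honeycomb combinatorics that appears to be new even in finite dimensions.

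Once the Horn inequalities are established for all Horn triples $(I,J,K)$, invoking condition (2) of Theorem \ref{th:hyperfinite-characterization} supplies the required matrices $A_{n},B_{n},C_{n}$ and completes the argument.
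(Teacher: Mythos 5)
Your overall reduction is the same as the paper's: the trace identity plus Horn inequalities for the discretized spectra, combined with the Klyachko/Knutson--Tao solution of Horn's conjecture, yield the matrices $A_{n},B_{n},C_{n}$; and each individual inequality is obtained from a projection $P$ in a triple intersection of Schubert varieties via $\tau(P(A+B+C)P)=0$ and a Ky Fan--type estimate (this last implication is exactly the argument of \cite{Ber-Li-Romega}). The genuine gap is in which inequalities you propose to prove this way: you plan to construct $P$ for \emph{every} Horn triple, i.e.\ for every $(I,J,K)$ with $c_{IJK}>0$. The paper does not, and cannot, do this. It first invokes Belkale's theorem \cite{belkale} that the inequalities with $c_{IJK}=1$ already imply all the others, and then solves the intersection problem only in that case (Theorem \ref{th:intersections-not-generic}, via rigid measures). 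This restriction is essential, not cosmetic: Section 7 shows that already the simplest intersection problem with $c_{IJK}>1$ (namely $I=J=K=\{2,4,6\}$, $n=6$) is equivalent to the invariant subspace problem relative to a II$_{1}$ factor, which is open. So without the reduction to $c_{IJK}=1$, the step ``construct $P$ for each Horn triple'' would fail for some triples.

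Second, even after restricting to $c_{IJK}=1$, the construction of $P$ --- which is the entire content of Theorems \ref{th:intersections-not-generic} and \ref{th:intersections-generic-case} and of Sections 1--5 --- is only gestured at, and the obstacle you anticipate is mislocated. The paper's $P$ is not assembled from ``pieces'' of spectral projections with a positivity or idempotence issue to check: it is a lattice polynomial (in $\wedge$ and $\vee$) applied to the projections of the three flags, hence automatically a projection. The real difficulty is combinatorial and inductive: decomposing a rigid measure uniquely into extremal measures (skeletons), ordering them by the relation $\prec$, showing that this ordering produces clockwise overlays, and reducing via the duality $m\mapsto m^{*}$ to measures with a single attachment point, for which the intersection is trivially a flag projection; the generic trace identities are then handled by Proposition \ref{pro:Zariski}. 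As written, your argument proves Theorem \ref{th:characterization-in-any-factor} only modulo an unproved intersection statement, and --- because Belkale's reduction is missing --- modulo a version of it (allowing $c_{IJK}>1$) that is strictly stronger than what the paper establishes.
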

The proof of the relevant inequalities relies, as in finite dimensions,
on finding projections with prescribed intersection properties. In
order to state our main result in this direction we need a more precise
description of the Horn inequalities. Assume that the subsets $I=\{ i_{1}<i_{2}<\cdots<i_{r}\}$,
$J=\{ j_{1}<j_{2}<\cdots<j_{r}\}$, and $K=\{ k_{1}<k_{2}<\cdots<k_{r}\}$
of $\{1,2,\dots,n\}$ satisfy the identity\[
\sum_{\ell=1}^{r}[(i_{\ell}-\ell)+(j_{\ell}-\ell)+(k_{\ell}-\ell)]=2r(n-r).\]
One associates to these sets a nonnegative integer $c_{IJK}$, called
the Littlewood-Richardson coefficient. The sets $I,J,K$ yield an
eigenvalue inequality in Horn's conjecture if $c_{IJK}\ne0$. Moreover,
as shown by P. Belkale \cite{belkale}, the inequalities corresponding
with $c_{IJK}>1$ are in fact redundant. Thus, the preceding theorem
follows from the next result.

\begin{thm}
\label{th:the-Horn-inequalities-in-any-A}Given a factor $\mathcal{A}$
of type II$_{1}$, selfadjoint elements $A,B,C\in\mathcal{A}$ such
that $A+B+C=0$, an integer $n\ge1$, and sets $I,J,K\subset\{1,2,\dots,n\}$
such that $c_{IJK}=1$, we have\[
\sum_{i\in I}\lambda_{A}^{(n)}(i)+\sum_{j\in J}\lambda_{B}^{(n)}(j)+\sum_{k\in K}\lambda_{C}^{(n)}(k)\le0.\]

\end{thm}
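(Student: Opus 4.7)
The plan is to reduce Theorem~\ref{th:the-Horn-inequalities-in-any-A} to an intersection statement for generalized Schubert varieties in $\mathcal{A}$, and then to prove that intersection statement by constructing an explicit projection using honeycomb combinatorics. Given the spectral resolutions of $A$, $B$, $C$, I would fix flags $\{E_A(t)\}$, $\{E_B(t)\}$, $\{E_C(t)\}$ with $\tau(E_A(t))=t$, etc., and, for $I=\{i_1<\cdots<i_r\}\subset\{1,\dots,n\}$, define the generalized Schubert variety $S(E_A,I)$ to be the set of projections $P\in\mathcal{A}$ with $\tau(P)=r/n$ satisfying $\tau(P\wedge E_A(i_\ell/n))\ge \ell/n$ for $\ell=1,\dots,r$. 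A routine spectral-theorem calculation (essentially integration by parts against the nondecreasing function $t\mapsto\tau(PE_A(t))$) will show that any $P\in S(E_A,I)$ satisfies $n\tau(PAP)\ge\sum_{i\in I}\lambda_A^{(n)}(i)$, and similarly for $B,J$ and $C,K$. Since $A+B+C=0$ forces $\tau(PAP+PBP+PCP)=0$, the inequality in the theorem follows at once from the existence of a single projection in $S(E_A,I)\cap S(E_B,J)\cap S(E_C,K)$.

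The heart of the proof is therefore to establish that, under the hypothesis $c_{IJK}=1$, this triple intersection is nonempty for every choice of flags in every II$_1$ factor $\mathcal{A}$. In finite dimensions this is part of classical intersection theory: the corresponding intersection in the Grassmannian $\mathrm{Gr}(r,n)$ is generically a collection of $c_{IJK}$ points, so nonemptiness is automatic whenever $c_{IJK}\ne 0$. In a II$_1$ factor no underlying algebraic variety is available, so cohomological arguments and generic-position reductions are out of reach. Instead, the plan is to exhibit a projection in the intersection by a closed-form recipe that depends only on the combinatorial data $(I,J,K)$ and is applied uniformly to the three flags. The rigidity hypothesis $c_{IJK}=1$ should guarantee that the Knutson--Tao honeycomb representing $(I,J,K)$ is essentially unique; its local structure at each vertex will then dictate a prescription for building a small subprojection out of meets and joins of the flag elements $E_A(s)$, $E_B(t)$, $E_C(u)$, and the desired $P$ is an orthogonal sum of these pieces. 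The honeycomb balance conditions should match exactly the trace constraints required by the three Schubert conditions and by $\tau(P)=r/n$.

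The principal obstacle I anticipate lies in this last step: the flags $E_A$, $E_B$, $E_C$ need not be in any generic position, so pairwise traces such as $\tau(E_A(s)\wedge E_B(t))$ can strictly exceed the values they would take in a free or generic situation. A naive transcription of any finite-dimensional recipe is therefore liable to produce pieces that are not pairwise orthogonal, or whose traces overshoot. The combinatorial rigidity enforced by $c_{IJK}=1$ is what I expect will save the construction: it should force enough cancellation in the formula to keep it well-defined in the face of arbitrary degenerations of the flags, and it is presumably also what makes the corresponding finite-dimensional intersection transverse. Verifying this robustness, and tracking the trace accounting across the honeycomb to confirm the Schubert inequalities in full generality, should absorb the bulk of the technical effort; once the intersection statement is established, Theorem~\ref{th:the-Horn-inequalities-in-any-A} is immediate from the reduction above.
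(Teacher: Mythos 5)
Your first step is sound and matches the paper's route: once a projection $P$ lies in $S(E_A,I)\cap S(E_B,J)\cap S(E_C,K)$, the estimate $\tau(PAP)\ge\frac1n\sum_{i\in I}\lambda_A^{(n)}(i)$ (and its analogues) together with $\tau(PAP+PBP+PCP)=0$ gives the inequality; the paper likewise derives Theorem \ref{th:the-Horn-inequalities-in-any-A} from the intersection statement, Theorem \ref{th:intersections-not-generic} (this implication is taken from an earlier paper of Bercovici--Li).

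The genuine gap is in the only hard part, the proof of Theorem \ref{th:intersections-not-generic} itself. You propose to read off, from the (unique) honeycomb attached to $(I,J,K)$, a local prescription at each vertex producing subprojections from meets and joins of flag elements, and to take $P$ as an orthogonal sum of these pieces, hoping that rigidity forces enough ``cancellation'' to survive arbitrary (non-generic) flags. That is precisely where a direct transcription breaks down, and the hope is not substantiated: for arbitrary flags the pieces need not be orthogonal and their traces need not add correctly, and $c_{IJK}=1$ does not by itself repair a one-shot formula. What the paper actually uses is an inductive reduction with a quite different mechanism: the rigid measure $m$ is decomposed (uniquely) as a sum of extremal measures supported on skeletons; a precedence relation $\prec$ among these skeletons is shown to be acyclic, so a minimal skeleton can be treated first; Proposition \ref{prop:reduction-of-AB(m)} then compresses the problem to the corner $P_1\mathcal{A}P_1$ cut out by the projection solving the first (stretched) subproblem, reducing $A(m)$ to $A(\widetilde{\mu_1})$ and $A(m')$ with strictly smaller complexity $\kappa$; extremal measures are handled by passing to the dual measure $m^*$ (i.e.\ replacing projections by their complements), and the induction terminates because a rigid extremal measure whose dual is also extremal has a single attachment point, where the intersection trivially contains a flag projection (genericity, via Proposition \ref{pro:Zariski}, is only needed for the lattice-polynomial statement, Theorem \ref{th:intersections-generic-case}, not for nonemptiness). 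None of this structure --- the skeleton decomposition, the clockwise-overlay/stretching step, the compression to a subfactor, the duality endgame --- is present in your sketch, so the central claim of the theorem remains unproved as written; to complete the argument you would need to supply this (or an equivalent) reduction rather than a vertex-by-vertex orthogonal-sum construction.
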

This result follows from the existence of projections satisfying specific
intersection requirements. Before stating our result in this direction,
we need to specify a notion of genericity. Fix a finite factor $\mathcal{A}$
with trace normalized so that $\tau(1)=n$. We will deal with flags
of projections with integer dimensions, i.e., with collections\[
\mathcal{E}=\{0=E_{0}<E_{1}<\cdots<E_{n}=1\}\]
of orthogonal projections in $\mathcal{A}$ such that $\tau(E_{j})=j$
for all $j$. Given such a flag and a unitary operator $U\in\mathcal{A}$,
the projections $U\mathcal{E}U^{*}=\{ UE_{j}U^{*}:0\le j\le n\}$
form another flag. In fact, all flags with integer dimensions are
obtained this way. A statement about a collection of three flags $\{\mathcal{E},\mathcal{F},\mathcal{G}\}$
will be said to hold generically (or for generic flags) if it holds
for the flags $\{ U\mathcal{E}U^{*},V\mathcal{E}V^{*},W\mathcal{E}W^{*}\}$
with $(U,V,W)$ in a norm-dense open subset of $\mathcal{U}(\mathcal{A})^{3}$
(where $\mathcal{U}(\mathcal{A})$ denotes the group of unitaries
in $\mathcal{A}$). In finite dimensions, this set of unitaries can
usually be taken to be Zariski open.

Note that flags with integer dimensions always exist if $\mathcal{A}$
is of type II$_{1}$. If $\mathcal{A}=M_{m}(\mathbb{C})$, such flags
only exist when $n$ divides $m$.

One final piece of notation. Given variables $\{ e_{j},f_{j},g_{j}:1\le j\le n\}$,
we consider the free lattice $L=L(\{ e_{j},f_{j},g_{j}:1\le j\le n\})$.
This is simply the smallest collection which contains the given variables,
and has the property that, given $p,q\in L$, the expressions $(p)\wedge(q)$
and $(p)\vee(q)$ also belong to $L$. We refer to the elements of
$L$ as \emph{lattice polynomials}. If $p$ is a lattice polynomial
and $\{ E_{j},F_{j},G_{j}:1\le j\le n\}$ is a collection of orthogonal
projections in a factor $\mathcal{A}$, we can substitute projections
for the variables of $p$ to obtain a new projection $p(\{ E_{j},F_{j},G_{j}:1\le j\le n\})$.
The lattice operations are interpreted as usual: $P\vee Q$ is the
projection onto the closed linear span of the ranges of $P$ and $Q$,
and $P\wedge Q$ is the projection onto the intersection of the ranges
of $P$ and $Q$. Note that we did not impose any algebraic relations
on $L$. When we work with flags, we can always reduce lattice polynomials
using the relations $e_{j}\wedge e_{k}=e_{\min\{ j,k\}}$ and $e_{j}\vee e_{k}=e_{\max\{ j,k\}}$.
Further manipulations are possible because the lattice of projections
in a finite factor is modular, i.e. $(P\vee Q)\wedge R=P\vee(Q\wedge R)$
provided that $P\le R$.

As in finite dimensions, the Horn inequalities follow from the intersection
result below. Given a flag $\mathcal{E}=(E_{j})_{j=0}^{n}\subset\mathcal{A}$
such that $\tau(E_{j})=j$, and a set $I=\{ i_{1}<i_{2}<\cdots<i_{r}\}\subset\{1,2,\dots,n\}$,
we denote by $S(\mathcal{E},I)$ the collection of projections $P\in\mathcal{A}$
satisfying $\tau(P)=r$ and\[
\tau(P\wedge E_{i_{\ell}})\ge\ell,\quad\ell=1,2,\dots,r.\]

\begin{thm}
\label{th:intersections-not-generic}Given subsets $I,J,K\subset\{1,2,\dots,n\}$
with cardinality $r$, and with the property that $c_{IJK}=1$, a
finite factor $\mathcal{A}$ with $\tau(1)=n$, and arbitrary flags
$\mathcal{E}=(E_{j})_{j=0}^{n}$, $\mathcal{F}=(F_{j})_{j=0}^{n},$
$\mathcal{G}=(G_{j})_{j=0}^{n}$ such that $\tau(E_{j})=\tau(F_{j})=\tau(G_{j})=j$,
the intersection\[
S(\mathcal{E},I)\cap S(\mathcal{F},J)\cap S(\mathcal{G},K)\]
is not empty.
\end{thm}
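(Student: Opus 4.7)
The plan is to exhibit a concrete projection in the intersection, given by a lattice polynomial $p$ in the flag projections $\{E_{j},F_{j},G_{j}\}$. The combinatorial data driving the choice of $p$ is supplied by the hypothesis $c_{IJK}=1$: in the finite-dimensional Schubert calculus this forces the existence of a unique honeycomb with boundary data determined by $(I,J,K)$, and the structure of this honeycomb is intended to serve as a blueprint for a formula that makes sense in any finite factor.

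First I would treat the generic case, in which the three flags are in sufficiently general position. From the unique honeycomb with boundary $(I,J,K)$ one extracts a lattice polynomial $p$ whose building blocks correspond to the constant regions of the honeycomb, with meets and joins reflecting the adjacency of these regions. The claim is then that, generically, the projection $P:=p(\mathcal{E},\mathcal{F},\mathcal{G})$ satisfies $\tau(P)=r$ together with $\tau(P\wedge E_{i_{\ell}})\ge\ell$, and the analogous inequalities for the $F$- and $G$-flags. Verifying the trace is the crux. The approach is to compute traces inductively via repeated use of the modular identity $(Q\vee R)\wedge S=Q\vee(R\wedge S)$ when $Q\le S$ and the inclusion--exclusion formula $\tau(Q\vee R)=\tau(Q)+\tau(R)-\tau(Q\wedge R)$, traversing the honeycomb so that the traces of the intermediate subprojections are forced to equal the integer edge-labels of the honeycomb. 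The rigidity imposed by $c_{IJK}=1$ is what guarantees this traversal is unambiguous and that no cancellations spoil the count.

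Next I would extend the conclusion to arbitrary flags by approximation. Since $p$ involves only finitely many meets and joins, the Schubert-cell conditions $\tau(P\wedge E_{i_{\ell}})\ge\ell$ are preserved under strong-operator limits, and the generic triples $\{U\mathcal{E}U^{*},V\mathcal{F}V^{*},W\mathcal{G}W^{*}\}$ are norm-dense in the space of flag triples. A limit of projections produced for generic perturbations therefore satisfies the intersection inequalities for the original flags; if this limit has trace strictly less than $r$, one enlarges it by joining with a suitable subprojection of $1-P$ of the appropriate trace, which cannot violate any of the lower bounds already secured.

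The principal difficulty I expect is the construction of $p$ from the honeycomb and the verification of its trace in the generic case. Honeycombs carry rich combinatorial data (constant regions, rhombi, gentle loops), and one must choose an order of operations for which the modular law is applicable at every step and for which the bookkeeping of traces matches the integer content of the honeycomb's edges exactly. Classical Schubert intersection theory gives only abstract non-emptiness in the matrix case, whereas the present argument must produce an explicit element by a formula that does not see the dimension of the ambient space; this dimension-free character of the formula is precisely what allows the passage from $M_{n}(\mathbb{C})$ and $\mathcal{R}^{\omega}$ to an arbitrary type II$_{1}$ factor, and it is also why the argument is expected to be new even in finite dimensions.
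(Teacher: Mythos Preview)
Your proposal has the right overall ambition---produce an explicit lattice polynomial whose value lies in the triple Schubert intersection---but it misses the structural mechanism that actually drives the paper's argument, and the passage from generic to arbitrary flags has a genuine gap.

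On the construction of $p$: saying that the building blocks ``correspond to the constant regions of the honeycomb, with meets and joins reflecting adjacency'' is not a construction. The paper does not read $p$ off the honeycomb in one pass. Instead it works with the rigid measure $m\in\mathcal{M}_r$ attached to $(I,J,K)$ and proves Property~$A(m)$ by a double induction you do not mention. First, $m$ is written as $\sum_{\ell}\mu_\ell$ with each $\mu_\ell$ extremal, supported on a skeleton $S_\ell$; a partial order $\prec$ on skeletons (developed at length in Sections~3--4) determines which $\mu_\ell$ to peel off first. Proposition~\ref{prop:reduction-of-AB(m)} shows that $A(m)$ follows from $A(\widetilde{\mu_1})$ and $A(m')$ where $m'=\sum_{\ell\ge 2}\mu_\ell$, both of which have strictly smaller complexity $\kappa$. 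Second, when $m$ is itself extremal, one passes to the dual measure $m^*$ (corresponding to $P\mapsto P^\perp$, $I\mapsto I^*=\{n+1-i:i\notin I\}$); one proves that $m$ and $m^*$ cannot both be extremal unless $m$ has a single attachment point, in which case the solution is simply one of $E_r,F_r,G_r$. The modular law and trace identities you cite are used only locally at each step of this recursion; there is no global traversal of the honeycomb.

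On the approximation step: this is where the proposal breaks down. Lattice operations $\wedge,\vee$ are not norm- (or SOT-) continuous, so even if the perturbed flags converge to $(\mathcal{E},\mathcal{F},\mathcal{G})$ in norm, the projections $P_n=p(\text{perturbed flags})$ need not converge in SOT. You can extract only a weak-$*$ limit $P$, which is a positive contraction with $\tau(P)=r$ but typically not a projection. Passing to the range projection $P'$ one does recover $\tau(P'\wedge E_{i_\ell})\ge\ell$ (your semicontinuity intuition is essentially right here), but now $\tau(P')$ may be strictly larger than $r$, and trimming $P'$ down to trace $r$ can destroy the Schubert inequalities simultaneously for all three flags. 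Your remedy---enlarge when the trace is too small---addresses the wrong side of the difficulty.

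The paper sidesteps this entirely: Property~$A(m)$ (arbitrary flags) is proved directly by the same inductive reduction, not by approximation from $B(m)$. Given arbitrary flags, one applies $A(\widetilde{\mu_1})$ to get a projection $P_1$, restricts to the factor $P_1\mathcal{A}P_1$, builds new (arbitrary, not generic) flags there from the intersections $P_1\wedge E_{i}$, and applies $A(m')$ inside that smaller factor. No limits are taken at any stage.
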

For generic flags, more is true.

\begin{thm}
\label{th:intersections-generic-case}Given subsets $I,J,K\subset\{1,2,\dots,n\}$
with cardinality $r$, and with the property that $c_{IJK}=1$, there
exists a lattice polynomial $p\in L(\{ e_{j},f_{j},g_{j}:0\le j\le n\})$
with the following property: for any finite factor $\mathcal{A}$
with $\tau(1)=n$, and for generic flags $\mathcal{E}=(E_{j})_{j=0}^{n}$,
$\mathcal{F}=(F_{j})_{j=0}^{n},$ $\mathcal{G}=(G_{j})_{j=0}^{n}$
such that $\tau(E_{j})=\tau(F_{j})=\tau(G_{j})=j$, the projection
$P=p(\mathcal{E},\mathcal{F},\mathcal{G})$ has trace $\tau(P)=r$
and, in addition\[
\tau(P\wedge E_{i})=\tau(P\wedge F_{j})=\tau(P\wedge G_{k})=\ell\]
when $i_{\ell}\le i<i_{\ell+1},j_{\ell}\le j<k_{\ell+1},k_{\ell}\le k<k_{\ell+1}$
and $\ell=0,1,\dots,r,$ where $i_{0}=j_{0}=k_{0}=0$ and $i_{r+1}=j_{r+1}=k_{r+1}=n+1$.
\end{thm}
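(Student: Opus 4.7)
My plan is to construct $p$ by induction on $r=|I|=|J|=|K|$, exploiting the rigidity provided by the assumption $c_{IJK}=1$. Classically this hypothesis corresponds to the existence of a unique honeycomb with prescribed boundary, and the intersection $S(\mathcal{E},I)\cap S(\mathcal{F},J)\cap S(\mathcal{G},K)$ consists generically of a single projection; the goal is to produce this projection as a lattice polynomial applied to the three flags. For the base case $r=1$ one has $i_1+j_1+k_1=2n+1$, and $p=e_{i_1}\wedge f_{j_1}\wedge g_{k_1}$ suffices: for generic flags the triple meet $E_{i_1}\wedge F_{j_1}\wedge G_{k_1}$ has trace $\max(0,i_1+j_1+k_1-2n)=1$, and the required intersection identities follow automatically from the inclusions $P\le E_{i_1},F_{j_1},G_{k_1}$.

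\textbf{Inductive step.} For $r>1$, the conditions $\tau(P\wedge E_{i_1})=\tau(P\wedge F_{j_1})=\tau(P\wedge G_{k_1})=1$ suggest isolating a trace-$1$ summand of $P$ lying inside $E_{i_1}\wedge F_{j_1}\wedge G_{k_1}$. Using the modular law for the projection lattice of $\mathcal{A}$, I would seek a decomposition
\[
p=(e_{i_1}\wedge f_{j_1}\wedge g_{k_1})\vee p',
\]
where $p'$ is supplied inductively from a reduced triple $(I',J',K')$ with $|I'|=r-1$ and $c_{I'J'K'}=1$, applied to suitable sub-flags of $\mathcal{E},\mathcal{F},\mathcal{G}$. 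The reduced triple should be extracted from the unique honeycomb for $(I,J,K)$ by peeling a small region near a distinguished boundary vertex, and the key combinatorial claim is that such a reduction exists and preserves the rigidity hypothesis $c=1$. Care is needed in specifying the sub-flags for the induction (possibly after passing to a quotient corner of $\mathcal{A}$), and in composing the intersection identities through repeated applications of the modular law.

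\textbf{Verification and main obstacle.} Once $p$ is built, the required trace identities must be verified for generic $(U,V,W)\in\mathcal{U}(\mathcal{A})^3$. Here I would use that the functions $(U,V,W)\mapsto\tau(P\wedge E)$ and $(U,V,W)\mapsto\tau(P\vee E)$ are respectively upper- and lower-semicontinuous, so that any lattice polynomial in the flag projections attains its extremal trace values on a norm-dense open set. Comparison with the finite-dimensional case shows that these extremal values agree with the dimensions predicted by classical intersection theory, and the passage to an arbitrary finite factor uses only the modular identity $(P\vee Q)\wedge R=P\vee(Q\wedge R)$ for $P\le R$. In my view the main obstacle is the inductive combinatorial step: a naive peel -- say always removing the smallest index -- need not preserve $c_{IJK}=1$, and identifying the correct feature of the unique honeycomb, while proving that the resulting polynomial really parameterizes the unique intersection point rather than a proper sub- or super-projection, appears to require precisely the new honeycomb combinatorics advertised in the abstract.
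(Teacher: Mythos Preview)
Your inductive scheme has a genuine gap at the very first step of the induction, and it is not the combinatorial ``peeling preserves $c=1$'' issue you flag at the end; the proposed decomposition
\[
p=(e_{i_1}\wedge f_{j_1}\wedge g_{k_1})\vee p'
\]
already fails for small examples. The conditions $\tau(P\wedge E_{i_1})=\tau(P\wedge F_{j_1})=\tau(P\wedge G_{k_1})=1$ do \emph{not} imply that $P$ contains a trace-$1$ piece inside $E_{i_1}\wedge F_{j_1}\wedge G_{k_1}$: the three trace-$1$ projections $P\wedge E_{i_1}$, $P\wedge F_{j_1}$, $P\wedge G_{k_1}$ are in general pairwise distinct. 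Concretely, take $n=3$, $r=2$, $I=J=\{2,3\}$, $K=\{1,2\}$; one checks $c_{IJK}=1$ and the unique solution is $P=G_2$. Here $i_1+j_1+k_1=5<2n+1$, so generically $E_{i_1}\wedge F_{j_1}\wedge G_{k_1}=E_2\wedge F_2\wedge G_1=0$, and your first summand vanishes. The identity $i_1+j_1+k_1=2n+1$ that makes the base case $r=1$ work is a consequence of the trace condition for $r=1$ and does not persist for larger $r$.

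The paper's induction is organized quite differently. The induction parameter is not $r$ but a combinatorial invariant $\kappa(m)$ of the rigid measure $m$ associated to $(I,J,K)$ (the number of parallelogram pieces in the associated puzzle). The reduction step (Proposition~\ref{prop:reduction-of-AB(m)}) does not peel off a rank-$1$ piece; instead one writes $m=\mu_1+m'$ where $\mu_1$ is an \emph{extremal} summand minimal for the order $\prec$ introduced in Section~3, solves the (stretched) problem for $\mu_1$ to produce a projection $P_1$ of possibly large trace, and then solves the problem for $m'$ inside the corner $P_1\mathcal{A}P_1$. When $m$ is itself extremal this step is vacuous, and one passes instead to the dual $m^*$ (taking orthogonal complements), which is again rigid with the same $\kappa$; the crucial structural fact is that $m$ and $m^*$ cannot both be extremal unless $m$ has a single attachment point, in which case the solution is one of the flag projections $E_r$, $F_r$, or $G_r$. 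So the base case is not $r=1$ but ``one attachment point,'' and the recursion alternates between splitting off extremal pieces and dualizing, each step strictly lowering $\kappa$.
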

When $\mathcal{A}=M_{n}(\mathbb{C})$, the existence and generic uniqueness
of a projection $P$ satisfying the trace conditions in the statement
is well-known. In fact $c_{IJK}$ serves as an algebraic way to count
these projections. Our argument works equally well for linear subspaces
of $\mathbb{F}^{n}$ for any field $\mathbb{F}$ (except that orthogonal
complements $1-P$ must be replaced by annihilators in the dual).
The following result is generally false when $c_{IJK}>1$.

\begin{thm}
\label{thm:intersections-field-cse}Fix a field $\mathbb{F}$, and
complete flags $\mathcal{E}=(E_{j})_{j=0}^{n}$, $\mathcal{F}=(F_{j})_{j=0}^{n},$
$\mathcal{G}=(G_{j})_{j=0}^{n}$ of subspaces in $\mathbb{F}^{n}$.
Given subsets $I,J,K\subset\{1,2,\dots,n\}$ with cardinality $r$,
and with the property that $c_{IJK}=1$, there exists a subspace $M\subset\mathbb{F}^{n}$
such that $\dim M=r$, and\[
\dim(M\cap E_{i_{\ell}})\ge\ell,\quad\dim(M\cap F_{j_{\ell}})\ge\ell,\quad\dim(M\cap G_{k_{\ell}})\ge\ell\]
for $\ell=1,2,\dots,r$.
\end{thm}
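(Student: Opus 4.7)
The plan is to parallel the argument of Theorem~\ref{th:intersections-not-generic} (the factor analogue without genericity), replacing the operator-algebraic limiting input by the valuative criterion of properness for the Grassmannian. The input is the field analogue of Theorem~\ref{th:intersections-generic-case}: for generic triples of complete flags in $\mathbb{F}^n$, the lattice polynomial $p$ produces an $r$-dimensional subspace with the required intersection equalities. As the paper notes, the proof of Theorem~\ref{th:intersections-generic-case} carries over to subspaces over an arbitrary field once orthogonal complements are replaced by annihilators in the dual; I would take this generic version as established input.

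Given arbitrary flags $\mathcal{E},\mathcal{F},\mathcal{G}$ over $\mathbb{F}$, I would deform to generic flags over the function field $\mathbb{F}(t)$. Choose $U(t),V(t),W(t)\in\mathrm{GL}_n(\mathbb{F}[t])$ with $U(0)=V(0)=W(0)=I$ such that the generic-fibre triple $(U(t)\mathcal{E},V(t)\mathcal{F},W(t)\mathcal{G})$ lies in the Zariski-open genericity locus over $\mathbb{F}(t)$; an explicit choice of the form $U(t)=I+tX$, with $X$ a matrix of indeterminates adjoined to $\mathbb{F}$ if need be, works. The generic field version then supplies $M(t)\in G(r,n)(\mathbb{F}(t))$ of dimension $r$ obeying the intersection conditions against the perturbed flags. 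Since $G(r,n)$ is proper over $\mathbb{F}$, the valuative criterion applied to the DVR $\mathbb{F}[t]_{(t)}$ (fraction field $\mathbb{F}(t)$, residue field $\mathbb{F}$) extends $M(t)$ uniquely to an $\mathbb{F}[t]_{(t)}$-valued section, whose value at $t=0$ is an $\mathbb{F}$-rational $M_0\in G(r,n)(\mathbb{F})$. The conditions $\dim(M\cap UE_{i_\ell})\ge\ell$ cut out closed subschemes of $G(r,n)\times\mathrm{GL}_n$; the section $t\mapsto(M(t),U(t))$ lies in each such subscheme at the generic point, hence at $t=0$ as well, and since $U(0)=I$ this yields $\dim(M_0\cap E_{i_\ell})\ge\ell$ and symmetrically for $\mathcal{F},\mathcal{G}$.

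The main obstacle lies in the deformation step: if $\mathbb{F}$ is small (for instance finite), there may be no $\mathbb{F}$-rational choice of perturbing matrices pushing the triple into the genericity locus over $\mathbb{F}(t)$, and one must enlarge $\mathbb{F}$ by adjoining further transcendentals and then carefully track $\mathbb{F}$-rationality through the final specialization. Once this technicality is handled, the propagation from generic to arbitrary flags is routine algebraic geometry; all the combinatorial substance (the honeycomb construction producing $p$) lives in the generic version that is taken as input.
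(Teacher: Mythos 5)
Your route is genuinely different from the paper's, and it is worth being clear about that first. The paper does not obtain the arbitrary-flag field statement by specializing from generic flags: the reduction of Proposition \ref{prop:reduction-of-AB(m)} that establishes property $A(m)$ uses no genericity at all, and it transfers verbatim to subspaces of $\mathbb{F}^{n}$ once $1-P$ is replaced by annihilators in the dual --- the skeleton decomposition, the duality $m\leftrightarrow m^{*}$, and the one-attachment-point base case (where $M=E_{r}$ works for \emph{arbitrary} flags, since $\dim(E_{r}\cap F_{s+\ell})\ge r+(s+\ell)-n=\ell$) need no perturbation; genericity enters only in property $B(m)$, i.e.\ in the exact dimension counts of Theorem \ref{th:intersections-generic-case}. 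Your proposal instead takes the field analogue of Theorem \ref{th:intersections-generic-case} as input and deduces the arbitrary-flag statement by degeneration. Granting that input, the core of your argument is correct: $M(t)=p(U(t)\mathcal{E},V(t)\mathcal{F},W(t)\mathcal{G})$ is automatically $\mathbb{F}(t)$-rational because lattice operations preserve the field of definition, the valuative criterion for the proper $G(r,n)$ over $\mathbb{F}[t]_{(t)}$ produces an $\mathbb{F}$-rational limit $M_{0}$, and the Schubert conditions are closed, hence pass to the special fibre.

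The step you dismiss as a technicality is, however, exactly where the content of $c_{IJK}=1$ sits: the failure of the theorem for $c_{IJK}>1$ is a failure of $\mathbb{F}$-rationality (not of nonemptiness over $\overline{\mathbb{F}}$), so "adjoin transcendentals and track rationality through the final specialization" cannot be left unexplained --- specializing indeterminates back into a finite field runs into precisely the possibility that the good locus has no $\mathbb{F}$-points. The gap is real but closable, in either of two ways. (i) Do not adjoin indeterminates: allow the arc to have entries in $\mathbb{F}[t]$ of higher degree in $t$. The complement of the genericity locus is contained in a hypersurface $\{f=0\}$, and $f(I+tX,I+tY,I+tZ)$ is not identically zero as a polynomial in the entries of $(X,Y,Z)$; since $\mathbb{F}[t]$ is an infinite integral domain, there exist $X(t),Y(t),Z(t)\in M_{n}(\mathbb{F}[t])$ with $U(0)=V(0)=W(0)=I$ whose generic fibre lies in the genericity locus, even when $\mathbb{F}$ is finite. (ii) If you do pass to $\mathbb{F}'=\mathbb{F}(x_{1},\dots,x_{N})$, apply the valuative criterion not to $G(r,n)$ but to the closed (hence proper) subscheme $S(\mathcal{E},I)\cap S(\mathcal{F},J)\cap S(\mathcal{G},K)$, and eliminate the $x_{i}$ one at a time along the discrete valuation rings $\mathbb{F}(x_{1},\dots,x_{i-1})[x_{i}]_{(x_{i})}$; each specialization lands in a smaller purely transcendental extension and the last one in $\mathbb{F}$. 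With either repair your argument is complete, though it still presupposes the generic field statement, which the paper obtains from the same reduction that proves the arbitrary-flag case directly, so the detour does not save any of the combinatorial work.
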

The search for projections $P$ satisfying the conclusion of Theorem
\ref{th:intersections-not-generic} is much more difficult when $c_{IJK}>1$.
One of the simplest cases of this problem is equivalent to the invariant
subspace problem relative to a II$_{1}$ factor $\mathcal{A}$; this
case was first discussed in \cite{CoDy-reduction} where an approximate
solution is found. The relative invariant subspace problem remains
open but there was spectacular progress in the work of U. Haagerup
and H. Schultz \cite{haag-sch1,haag-sch2}.

The function $\lambda_{A}$ can be defined more generally for a selfadjoint
element of a von Neumann algebra $\mathcal{A}$ endowed with a faithful,
normal trace $\tau$. The inequalities in Theorem \ref{th:the-Horn-inequalities-in-any-A}
are in fact true in this more general context. Rather than prove this
fact directly, we will embed any such von Neumann algebra in a II$_{1}$
factor, in such a way that the trace is preserved. An alternative
proof can be obtained using von Neumann's reduction theory.

The remainder of this paper is organized as follows. In Section 1
we describe an enumeration of the sets $I,J,K$ with $c_{IJK}>0$
in terms of a class of measures on the plane. This enumeration is
essentially the one indicated in \cite{KTW}; the measures we use
can be viewed as the second derivatives of hives, or the first derivatives
of honeycombs. (The fact that honeycombs enumerate Schubert intersection
problems is proved in a direct way in the appendix of \cite{buch-fulton-sat};
see also Tao's `proof without words' illustrated in \cite{vakil:geom-rule}.)
We also describe the duality observed in \cite[Remark 2 on p. 42]{KTW},
realized by inflation to a puzzle, and {*}-deflation to a dual measure.
In Section 2, we use then the puzzle characterization of rigidity
from \cite{KTW} to formulate the condition $c_{IJK}=1$ in terms
of the support of the corresponding measure $m$. This result may
be viewed as the $N=0$ version of \cite[Lemma 8]{KTW}. This characterization
is used in Section 3 to show that a measure $m$ corresponding to
sets with $c_{IJK}=1$ (also called a rigid measure) can be written
uniquely as a sum $m_{1}+m_{2}+\cdots+m_{p}$ of extremal measures,
and to introduce an order relation `$\prec$' on the set $\{ m_{j}:1\le j\le p\}$.
In Section 4 we provide an extension of the concept of clockwise overlay
from \cite{KTW}, and show that `$\prec$' provides examples of clockwise
overlays. The main results are proved in Section 5. The most important
observation is that general Schubert intersection problems can be
reduced to problems corresponding to extremal measures. The order
relation is essential here as the minimal measures $m_{j}$ (relative
to `$\prec$') must be considered first. A problem corresponding to
an extremal measure has then a dual form (obtained by taking orthogonal
complements) which is no longer extremal, except for essentially one
trivial example. Section 6 contains a number of illustrations of this
reduction procedure, including explicit expressions for the corresponding
lattice polynomials which yield the solution for generic flags. In
Section 7 we describe a particular intersection problem which is equivalent
to the invariant subspace problem relative to a II$_{1}$ factor.
In Section 8 we embed any algebra with a trace in a factor of type
II$_{1}$, and we show that projections can be moved to general position
by letting them evolve according to free unitary Brownian motion.

There has been quite a bit of recent work on the geometry and intersection
of Schubert cells. Belkale \cite{belk-horn} shows that the inductive
structure of the intersection ring of the Grassmannians can be justified
geometrically. R. Vakil \cite{vakil:geom-rule} provides an approach
to the structure of this ring by a process of flag degenerations.
He also indicates \cite{vakil-induction,vakil:geom-rule} that this
can be used in order to solve effectively all Schubert intersection
problems, at least for generic flags. More precisely, \cite[Remark 2.10]{vakil-induction}
suggests that these solutions can be found, after an appropriate parametrization,
by an application of the implicit function theorem which can be made
numerically effective. These methods would apply to arbitrary values
of $c_{IJK}$, and it is not clear that they would yield the formulas
of Theorem \ref{th:intersections-generic-case} when $c_{IJK}=1$.
The method of flag degeneration of \cite{vakil:geom-rule} depends
essentially on finite dimensionality. A prospective analogue in a
II$_{1}$ factor would require a checkerboard with a continuum of
squares, and would be played with a continuum of pieces. This kind
of game is difficult to organize, as illustrated for instance by the
cumbersome argument used in \cite{ber-li-freede}. The result proved
with so much labor in that paper is deduced very simply from our current
methods, as shown in Section 6 below.

\section{Horn Inequalities and Measures}

Fix integers $1\le r\le n$, and subsets $I,J,K\subset\{1,2,\dots,n\}$
of cardinality $r$. We will find it useful on occasion to view the
set $I$ as an increasing function $I:\{1,2,\dots,r\}\to\{1,2,\dots,n\}$,
i.e., $I=\{ I(1)<I(2)<\cdots<I(r)\}$. The results of \cite{KTW}
show that we have $c_{IJK}>0$ if and only if there exist selfadjoint
matrices $X,Y,Z\in M_{r}(\mathbb{C})$ such that $X+Y+Z=2(n-r)1_{\mathbb{C}^{r}}$
and\[
\lambda_{X}(r+1-\ell)=I(\ell)-\ell,\lambda_{Y}(r+1-\ell)=J(\ell)-\ell,\lambda_{Z}(r+1-\ell)=K(\ell)-\ell\]
for $\ell=1,2,\dots,r.$ Thus, as conjectured by Horn, such sets can
be described inductively, using Horn inequalities with fewer terms.
In other words, we have $c_{IJK}>0$ if and only if\[
\sum_{\ell=1}^{r}[(I(\ell)-\ell)+(J(\ell)-\ell)+(K(\ell)-\ell)]=2r(n-r),\]
and \[
\sum_{\ell=1}^{s}[(I(I'(\ell))-I'(\ell))+(J(J'(\ell))-J'(\ell))+(K(K'(\ell))-K'(\ell))]\ge2s(n-r)\]
whenever $s\in\{1,2,\dots,r-1\}$ and $I',J',K'\subset\{1,2,\dots,r\}$
are sets of cardinality $s$ such that $c_{I'J'K'}>0$. The last inequality
can also be written as\[
\sum_{\ell=1}^{s}[(I(I'(\ell))-\ell)+(J(J'(\ell))-\ell)+(K(K'(\ell))-\ell)]\ge2s(n-s).\]

The numbers $c_{IJK}$ can be calculated using the Littlewood-Richardson
rule which we discuss next. We use the form of the rule described
in \cite{KTW}, so we need first to describe a set of measures on
the plane. Begin by choosing three unit vectors $u,v,w$ in the plane
such that $u+v+w=0$.


\begin{center} \begin{picture}(0,0)%
\includegraphics{uvw.pstex}%
\end{picture}%
\setlength{\unitlength}{3947sp}%
\begingroup\makeatletter\ifx\SetFigFont\undefined%
\gdef\SetFigFont#1#2#3#4#5{%
  \reset@font\fontsize{#1}{#2pt}%
  \fontfamily{#3}\fontseries{#4}\fontshape{#5}%
  \selectfont}%
\fi\endgroup%
\begin{picture}(777,791)(1036,-470)
\put(1051, 14){\makebox(0,0)[lb]{\smash{{\SetFigFont{12}{14.4}{\familydefault}{\mddefault}{\updefault}{$u$}%
}}}}
\put(1756, 14){\makebox(0,0)[lb]{\smash{{\SetFigFont{12}{14.4}{\familydefault}{\mddefault}{\updefault}{$w$}%
}}}}
\put(1336,-406){\makebox(0,0)[lb]{\smash{{\SetFigFont{12}{14.4}{\familydefault}{\mddefault}{\updefault}{$v$}%
}}}}
\end{picture}%

\end{center}


Consider the \emph{lattice points} $iu+jv$ with integer $i,j$. A
segment joining two nearest lattice points will be called a \emph{small
edge}. We are interested in positive measures $m$ which are supported
by a union of small edges, whose restriction to each small edge is
a multiple of linear measure, and which satisfy the balance condition
(called zero tension in \cite{KTW})\begin{equation}
m(AB)-m(AB')=m(AC)-m(AC')=m(AD)-m(AD')\label{eq:balance}\end{equation}
whenever $A$ is a lattice point and the lattice points $B,C',D,B',C,D'$
are in cyclic order around $A$.

\begin{center} \begin{picture}(0,0)%
\includegraphics{BCD-cyclic.pstex}%
\end{picture}%
\setlength{\unitlength}{3947sp}%
\begingroup\makeatletter\ifx\SetFigFont\undefined%
\gdef\SetFigFont#1#2#3#4#5{%
  \reset@font\fontsize{#1}{#2pt}%
  \fontfamily{#3}\fontseries{#4}\fontshape{#5}%
  \selectfont}%
\fi\endgroup%
\begin{picture}(1380,1261)(886,-1775)
\put(1801,-661){\makebox(0,0)[lb]{\smash{{\SetFigFont{10}{10}{\familydefault}{\mddefault}{\updefault}{$B'$}%
}}}}
\put(2251,-1200){\makebox(0,0)[lb]{\smash{{\SetFigFont{10}{10}{\familydefault}{\mddefault}{\updefault}{$C$}%
}}}}
\put(1201,-1711){\makebox(0,0)[lb]{\smash{{\SetFigFont{10}{10}{\familydefault}{\mddefault}{\updefault}{$B$}%
}}}}
\put(901,-1200){\makebox(0,0)[lb]{\smash{{\SetFigFont{10}{10}{\familydefault}{\mddefault}{\updefault}{$C'$}%
}}}}
\put(1201,-661){\makebox(0,0)[lb]{\smash{{\SetFigFont{10}{10}{\familydefault}{\mddefault}{\updefault}{$D$}%
}}}}
\put(1951,-1711){\makebox(0,0)[lb]{\smash{{\SetFigFont{10}{10}{\familydefault}{\mddefault}{\updefault}{$D'$}%
}}}}
\put(1701,-1050){\makebox(0,0)[lb]{\smash{{\SetFigFont{10}{10}{\familydefault}{\mddefault}{\updefault}{$A$}%
}}}}
\end{picture}%

\end{center}If $e$ is a small edge, the value $m(e)$ is equal to the density
of $m$ relative to linear measure on that edge.

Fix now an integer $r\ge1$, and denote by $\triangle_{r}$ the (closed)
triangle with vertices $0,ru,$ and $ru+rv=-rw$. We will use the
notation $A_{j}=ju,B_{j}=ru+jv$, and $C_{j}=(r-j)w$ for the lattice
points on the boundary of $\triangle_{r}$. We also set \[
X_{j}=A_{j}+w,Y_{j}=B_{j}+u,Z_{j}=C_{j}+v\]
for $j=0,1,2,\dots,r+1$. The following picture represents $\triangle_{5}$
and the points just defined; the labels are placed on the left.\begin{center}

\begin{picture}(0,0)%
\includegraphics{basic-triangle.pstex}%
\end{picture}%
\setlength{\unitlength}{3947sp}%
\begingroup\makeatletter\ifx\SetFigFont\undefined%
\gdef\SetFigFont#1#2#3#4#5{%
  \reset@font\fontsize{#1}{#2pt}%
  \fontfamily{#3}\fontseries{#4}\fontshape{#5}%
  \selectfont}%
\fi\endgroup%
\begin{picture}(2303,2328)(2536,-2305)
\put(4251,-1336){\makebox(0,0)[lb]{\smash{{\SetFigFont{10}{10}{\familydefault}{\mddefault}{\updefault}{$Z_2$}%
}}}}
\put(4401,-1636){\makebox(0,0)[lb]{\smash{{\SetFigFont{10}{10}{\familydefault}{\mddefault}{\updefault}{$Z_1$}%
}}}}
\put(4551,-1936){\makebox(0,0)[lb]{\smash{{\SetFigFont{10}{10}{\familydefault}{\mddefault}{\updefault}{$Z_0$}%
}}}}
\put(4251,-1986){\makebox(0,0)[lb]{\smash{{\SetFigFont{10}{10}{\familydefault}{\mddefault}{\updefault}{$C_0$}%
}}}}
\put(3051,-1986){\makebox(0,0)[lb]{\smash{{\SetFigFont{10}{10}{\familydefault}{\mddefault}{\updefault}{$B_1$}%
}}}}
\put(2751,-1986){\makebox(0,0)[lb]{\smash{{\SetFigFont{10}{10}{\familydefault}{\mddefault}{\updefault}{$B_0$}%
}}}}
\put(3201,-2236){\makebox(0,0)[lb]{\smash{{\SetFigFont{10}{10}{\familydefault}{\mddefault}{\updefault}{$Y_2$}%
}}}}
\put(2901,-2236){\makebox(0,0)[lb]{\smash{{\SetFigFont{10}{10}{\familydefault}{\mddefault}{\updefault}{$Y_1$}%
}}}}
\put(2601,-2236){\makebox(0,0)[lb]{\smash{{\SetFigFont{10}{10}{\familydefault}{\mddefault}{\updefault}{$Y_0$}%
}}}}
\put(3501,-436){\makebox(0,0)[lb]{\smash{{\SetFigFont{10}{10}{\familydefault}{\mddefault}{\updefault}{$A_0$}%
}}}}
\put(3351,-136){\makebox(0,0)[lb]{\smash{{\SetFigFont{10}{10}{\familydefault}{\mddefault}{\updefault}{$X_0$}%
}}}}
\put(3201,-436){\makebox(0,0)[lb]{\smash{{\SetFigFont{10}{10}{\familydefault}{\mddefault}{\updefault}{$X_1$}%
}}}}
\put(4101,-1636){\makebox(0,0)[lb]{\smash{{\SetFigFont{10}{10}{\familydefault}{\mddefault}{\updefault}{$C_1$}%
}}}}
\put(3351,-736){\makebox(0,0)[lb]{\smash{{\SetFigFont{10}{10}{\familydefault}{\mddefault}{\updefault}{$A_1$}%
}}}}
\put(3051,-736){\makebox(0,0)[lb]{\smash{{\SetFigFont{10}{10}{\familydefault}{\mddefault}{\updefault}{$X_2$}%
}}}}
\end{picture}%

\end{center}Given a measure $m$, a \emph{branch point} is a lattice point incident
to at least three edges in the support of $m$. We will only consider
measures with at least one branch point. This excludes measures whose
support consists of one or more parallel lines. We will denote by
$\mathcal{M}_{r}$ the collection of all measures $m$ satisfying
the balance condition above, whose branch points are contained in
$\triangle_{r}$, and such that\[
m(A_{j}X{}_{j+1})=m(B_{j}Y{}_{j+1})=m(C_{j}Z{}_{j+1})=0,\quad j=0,1,\dots,r.\]
 Analogously $\mathcal{M}_{r}^{*}$ consists of measures $m$ whose
branch points are contained in $\mathcal{M}_{r}$, and such that\[
m(A_{j}X{}_{j})=m(B_{j}Y{}_{j})=m(C_{j}Z{}_{j})=0,\quad j=0,1,\dots,r.\]
Clearly, $\mathcal{M}_{r}^{*}$ can be obtained from $\mathcal{M}_{r}$
by reflection relative to one of the angle bisectors of $\triangle_{r}$.

Given a measure $m\in\mathcal{M}_{r}$, we define its \emph{weight}
$\omega(m)\in\mathbb{R}_{+}$ to be\[
\omega(m)=\sum_{j=0}^{r}m(A_{j}X_{j})=\sum_{j=0}^{r}m(B_{j}Y_{j})=\sum_{j=0}^{r}m(C_{j}Z_{j})\]
and its \emph{boundary} $\partial m=(\alpha,\beta,\gamma)\in(\mathbb{R}^{r})^{3}$,
where\[
\alpha_{\ell}=\sum_{j=0}^{\ell-1}m(A_{j}X_{j}),\beta_{\ell}=\sum_{j=0}^{\ell-1}m(B_{j}Y_{j}),\gamma_{\ell}=\sum_{j=0}^{\ell-1}m(C_{j}Z_{j}),\quad\ell=1,2,\dots,r.\]
The equality of the three sums giving $\omega(m)$ is an easy consequence
of the balance condition. 

The results of \cite{KT,KTW} imply that the sets $I,J,K\subset\{1,2,\dots,n\}$
of cardinality $r$ satisfy $c_{IJK}>0$ if and only if there exists
a measure $m\in\mathcal{M}_{r}$ with weight $\omega(m)=n-r$, and
with boundary $\partial m=(\alpha,\beta,\gamma)$ such that\[
\alpha_{\ell}=I(\ell)-\ell,\beta_{\ell}=J(\ell)-\ell,\gamma_{\ell}=K(\ell)-\ell,\quad\ell=1,2,\dots,r.\]
The number $c_{IJK}$ is equal to the number of measures in $\mathcal{M}_{r}$
satisfying these conditions, and with integer densities on all edges.
Moreover, as shown in \cite{KTW}, if $c_{IJK}=1$, there is only
one measure $m$ satisfying these conditions, and its densities must
naturally be integers. In general, we will say that a measure $m\in\mathcal{M}_{r}$
is \emph{rigid} if it is entirely determined by its weight and boundary.

We will also use the version of these results in terms of $\mathcal{M}_{r}^{*}$,
so we define for $m\in\mathcal{M}_{r}^{*}$ the weight \[
\omega(m)=\sum_{j=0}^{r}m(A_{j}X{}_{j+1})=\sum_{j=0}^{r}m(B_{j}Y{}_{j+1})=\sum_{j=0}^{r}m(C_{j}Z{}_{j+1})\]
and boundary $\partial m=(\alpha,\beta,\gamma)$, where\[
\alpha_{\ell}=\sum_{j=r+1-\ell}^{r}m(A_{j}X{}_{j+1}),\beta_{\ell}=\sum_{j=r+1-\ell}^{r}m(B_{j}Y{}_{j+1}),\gamma_{\ell}=\sum_{j=r+1-\ell}^{r}m(C_{j}Z{}_{j+1})\]
for $\ell=1,2,\dots,r$.

Measures in $\mathcal{M}_{r}$ or $\mathcal{M}_{r}^{*}$ are entirely
determined by their restrictions to $\triangle_{r}$ and, when the
corners of $\triangle_{r}$ are not branch points, even by their restrictions
to the interior of $\triangle_{r}$. Indeed, the lack of branch points
outside $\triangle_{r}$ implies that the densities are constant on
the half-lines starting with $A_{j}X_{j},B_{j}Y_{j}$ and $C_{j}Z_{j}$.
Note that a restriction $m|\triangle_{r}$ with $m\in\mathcal{M}_{r}$
is not generally of the form $m'|\triangle_{r}$ for some $m'\in\mathcal{M}_{r}^{*}$.
The first picture below represents $\triangle_{r}$ (dotted lines),
and the support (solid lines) of a measure in $\triangle_{r}$. The
second one represents the support of a measure in $\mathcal{M}_{r}^{*}$.
\begin{center}

\includegraphics{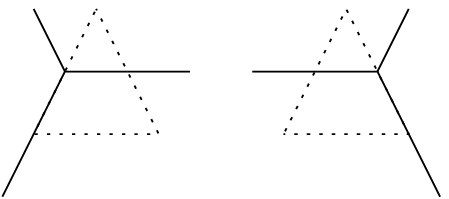}

\end{center}

To conclude this section, we establish a connection between measures
in $\mathcal{M}_{r}$ and the honeycombs of \cite{KT}. A \emph{honeycomb}
is a function $h$ defined on the set of small edges contained in
$\triangle_{r}$ satisfying the following two properties:

\begin{enumerate}

\item[(i)]If $ABC$ is a small triangle contained in $\triangle_{r}$, we have
$h(AB)+h(AC)+h(BC)=0$.\item[(ii)]If $A,B,C,D$ are lattice vertices
in $\triangle_{r}$ such that $B=A+u,$ $C=A-v$, $D=A+w$ (or $B=A+v$,
$C=A-w$, $D=A+u$, or $B=A+w$, $C=A-u$, $D=A+v$), then\[
h(AB)-h(CD)=h(BC)-h(AD)\ge0.\]
\end{enumerate}

The reason for the term honeycomb is not visible in our definition.
One can associate to each small triangle $ABC\subset\triangle_{r}$
the point $(h(AB),h(BC),h(AC))$ in the plane $\{(x,y,z)\in\mathbb{R}^{3}:x+y+z=0\}$.
These points form the vertices of a graph which looks like a honeycomb
if it is not too degenerate (cf. \cite{KT}).

The following result will be required for our discussion of the Horn
inequalities in Section 4.

\begin{lem}
\label{lem:Honeycomb-from-m}Let $m\in\mathcal{M}_{r}$ be a measure
with weight $\omega$ and $\partial m=(\alpha,\beta,\gamma)$. There
exists a honeycomb $h$ with the following properties.
\begin{enumerate}
\item $h(A_{\ell-1}A_{\ell})=\alpha_{\ell}-2\omega/3$, $h(B_{\ell-1}B_{\ell})=\beta_{\ell}-2\omega/3,$
$h(C_{\ell-1}C_{\ell})=\gamma_{\ell}-2\omega/3$ for $\ell=1,2,\dots,r$.
\item If $B=A+u,C=A-v,D=A+w$ $($or $B=A+v,C=A-w,D=A+u$, or $B=A+w$,
$C=A-u$, $D=A+v)$ then $h(AB)-h(CD)=m(AC)$.
\end{enumerate}
\end{lem}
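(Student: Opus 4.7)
The plan is to treat the measure $m$ as the discrete derivative of $h$ and to integrate it. First, I would define $h$ on the boundary edges $A_{\ell-1}A_\ell$, $B_{\ell-1}B_\ell$, $C_{\ell-1}C_\ell$ by the formulas in (1), and then extend $h$ to all small edges of $\triangle_r$ by iterating the rhombus rule (2): once $h$ is known on one side $AB$ of a rhombus, the rule forces $h(CD) = h(AB) - m(AC)$, and symmetrically $h(AD) = h(BC) - m(AC)$ on the other pair of parallel sides. Propagating rhombus-by-rhombus from the boundary assigns a value to every small edge of $\triangle_r$.

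The heart of the argument is to show that this recursion is unambiguous. This is a cocycle problem: for any closed loop of rhombi, the signed sum of diagonal $m$-values that we pick up along the loop must vanish. By a standard chain-level argument, any such loop is a $\mathbb{Z}$-linear combination of elementary loops, each circling a single interior lattice point $V$, so it suffices to check that the contributions around a single vertex cancel; that cancellation is precisely the balance (zero-tension) equation~\eqref{eq:balance} at $V$. I also need to verify that the data on the three sides of $\triangle_r$ prescribed by (1) are mutually compatible: running the recursion once around the boundary must send $\alpha_\ell - 2\omega/3$ on the $A$-side to $\beta_\ell-2\omega/3$ on the $B$-side and $\gamma_\ell-2\omega/3$ on the $C$-side. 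This reduces to the defining identity $\omega = \sum_j m(A_jX_j) = \sum_j m(B_jY_j) = \sum_j m(C_jZ_j)$ of the weight together with a short bookkeeping calculation.

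Once $h$ is well defined, the two honeycomb axioms come essentially for free. Axiom (i) at a small triangle is a telescoping identity obtained by combining the three rhombus rules that meet the triangle. The equality part of (ii) follows from (i) applied to the two triangles forming the rhombus and the rhombus rule (2) applied to both pairs of parallel sides; the non-negativity $h(AB)-h(CD)\ge 0$ is automatic from $m(AC)\ge 0$, since $m$ is a positive measure. The main obstacle is the path-independence check, because the branch points of $m$ are precisely the lattice points where balance imposes a nontrivial cancellation, and loops threading several branch points must be handled carefully. A clean organising device is the associated hive -- a real-valued function $f$ on lattice points whose second differences across small rhombi recover $m$, as hinted at in the introduction. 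With $f$ in hand, $h$ can be read off as a first difference of $f$; then path-independence and both honeycomb axioms reduce to routine identities on $f$, and the balance condition is precisely what permits $f$ to be constructed.
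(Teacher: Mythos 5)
Your proposal is essentially the paper's own argument: the paper's proof is exactly this plan --- use condition (2) to compute the values of $h$ starting from the boundary data in (1), with the balance (zero-tension) condition supplying what is needed to verify the honeycomb axioms --- and it dismisses the verification as routine. The extra details you supply (reducing path-independence to loops around a single lattice point, checking boundary compatibility via the weight identity, and organizing the computation through the hive function whose second differences give $m$) are elaborations of that same verification rather than a different route.
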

\begin{proof}
This is routine. Condition $(2)$ allows us to calculate all the values
of $h$ starting from the boundary of $\triangle_{r}$. To verify
(ii) one must use the balance condition for measures in $\mathcal{M}_{r}$.
\end{proof}

\section{Inflation, Duality, and Rigidity}

Let $\mathcal{A}$ be a finite factor with trace normalized so that
$\tau(1)=n$, and let $\mathcal{E}=\{ E_{\ell}:\ell=0,1,\dots,n\}$
be a flag so that $\tau(E_{\ell})=\ell$ for all $\ell$. Fix also
a set $I\subset\{1,2,\dots,n\}$ of cardinality $r$ and a projection
$P\in\mathcal{A}$ with $\tau(P)=r$. We have $P\in S(\mathcal{E},I)$,
i.e. $\tau(P\wedge E_{I(\ell)})\ge\ell$ for $\ell=1,2,\dots,r$,
if and only if $\tau(P\wedge E_{\ell})\ge\varphi_{I}(\ell)$ for $\ell=0,1,\dots,n$,
where \[
\varphi_{I}(\ell)=p\text{ for }I(p)\le\ell<I(p+1)\]
for $p=0,1,\dots,r,$, and $I(0)=1,I(r+1)=n+1$. With the notation
$P^{\perp}=1-P$, these conditions imply \begin{eqnarray*}
\tau(P^{\perp}\wedge E_{\ell}^{\perp}) & = & n-\tau(P\vee E_{\ell})\\
 & = & n-\tau(P)-\tau(E_{\ell})+\tau(P\wedge E_{\ell})\\
 & \ge & n-r-\ell+\varphi_{I}(\ell).\end{eqnarray*}
This implies that $P\in S(\mathcal{E},I)$ if and only if $P^{\perp}\in S(\mathcal{E}^{\perp},I^{*})$,
where $\mathcal{E}^{\perp}=\{ E_{n-\ell}^{\perp}:\ell=0,1,\dots,n$\}
, and $I^{*}=\{ n+1-i:i\notin I\}$. In general, we will have $c_{I^{*}J^{*}K^{*}}=c_{IJK}$,
and this equality is realized by a duality considered in \cite{KTW}.
More precisely, assume that $m\in\mathcal{M}_{r}$. We define the
\emph{inflation} of $m$ as follows. Cut $\triangle_{r}$ along the
edges in the support of $m$ to obtain a collection of (white) puzzle
pieces, and translate these pieces away from each other in the following
way: the parallelogram formed by the two translates of a side $AB$
of a white puzzle piece has two sides of length equal to the density
of $m$ on $AB$ and $60^{\circ}$ clockwise from $AB$. The original
puzzle pieces and these parallelograms fit together, and leave a space
corresponding to each branch point in the support of $m$. Here is
an illustration of the process; the thinner lines in the support of
the measure have density one, and the thicker ones density 2. The
original pieces of the triangle $\triangle_{r}$ are white, the added
parallelogram pieces are dark gray, and the branch points become light
gray pieces. Each light gray piece has as many sides as there are
branches at the original branch point (counting the branches outside
$\triangle_{r}$, which are not represented in this figure, though
their number and densities are dictated by the balance condition,
and the fact that $m$ belongs to $\mathcal{M}_{r}$).

\begin{center}

\includegraphics[scale=0.7]{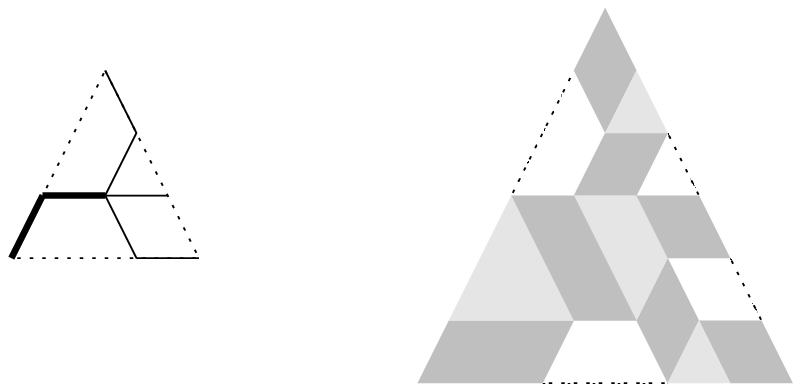}

\end{center}The original triangle $\triangle_{r}$ has been inflated to a triangle
of size $r+\omega(m)$, and the decomposition of this triangle into
white, gray, and light gray pieces is known as the \emph{puzzle} associated
to \emph{}$m$. Each gray parallelogram has two \emph{light gray sides},
i.e. sides bordering a light gray piece, and two \emph{white sides.}
The length of the light gray side equals the density of the white
sides in the original support of $m$ in $\triangle_{r}$. This process
can be applied to the entire support of $m$, but we are only interested
in its effect on $\triangle_{r}$. (The white regions in the puzzle
are called `zero regions', and the light gray ones `one regions' in
\cite{KTW}. We use in our drawings a color scheme different from
the one used in \cite{KTW}.)

We can now apply a dual deflation, or {*}\emph{deflation}, to the
puzzle of $m$ as follows: discard all the white pieces, and shrink
the gray parallelograms by reducing their white sides to points. The
segments obtained this way are assigned densities equal to the lengths
of the white sides of the corresponding parallelograms. In the picture
below, the shrunken parallelograms are represented as solid lines.
\begin{center}

\includegraphics[scale=0.7]{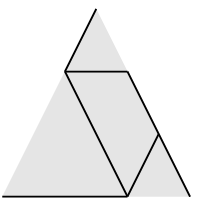}

\end{center}The result of this deflation is a triangle with sides $\omega(m)$,
endowed with a measure supported by the solid lines which will be
denoted $m^{*}$. The support of $m^{*}$ can be obtained directly
from the support of $m$ as follows: take every edge of a white puzzle
piece, rotate it $60^{\circ}$ clockwise, and change its length to
the density of $m$ on the original edge. The new segments must now
be translated so that the edges originating from the sides of a white
puzzle piece become concurrent. Thus, the dual picture depends primarily
on the combinatorial structure of the support of $m$. More precisely,
let us say that the measures $m\in\mathcal{M}_{r}$ and $m'\in\mathcal{M}_{r'}$
are \emph{homologous} if there is a bijection between the edges determined
by the support of $m$ and the edges determined by the support of
$m'$ such that corresponding edges are parallel, and concurrent edges
correspond with concurrent edges (the concurrence point being precisely
the one dictated by the correspondence of the edges). Then $m$ and
$m'$ are homologous if and only if $m^{*}$ and $m^{\prime*}$ are
homologous. For instance, measures in $\mathcal{M}_{r}$ that have
the same support are homologous. 

Assume now that the measure $m\in\mathcal{M}_{r}$ has integer densities,
and $I,J,K$ are the corresponding sets in $\{1,2,\dots,n=r+\omega(m)\}$.
Then the triangle obtained by inflating $m$ can be identified with
$\triangle_{n}$. Under this identification the small edges $A_{i-1}A_{i}$
are either white (if they border a white piece, or they belong to
a white edge of a gray parallelogram) or light gray. It is easy to
see that the white small edges are precisely $A_{i_{\ell}-1}A_{i_{\ell}}$
for $\ell=1,2,\dots,r$, and therefore the light gray edges correspond
with the complement of $I$. Furthermore, the light gray triangle
obtained by {*}deflation can be identified with $\triangle_{n-r}$,
and $m^{*}\in\mathcal{M}_{n-r}^{*}$ is a measure satisfying $\omega(m^{*})=r$.
This measure determines subsets of $\{1,2,\dots,n\}$ which are precisely
$I^{*},J^{*},K^{*}$. This observation gives a bijective proof of
the equality $c_{IJK}=c_{I^{*}J^{*}K^{*}}$.

The passage from $m$ to $m^{*}$ can be reversed by applying {*}inflation
to $m^{*}$, and then applying deflation to the resulting puzzle. 

Another important application of the inflation process is a characterization
of rigidity. Orient the edges of the gray parallelograms in a puzzle
so that they point away from the acute angles. Some of the border
edges do not have a neighboring gray parallelogram and will not be
oriented.\begin{center}

\includegraphics[scale=0.7]{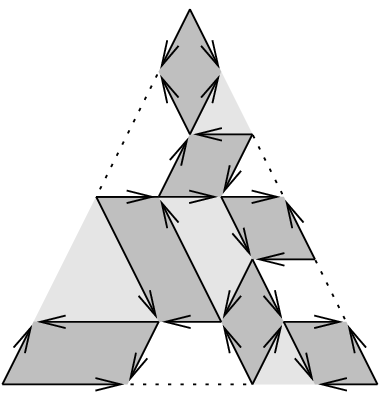}

\end{center}It was shown in \cite{KTW} that a measure $m$ is rigid if and only
if the associated directed graph contains no gentle loops, i.e., loops
which never turn more than $60^{\circ}$. Note that the number and
relative position of the puzzle pieces depends only on the support
of the measure $m$. The following result follows immediately.

\begin{prop}
Let $m_{1},m_{2}\in\mathcal{M}_{r}$ be such that the support of $m_{1}$
is contained in the support of $m_{2}$. If $m_{2}$ is rigid then
$m_{1}$ is rigid as well.
\end{prop}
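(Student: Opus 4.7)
The plan is to prove the contrapositive by a small perturbation argument: if $m_1$ is not rigid, then $m_2$ is not rigid. Since $m_1$ is not rigid, pick $\tilde m_1 \in \mathcal{M}_r$ distinct from $m_1$ but having the same weight and boundary as $m_1$, and set $\mu := \tilde m_1 - m_1$. Then $\mu$ is a signed, balanced measure with vanishing weight and boundary. The crucial point is that $\mu(e) \ge 0$ for every small edge $e \notin \text{supp}(m_1)$, because on such an edge $m_1(e) = 0 \le \tilde m_1(e)$.

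Next, consider the perturbed measure $m_2' := m_2 + t\mu$ for $t > 0$ small. I will show that $m_2' \in \mathcal{M}_r$, has the same weight and boundary as $m_2$, and is not equal to $m_2$. The balance condition, the defining equations $m(A_j X_{j+1}) = m(B_j Y_{j+1}) = m(C_j Z_{j+1}) = 0$, and the weight and boundary values are linear conditions and pass at once from $m_2$ and $\mu$ to $m_2'$. Nonnegativity of $m_2'$ holds for $t$ small: on each $e \in \text{supp}(m_2)$ we have $m_2(e) > 0$, so $m_2'(e) > 0$ once $t$ is sufficiently small; on each $e \notin \text{supp}(m_2)$ the hypothesis $\text{supp}(m_1) \subset \text{supp}(m_2)$ forces $e \notin \text{supp}(m_1)$, whence $\mu(e) \ge 0$ by the observation above and $m_2'(e) = t\mu(e) \ge 0$.

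The only delicate step is verifying that $m_2'$ has no branch points outside $\triangle_r$. Here I would use the structural fact (recalled in the paragraph preceding the proposition) that the support of any measure in $\mathcal{M}_r$ outside $\triangle_r$ is confined to three families of parallel half-lines, emanating in the directions $w$, $u$, $v$ from the boundary sides of $\triangle_r$ respectively, namely the half-lines starting with the edges $A_j X_j$, $B_j Y_j$, $C_j Z_j$, as forced by the boundary equations of $\mathcal{M}_r$. A short geometric check based on $u + v + w = 0$ shows that half-lines from different families meet only at the corner vertices of $\triangle_r$ and never strictly outside it, and within each family the half-lines are parallel. Consequently, for every lattice point $P$ strictly outside $\triangle_r$, at most one of these half-lines passes through $P$, so the union $\text{supp}(m_2) \cup \text{supp}(\tilde m_1)$ (which contains $\text{supp}(m_2')$) has at most two edges incident to $P$. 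Therefore $m_2'$ has no new branch points outside $\triangle_r$, and $m_2' \in \mathcal{M}_r$.

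Since $t\mu \neq 0$ gives $m_2' \neq m_2$, this contradicts the rigidity of $m_2$ and proves the contrapositive. The main obstacle is precisely the no-external-branch-point check of the previous paragraph; once the structural fact about the three parallel half-line families and their non-crossing outside $\triangle_r$ is in place, the rest of the argument is a routine linearity calculation. One can also derive the conclusion from the KTW gentle-loop characterization recalled just above the statement, by regarding the puzzle of $m_2$ as a refinement of that of $m_1$ and lifting a gentle loop, but the perturbation approach seems cleaner and avoids the combinatorial bookkeeping of detours around enlarged light-grey regions.
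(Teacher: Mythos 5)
Your argument is correct, but it takes a genuinely different route from the paper's. The paper settles this proposition in one line: it quotes the characterization from \cite{KTW} that a measure is rigid exactly when the oriented puzzle obtained by inflation has no gentle loop, and observes that the number and relative position of the puzzle pieces (hence the oriented graph) depend only on the support; rigidity is thereby a property of the support alone, and a gentle loop coming from the smaller support transfers to the larger one --- precisely the combinatorial bookkeeping you chose to sidestep. You instead argue directly from the definition of rigidity using the convex-cone structure of $\mathcal{M}_{r}$: given $\tilde m_{1}\ne m_{1}$ with the same weight and boundary, the perturbation $m_{2}+t(\tilde m_{1}-m_{1})$ is balanced, satisfies the vanishing conditions on the edges $A_{j}X_{j+1},B_{j}Y_{j+1},C_{j}Z_{j+1}$, has the same weight and boundary as $m_{2}$, is nonnegative for small $t>0$ because $\tilde m_{1}-m_{1}\ge0$ off $\mathrm{supp}(m_{1})$ and $\mathrm{supp}(m_{1})\subseteq\mathrm{supp}(m_{2})$, and differs from $m_{2}$; your treatment of the one delicate point is also right, since outside $\triangle_{r}$ the support of a measure in $\mathcal{M}_{r}$ consists of the half-lines starting with $A_{j}X_{j}$, $B_{j}Y_{j}$, $C_{j}Z_{j}$ (as the paper records), half-lines from one side are parallel, and half-lines from different sides meet only at corner vertices (e.g.\ $ju+sw=(r+t)u+kv$ forces $s=k=t=0$, $j=r$), so no lattice point strictly outside $\triangle_{r}$ can acquire three incident support edges. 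What each approach buys: yours is elementary and self-contained, makes no use of the imported puzzle machinery, and produces an explicit witness of non-rigidity for $m_{2}$ from one for $m_{1}$; the paper's is free of charge once the gentle-loop criterion is in place, which it needs anyway for the evil-loop analysis that follows, and it keeps rigidity visibly a support property, the form in which it is used later (rigid skeletons). Two small touches would tighten your write-up: to pick a single $t$ note that $m_{2}$ and $\tilde m_{1}-m_{1}$ take only finitely many density values (finitely many edges inside $\triangle_{r}$, constant density along the finitely many half-lines), so $\inf\{m_{2}(e):e\in\mathrm{supp}(m_{2})\}>0$; and observe that the perturbed measure still has a branch point, since for small $t$ its support contains that of $m_{2}$.
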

It is also easy to see that the support of a rigid measure does not
contain six edges which meet at the same point. Indeed, the inflation
reveals immediately a gentle loop.\begin{center}

\includegraphics[scale=0.7]{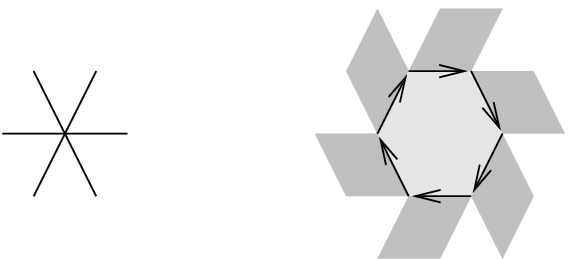}

\end{center}

We will need to characterize rigidity in terms of the support of the
original measure. Let $A_{1}A_{2}\cdots A_{k}A_{1}$ be a loop consisting
of small edges $A_{j}A_{j+1}$ contained in the support of a measure
$m\in\mathcal{M}_{r}$. We will say that this loop is \emph{evil}
if each three consecutive points $A_{j-1}A_{j}A_{j+1}=ABC$ form an
\emph{evil turn,} i.e. one of the following situations occurs:

\begin{enumerate}
\item $C=A$, and the small edges $BX,BY,BZ$ which are $120^{\circ},180^{\circ}$,
and $240^{\circ}$ clockwise from $AB$ are in the support of $m$.
\item $BC$ is $120^{\circ}$ clockwise from $AB$.
\item $C\ne A$ and $A,B,C$ are collinear.
\item $BC$ is $120^{\circ}$ counterclockwise from $AB$ and the edge $BX$
which is $120^{\circ}$ clockwise from $AB$ is in the support of
$m$.
\item $BC$ is $60^{\circ}$ counterclockwise from $AB$ and the edges $BX,BY$
which are $120^{\circ}$ and $180^{\circ}$ clockwise from $AB$ are
in the support of $m$.
\end{enumerate}
\begin{prop}
\label{prop:absence-of-evil}A measure $m\in\mathcal{M}_{r}$ is rigid
if and only if its support contains none of the following configurations:
\begin{enumerate}
\item Six edges meeting in one lattice point; 
\item An evil loop.
\end{enumerate}
\end{prop}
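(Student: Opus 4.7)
The plan is to reduce to the characterization of rigidity from \cite{KTW}, namely that $m$ is rigid if and only if the oriented puzzle graph associated to $m$ admits no \emph{gentle loop} (a closed directed walk that turns by at most $60^{\circ}$ at each vertex). Thus it suffices to show that the absence of the two forbidden configurations in the support of $m$ is equivalent to the absence of gentle loops in the inflated puzzle.

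First I would set up the dictionary between the support of $m$ and its puzzle. Each small edge $e$ in the support inflates to an oriented gray parallelogram $P(e)$; two adjacent edges meeting at a branch point $B$ correspond to two parallelograms sharing a corner on the boundary of the light gray piece $L(B)$ obtained from $B$, whose sides are in bijection with the edges of the support incident to $B$. A gentle loop is then a closed walk along the oriented sides of gray parallelograms, turning at most $60^{\circ}$ each time it crosses a light gray piece. Projecting such a walk back to the original support by collapsing each visited parallelogram to its edge produces a closed walk $A_{1}A_{2}\cdots A_{k}A_{1}$ in the support of $m$, in which the only vertices entered from the puzzle interior are the branch points $A_{j}=B$ corresponding to nontrivial light gray pieces.

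The heart of the proof is a local analysis at a single branch point $B$ with incoming edge $BA$ and outgoing edge $BC$. I would enumerate, as a function of the angle between $BA$ and $BC$ and of the set of edges incident to $B$, precisely when a gentle segment of walk can enter $L(B)$ through the side dual to $BA$ and leave through the side dual to $BC$. A direct geometric inspection shows that this happens exactly in the five listed evil configurations (1)--(5); moreover, when $L(B)$ is the regular hexagon arising from six edges meeting at $B$, one obtains a gentle loop by walking around $\partial L(B)$ itself, irrespective of any incoming/outgoing choice. Conversely, every evil turn yields, after inflation, a gentle segment through the corresponding $L(B)$, and concatenating these along an evil loop yields a gentle loop in the puzzle.

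Putting the two directions together: $m$ fails to be rigid if and only if the puzzle admits a gentle loop, if and only if the projected walk in the support consists of triples $A_{j-1}A_{j}A_{j+1}$ each of which is either an evil turn at $A_{j}$ or occurs at a six-edge branch point; by definition this is the same as the existence of one of the two configurations (1) or (2). The main obstacle I anticipate is the case-by-case geometric enumeration at each light gray piece: because $L(B)$ depends on all edges incident to $B$ and a gentle walk may slide along several consecutive sides of $L(B)$ (as happens for type (1), where the support-level walk backtracks and the puzzle-level walk traces two sides of a pentagon), one must verify systematically that no configuration outside the listed five admits a gentle crossing of $L(B)$, while each of the five does. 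The argument is elementary but requires careful bookkeeping, and it is essentially a specialization of the gentle-loop analysis of \cite[Lemma 8]{KTW} to the case $N=0$, as mentioned in the introduction.
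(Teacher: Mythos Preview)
Your approach is essentially the same as the paper's: both reduce to the gentle-loop criterion of \cite{KTW}, deflate a gentle loop in the puzzle to a closed walk in the support of $m$, and then carry out a local case analysis at each branch point to identify the allowed transitions as exactly the evil turns (with the hexagonal light gray piece giving the six-edge configuration). The paper organizes the enumeration by the shape of the light gray piece and the number $j$ of consecutive light gray sides traversed, and it takes the gentle loop to be of \emph{minimal} length so that each light gray piece encountered has at most five sides; you should incorporate this minimality assumption, and also note that the case $j=0$ (two consecutive white sides meeting at a corner of a white piece, with no light gray side between them) must be checked separately, since there the walk does not enter $L(B)$ at all.
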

\begin{proof}
Assume first that $m$ is not rigid, and consider a gentle loop of
minimal length in its puzzle. Use $a_{i}$ to denote white parallelogram
sides in this loop, and $b_{i}$ light gray parallelogram sides. The
sides $a_{i},b_{i}$ may consist of several small edges. The gentle
loop is of one of the following three forms:
\begin{enumerate}
\item $a_{1}a_{2}a_{3}a_{4}a_{5}a_{6}$, 
\item $b_{1}b_{2}b_{3}b_{4}b_{5}b_{6}$,
\item $a_{1}a_{2}\dots a_{i_{1}}b_{1}b_{2}\cdots b_{j_{1}}a_{i_{1}+1}\cdots a_{i_{2}}b_{j_{1}+1}\cdots b_{j_{2}}\cdots a_{i_{p-1}+1}\cdots a_{i_{p}}b_{j_{p-1}+1}\cdots b_{j_{p}},$
with at most five consecutive $a$ or $b$ symbols.
\end{enumerate}
In case (1), the loop runs counterclockwise around a white piece,
and it deflates to a translation of itself which is obviously evil.
In case $(2)$, the loop deflates to a single point where six edges
in the support of $m$ meet. In case $(3)$, the loop deflates to
$a'_{1}a'_{2}\cdots a'_{i_{p}},$ where each $a'_{j}$ is a translate
of $a_{j}$. The turns in this loop are obtained by deflating a path
of the form $a_{1}b_{1}\cdots b_{j}a_{2}$ with $0\le j\le5$. The
edges $b_{1}\cdots b_{j}$ run clockwise around a light gray piece
which must have at most five edges because the gentle loop was taken
to have minimal length. When $j=0$, the edges $a_{1}$ and $a_{2}$
border the same white puzzle piece, and it is obvious that $a'_{1}a'_{2}$
is an evil turn. The remaining cases will be enumerated according
to the number of edges in the light gray piece next to the edges $b_{j}$.When
this piece is a triangle, we can only have $j=1$, and the situation
is illustrated below. The dashed line in the deflation indicates a
portion of the support of $m$.\begin{center}

\includegraphics[scale=0.7]{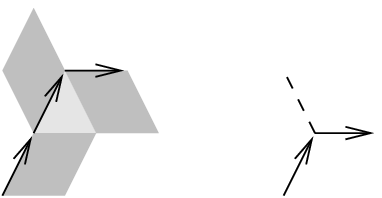}

\end{center} When the light gray piece is a parallelogram, we have $j=1$ or $j=2$.
The three possible deflations are as follows.\begin{center}

\includegraphics[scale=0.7]{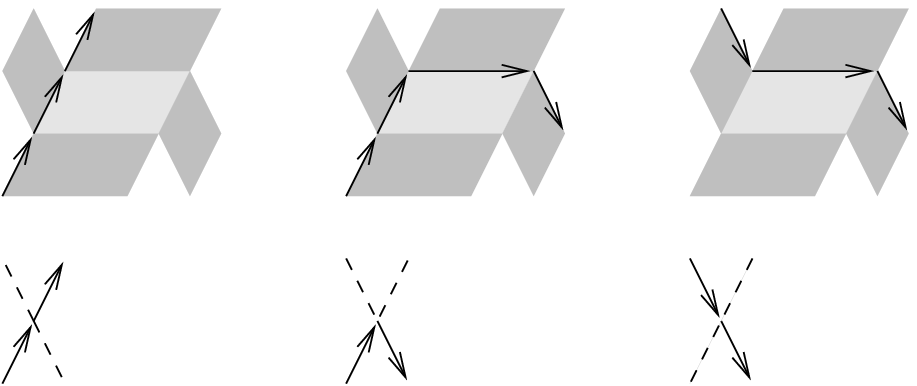}

\end{center}Next, the light gray piece may be a trapezoid, and $1\le j\le3$.
For $j=1$ we have these four possibilities:\begin{center}

\includegraphics[scale=0.7]{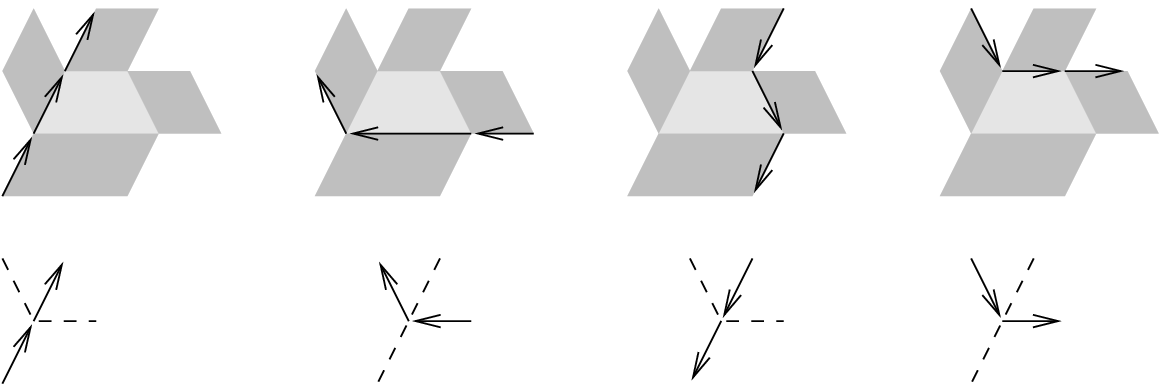}

\end{center}For $j=2,3$ there are three more possibilities.\begin{center}

\includegraphics[scale=0.7]{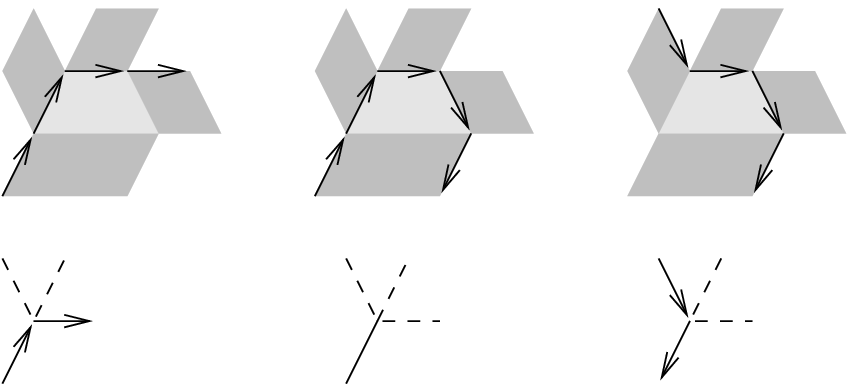}

\end{center}Finally, if the light gray piece is a pentagon, there are five situations
when $j=1$,\begin{center}

\includegraphics[scale=0.7]{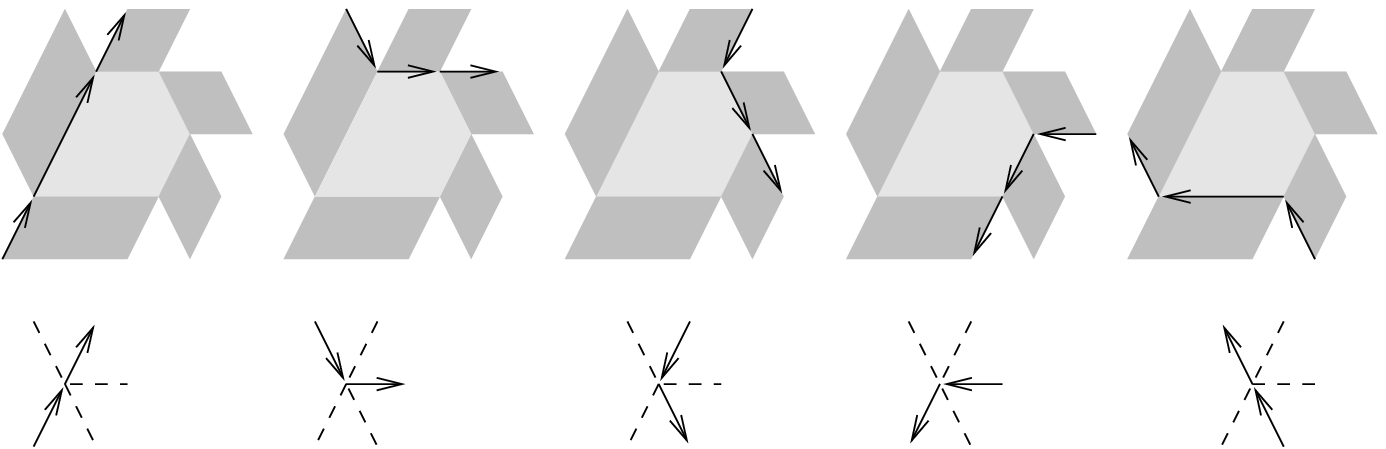}

\end{center}four situations when $j=2$,\begin{center}

\includegraphics[scale=0.7]{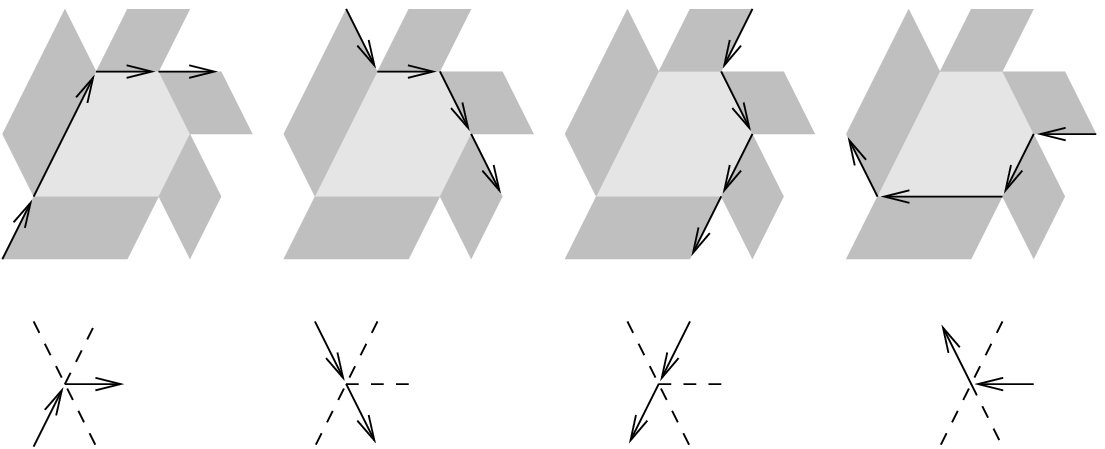}

\end{center}three situations when $j=3$,\begin{center}

\includegraphics[scale=0.7]{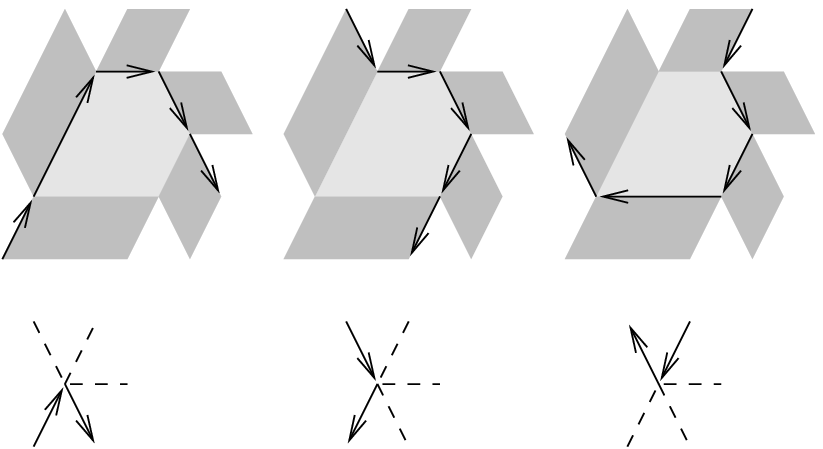}

\end{center}and three more when $j=4,5$.\begin{center}

\includegraphics[scale=0.7]{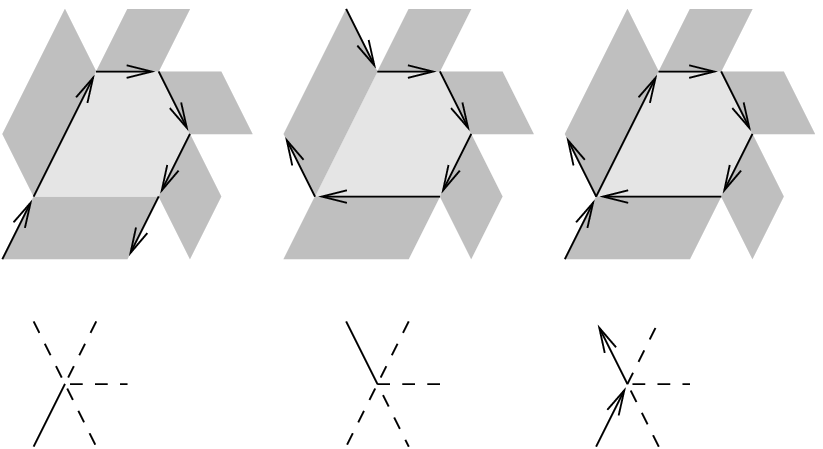}

\end{center}Of course, the last case does not occur in a minimal gentle loop.

Thus, in all situations, the deflated turns are evil, and therefore
the support of $m$ contains an evil loop. Conversely, if the support
of $m$ contains an evil loop, the above figures show that one can
obtain a gentle loop in the puzzle of $m$.
\end{proof}
The following figure represents the suport of a rigid measure in $\mathcal{M}_{8}$,
along with a loop which may seem evil but is not evil when traversed
in either direction.\begin{center}

\includegraphics[scale=0.7]{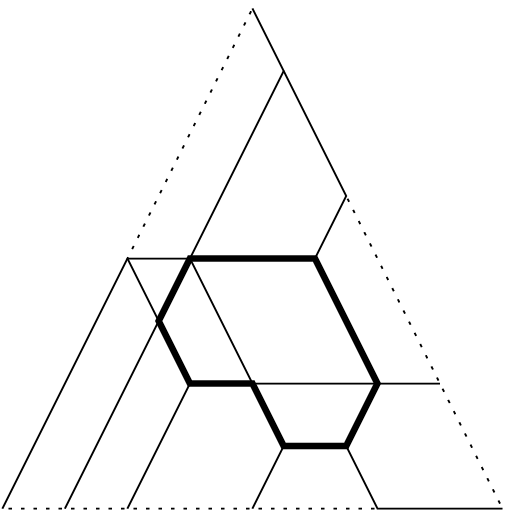}

\end{center}

\section{Extremal Measures and Skeletons}

For fixed $r\ge1$, the collection $\mathcal{M}_{r}$ is a convex
polyhedral cone. Recall that a measure $m\in\mathcal{M}_{r}$ is \emph{extremal}
(or belongs to an extreme ray) if any measure $m'\le m$ is a positive
multiple of $m$. The support of an extremal measure will be called
a \emph{skeleton}. Clearly, an extremal measure is entirely determined
by its value on any small edge contained in its skeleton. Checking
extremality is easily done by using the balance condition (\ref{eq:balance})
at all the branch points of the support to see how the density propagates
from one edge to the rest of the support.

In the following figure of a skeleton, the thicker edges must be assigned
twice the density of the other  edges. (This skeleton contains an
evil loop, hence the measures it supports are not rigid.)

\begin{center} \includegraphics[scale=0.7]{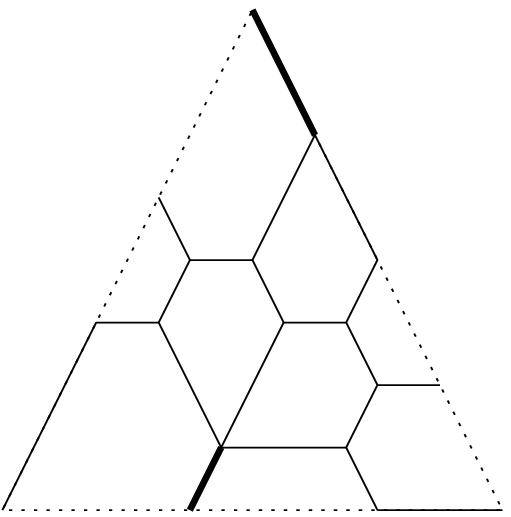}\end{center} 

It is not always obvious when a collection of edges supports a nonzero
measure in $\mathcal{M}_{r}$. The reader may find it amusing to verify
that the following figure represents sets which do not support any
nonzero measure.

\begin{center} \includegraphics[scale=0.5]{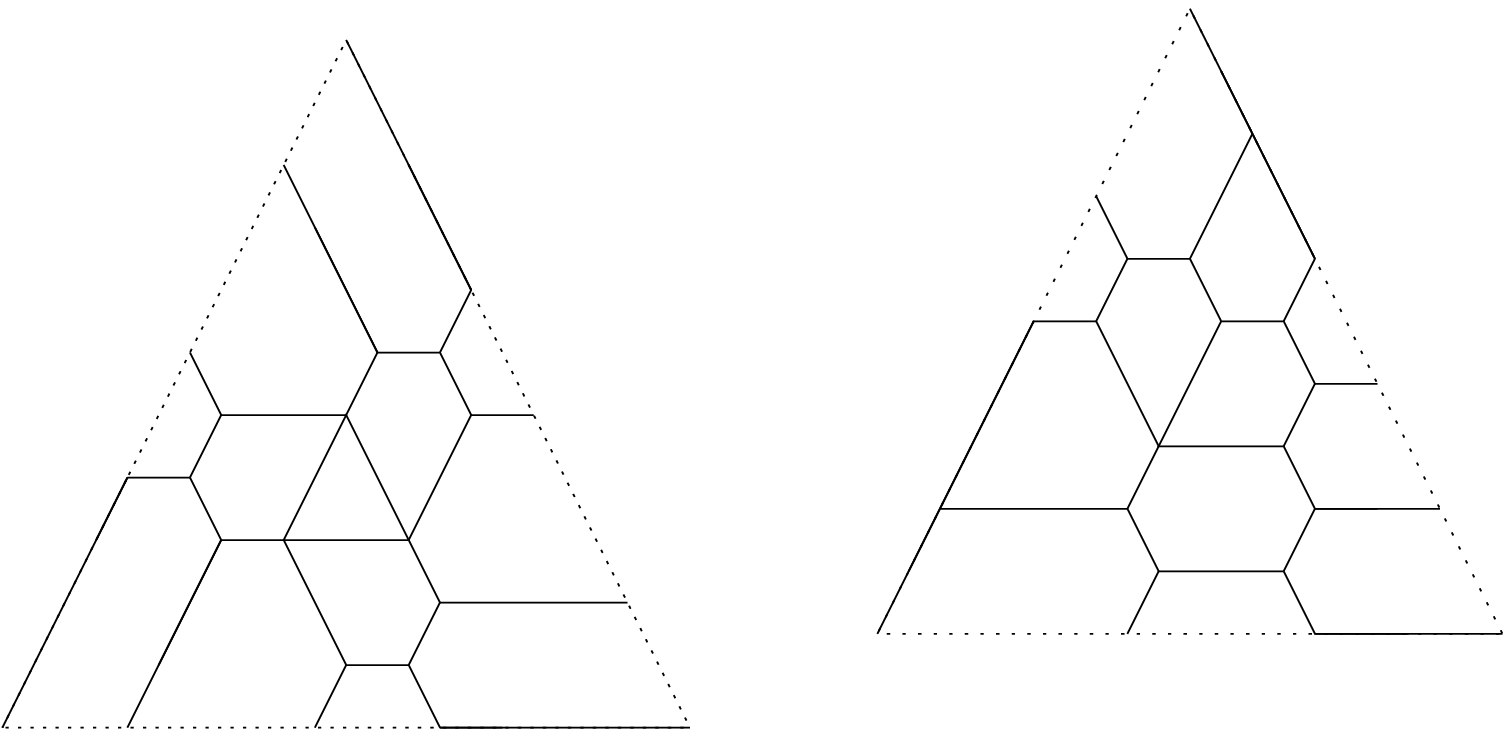} \end{center}

We will be mostly interested in the supports of rigid extremal measures,
which we will call rigid skeletons. When $r=1$, there are only the
three possible skeletons, all of them rigid, pictured below. \begin{center} \includegraphics[scale=0.7]{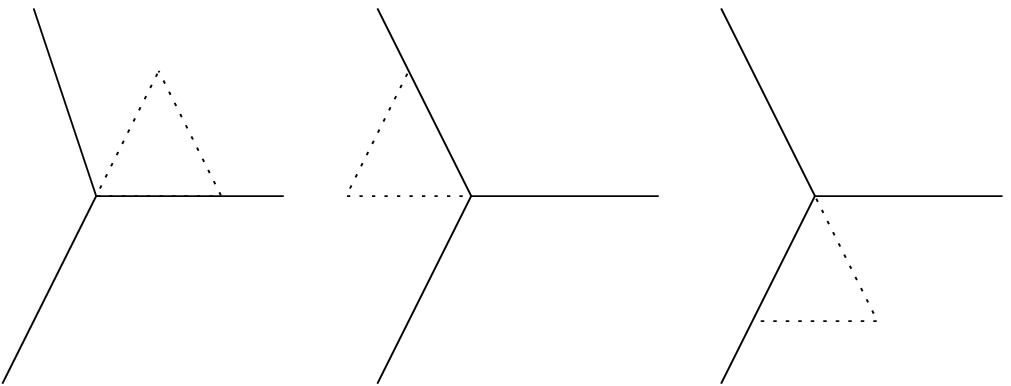} \end{center}The
following figure shows some rigid skeletons for $r=2,3,4,5$.

\begin{center} \includegraphics[scale=0.7]{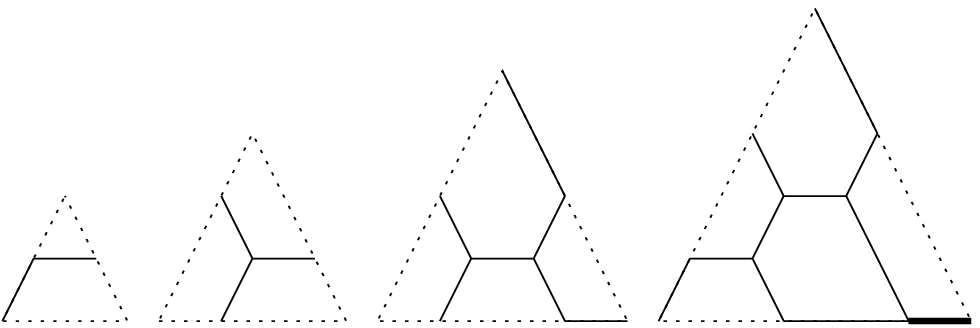} \end{center}

A greater variety of rigid skeletons is available for $r=6$. In addition
to larger versions (plus rotations and reflections) of the above skeletons,
we have the ones in the next figure.

\begin{center} \includegraphics[scale=0.7]{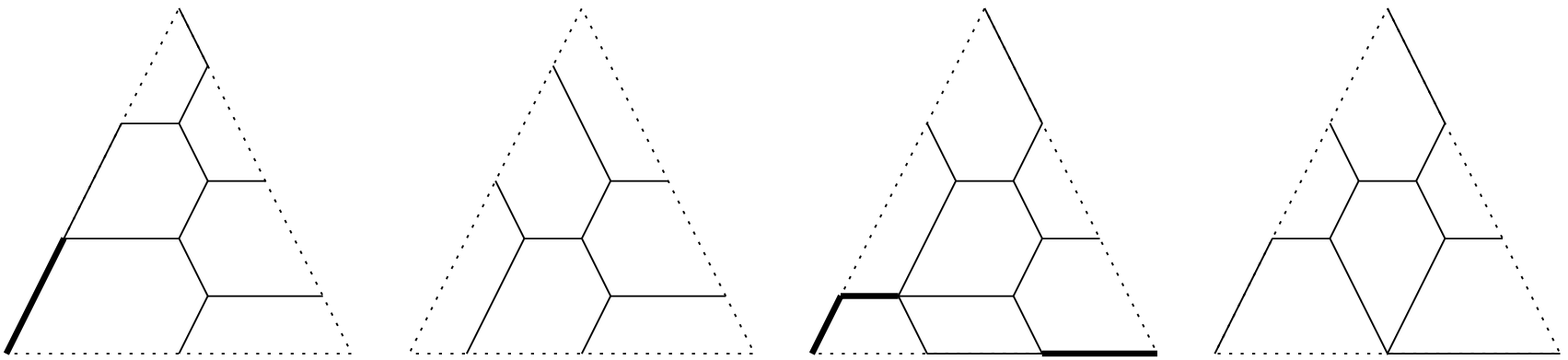} \end{center}

For larger $r$, rigid skeletons can be quite involved. We provide
just one more example for $r=8$. This skeleton has edges with densities
2 and 3 which we did not indicate.

\begin{center} \includegraphics[scale=0.7]{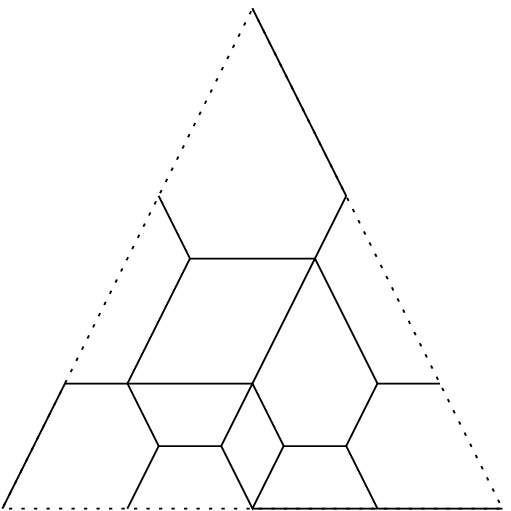} \end{center}

An important consequence of the characterization of rigid measures
in $\mathcal{M}_{r}$ is the fact that such measures can be written
uniquely as sums of extremal measures. Fix a rigid measure $m\in\mathcal{M}_{r}$,
and let $e=AB$ and $f=BC$ be two distinct small edges. We will write
$e\to_{m}f$, or simply $e\to f$ when $m$ is understood, if one
of these two situations arises:

\begin{enumerate}
\item $\angle ABC=120^{\circ}$ and the edge opposite $e$ at $B$ has $m$
measure equal to zero;
\item $e$ and $f$ are opposite, and one of the edges making an angle of
$60^{\circ}$ with $f$ has $m$ measure equal to zero.
\end{enumerate}
Note that in both cases we may also have $f\to e$. The significance
of this relation is that $e\to f$ implies that $m(e)\le m(f)$, with
strict inequality unless $f\to e$ as well. More precisely, if $e\to f$
and we do not have $f\to e$, there is at least one edge $g$ such
that $g\to f$ and the angle between $e$ and $g$ is $60^{\circ}$.
In this case we have\begin{equation}
m(f)=m(e)+m(g).\label{eq:descendant-of-two}\end{equation}
 Indeed, if $e$ and $f$ are opposite, the edge opposite $g$ must
have $m$ measure equal to zero.

A useful observation is that if $XY\to YZ$ but $YZ\not\to XY$, then
the edges $YA,YB,YC$ which are $120^{\circ},180^{\circ},240^{\circ}$
clockwise from $YZ$ must be in the support of $m$. In other words,
$ZYZ$ is an evil turn.

The following result is a simple consequence of the fact that the
measure $m$ exists.

\begin{lem}
Assume that a sequence of edges $e_{1},e_{2},\dots,e_{n}$ in the
support of $m$ is such that \[
e_{1}\to e_{2}\to e_{3}\to\cdots\to e_{n}\to e_{1}.\]
Then we also have\[
e_{n}\to e_{n-1}\to\cdots\to e_{1}\to e_{n}.\]

\end{lem}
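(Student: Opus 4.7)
The proof will be a short and essentially direct consequence of the monotonicity property for the relation $\to_m$ already stated in the paper, namely that $e \to_m f$ implies $m(e) \le m(f)$, with strict inequality unless also $f \to_m e$. The plan is to chain these inequalities around the given loop and exploit its cyclic nature to force equalities at every link.

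First, I apply the monotonicity repeatedly along the cycle $e_1 \to e_2 \to \cdots \to e_n \to e_1$ to obtain
\[
m(e_1) \le m(e_2) \le \cdots \le m(e_n) \le m(e_1).
\]
Because the chain closes on itself, each of these inequalities must be an equality; in particular $m(e_i) = m(e_{i+1})$ for every $i$ (with indices read cyclically modulo $n$).

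Next, I invoke the contrapositive of the strict-inequality clause: whenever $e \to_m f$ and $m(e) = m(f)$, one necessarily has $f \to_m e$ as well. Applying this to each consecutive pair $e_i \to e_{i+1}$ in the original loop yields $e_{i+1} \to e_i$ for all $i$ (cyclically). Collecting these reverse arrows in the proper order gives exactly the chain $e_n \to e_{n-1} \to \cdots \to e_1 \to e_n$, which is the desired conclusion.

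I do not anticipate any real obstacle here: the lemma is essentially an immediate formal consequence of the two already-established properties of $\to_m$ (non-strict monotonicity of $m$ along an arrow, and the characterization of when that monotonicity is strict via the explicit identity $m(f) = m(e) + m(g)$ with $g$ in the support of $m$). The geometric content — why the strict inequality holds unless $f \to_m e$ — has already been absorbed into the hypothesis we are allowed to use, so the argument reduces to the telescoping observation above.
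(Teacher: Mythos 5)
Your proposal is correct and is essentially the paper's own argument: the paper's one-line proof (``if one of the arrows cannot be reversed, then $m(e_{1})<m(e_{1})$'') is precisely your chained inequality $m(e_{1})\le m(e_{2})\le\cdots\le m(e_{n})\le m(e_{1})$ combined with the strictness clause of the relation $\to_{m}$, phrased as a contradiction instead of your direct equality-forcing argument. No gap; nothing further is needed.
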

\begin{proof}
Indeed, if one of the arrows cannot be reversed, then $m(e_{1})<m(e_{1})$. 
\end{proof}
We can therefore define a preorder relation on the set of small edges
as follows. Given two small edges  $e,f$, we write $e\Rightarrow f$
if either $e=f$, or there exist edges $e_{1},e_{2},\dots,e_{n}$
such that \[
e=e_{1}\to e_{2}\to\cdots\to e_{n}=f.\]
In this case, we will say that $f$ is a \emph{descendant} of $e$
and $e$ is an \emph{ancestor} of $f$. Two edges are \emph{equivalent},
$e\Leftrightarrow f$, if $e\Rightarrow f$ and $f\Rightarrow e$.
The relation of descendance becomes an order relation on the equivalence
classes of small edges. An edge $e$ will be called a \emph{root}
if $m(e)\ne0$ and $e$ belongs to a minimal class relative to descendance.
Clearly, every edge in the support of $m$ is a descendant of at least
one root.

If $m(e)\ne0$ and $e\Rightarrow f$ , then there exists a path $A_{0}A_{1}\cdots A_{k}$
in the support of $m$ such that $A_{j-1}A_{j}\to A_{j}A_{j+1}$ for
all $j$, $A_{0}A_{1}=e$, and $A_{k-1}A_{k}=f$. Paths of this form
will be referred to as \emph{descendance paths} from $e$ to $f$.
All the turns $A_{j-1}A_{j}A_{j+1}$ and $A_{j+1}A_{j}A_{j-1}$ in
a descendance path are evil.

\begin{lem}
Assume that $e$ and $f$ are in the support of a rigid measure $m\in\mathcal{M}_{r}$,
and $A_{0}A_{1}\cdots A_{k}$ and $B_{0}B_{1}\cdots B_{\ell}$ are
two descendance paths from $e$ to $f$. Then $A_{k-1}=B_{\ell-1}$
and $A_{k}=B_{\ell}$.
\end{lem}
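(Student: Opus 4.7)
The plan is to argue by contradiction. Suppose $A_{k-1}\ne B_{\ell-1}$. Since $\{A_{k-1},A_k\}=\{B_{\ell-1},B_\ell\}$ are the two endpoints of the undirected edge $f$, this forces $A_k=B_{\ell-1}$ and $A_{k-1}=B_\ell$, so the two descendance paths reach $f$ from opposite ends. From $A_0A_1=e=B_0B_1$ as directed edges, $A_0=B_0$ and $A_1=B_1$, hence there is a largest $i\ge 1$ with $A_j=B_j$ for every $j\le i$, and $A_{i+1}\ne B_{i+1}$. The strategy is to exhibit an evil loop in the support of $m$, contradicting rigidity via Proposition~\ref{prop:absence-of-evil}.

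Consider the candidate loop
\[
\mathcal{L}:\ A_i,\,A_{i+1},\,\ldots,\,A_k=B_{\ell-1},\,B_{\ell-2},\,\ldots,\,B_{i+1},\,B_i=A_i.
\]
Every turn of $\mathcal{L}$ at a vertex other than $A_i$ is either a forward descendance turn of the $A$-path, a reversed descendance turn of the $B$-path, or (at $A_k=B_{\ell-1}$) the reversed last $B$-turn $B_\ell B_{\ell-1}B_{\ell-2}$; all of these are evil by the general fact stated just before the lemma that forward and reversed descendance turns are evil.

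It remains to analyze the rejoining turn at $A_i$. Because both $A_iA_{i+1}$ and $A_iB_{i+1}$ are $\to$-successors of $A_{i-1}A_i$, case (1) of the definition of $\to$ forces the straight-ahead continuation of $A_{i-1}\to A_i$ past $A_i$ to have zero $m$-measure and places $A_{i+1}$ and $B_{i+1}$ at the two $60^\circ$-exterior-turn neighbors of $A_i$ on opposite sides. Writing $\alpha=m(A_{i-1}A_i)$, $\beta=m(A_iB_{i+1})$, $\gamma=m(A_iA_{i+1})$, the balance relation~(\ref{eq:balance}) at $A_i$ assigns $m$-densities $\gamma-\alpha$ and $\beta-\alpha$ to the two diagonal edges at $A_i$, both necessarily nonnegative. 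If $\gamma>\alpha$, the rejoining turn $B_{i+1}A_iA_{i+1}$ satisfies case~(5) of the evil-turn definition, because the required edge at $120^\circ$ clockwise from the incoming direction is in the support (its density is $\gamma-\alpha>0$), so $\mathcal{L}$ is evil. If instead $\gamma=\alpha$, then the opposite of the directed edge $A_{i+1}\to A_i$ at $A_i$ has zero $m$-density, and $A_{i+1}A_i\to A_iB_{i+1}$ is itself a valid descendance step by case~(1) of $\to$; reversing the orientation of $\mathcal{L}$ converts the rejoining turn into the forward descendance turn $A_{i+1}A_iB_{i+1}$, while every other turn of the reversed loop remains a descendance turn or the reverse of one. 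In either subcase, $\mathcal{L}$ (possibly after reversal) is an evil loop contained in the support of $m$, contradicting Proposition~\ref{prop:absence-of-evil}, which yields $A_{k-1}=B_{\ell-1}$ and hence $A_k=B_\ell$.

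The delicate point is the orientation-sensitivity of case~(5) of the evil-turn definition: its support condition singles out a specific (clockwise) side of the incoming direction. When the diagonal that would witness case~(5) has vanishing $m$-density, case~(5) fails, but the same vanishing is precisely what makes the rejoining step into a descendance step; flipping the traversal orientation of $\mathcal{L}$ then promotes the rejoining turn to a forward descendance turn, which is evil by the general claim. Apart from this bookkeeping about which side is which, the argument is a routine application of the balance relation and of the structural property of descendance paths.
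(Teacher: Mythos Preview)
Your argument begins by asserting that ``$A_0A_1=e=B_0B_1$ as directed edges'' forces $A_0=B_0$ and $A_1=B_1$. This is not justified: a small edge $e$ is undirected, and a descendance path from $e$ is only required to have $\{A_0,A_1\}$ equal to the vertex set of $e$. The paper's proof explicitly splits into the two cases $A_0=B_0$ and $A_0=B_1$, building a different loop in each. You have simply omitted the second case. (That case is not vacuous: for a root edge $e$, descendance paths may leave from either endpoint.)

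There is also a chirality issue in your analysis of the rejoining turn at $A_i$. You correctly deduce that $A_{i+1}$ and $B_{i+1}$ sit at the two $120^\circ$ positions relative to $A_{i-1}$, but you do not distinguish which is on which side. In one of the two arrangements the turn $B_{i+1}A_iA_{i+1}$ is $60^\circ$ \emph{clockwise}, and the enumeration of evil turns contains no clause for a $60^\circ$ clockwise turn; your case~(5) argument then does not apply regardless of whether $\gamma>\alpha$. Your fallback for $\gamma=\alpha$ appeals to the blanket assertion that forward descendance turns are evil, but that assertion is exactly what fails for a $60^\circ$ clockwise turn. To repair this you must track the chirality and, in the bad orientation, reverse the loop so that the rejoining turn becomes $60^\circ$ counterclockwise and argue via the density $\beta-\alpha$ instead of $\gamma-\alpha$. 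The paper's own proof glosses over this point entirely (it simply declares the loop evil), so your more careful local analysis is a genuine improvement in spirit, but as written it does not cover all sub-cases.
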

\begin{proof}
Assume to the contrary that $A_{k-1}=B_{\ell}$. There are two cases
to consider, according to whether $A_{0}=B_{1}$ or $A_{0}=B_{0}$.
In the first case, the loop \[
A_{0}A_{1}\cdots A_{k-1}B_{\ell-1}B_{\ell-2}\cdots B_{1}\]
 is evil, contradicting the rigidity of $m$. In the second case,
there is a first index $p$ such that $A_{p+1}\not=B_{p+1}$. Then
the loop \[
A_{p}A_{p+1}\cdots A_{k-1}B_{\ell-1}B_{\ell-2}\cdots B_{p}\]
 is evil, yielding again a contradiction.
\end{proof}
\begin{lem}
Let $e$ and $e'$ be inequivalent root edges in the support of a
rigid measure $m\in\mathcal{M}_{r}$, and let $f$ be an edge which
is a descendant of both e and $e'$. Consider a descendance path $A_{0}A_{1}\cdots A_{k}$
from $e$ to $f$, and a descendance path $B_{0}B_{1}\cdots B_{\ell}$
from $e'$ to $f$. Then $A_{k-1}=B_{\ell-1}$ and $A_{k}=B_{\ell}$.
\end{lem}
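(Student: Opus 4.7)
I would argue by contradiction, in parallel with the proof of the previous lemma. Since $A_{k-1}A_k$ and $B_{\ell-1}B_\ell$ are both the unordered edge $f$, the only alternative to the desired conclusion is $A_{k-1}=B_\ell$ and $A_k=B_{\ell-1}$, i.e., the two descendance paths traverse $f$ in opposite orientations. I assume this for contradiction and write $v=A_{k-1}=B_\ell$, $w=A_k=B_{\ell-1}$.

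The plan is to concatenate the two paths along $f$ into
\[
P\colon\ A_0,\, A_1,\, \ldots,\, A_{k-1}=v,\, B_{\ell-1}=w,\, B_{\ell-2},\, \ldots,\, B_0,
\]
which goes forward along the $A$-descendance, crosses $f$ from $v$ to $w$, and then runs backward along the $B$-descendance. First I would check that every turn of $P$ is evil: the internal turns within the $A$-portion are descendance turns; the junction turn at $v$ reproduces the final turn $A_{k-2}A_{k-1}A_k$ of the $A$-path; the junction turn at $w$ is $B_\ell B_{\ell-1}B_{\ell-2}$, the reverse of the final turn of the $B$-path; and the remaining turns along the reversed $B$-portion are reverses of descendance turns. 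All of these are evil by the observation in the paper that the turns in a descendance path are evil in both directions.

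If $A_0=B_0$, then $P$ is already a closed evil loop and Proposition~\ref{prop:absence-of-evil} immediately contradicts rigidity of $m$. More generally, any repeated vertex $A_p=B_q$ along $P$ yields a shorter closed sub-path all of whose turns are evil, again contradicting rigidity. So the proof reduces to ruling out the case that $P$ is a simple, non-self-intersecting path from $A_0$ to $B_0$.

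This last step is the main obstacle, and it is where the hypothesis that $e$ and $e'$ are \emph{root} edges must enter essentially. The intuition is that a simple evil path cannot terminate at a root edge without forcing additional branches at its endpoints: the earlier remark that if $XY\to YZ$ but $YZ\not\to XY$ then three specific edges at $Y$ lie in the support of $m$ shows that evil turns naturally fork, and the minimality of the equivalence classes of $e,e'$ forbids such forks at $A_0$ and $B_0$. I would make this precise either by a direct case analysis at the two ends of $P$ (using forced edges at the junctions $v,w$ combined with the root condition to force $P$ to curl back on itself), or by lifting $P$ to the inflated puzzle of $m$, where evil turns become gentle turns and the root condition on $e,e'$ translates into boundary constraints that prevent any open gentle path of this form. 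Either way, once the repetition $A_p=B_q$ is produced, an evil loop emerges and the argument closes, contradicting rigidity of $m$.
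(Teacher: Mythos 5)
Your reduction to the case $A_{k-1}=B_{\ell}$, $A_{k}=B_{\ell-1}$ and the check that every turn of the concatenated path $P$ is evil are fine, but the argument then stalls exactly where you say it does, and that missing step is the whole content of the lemma. The paper never tries to close a loop at the far endpoints $A_{0},B_{0}$ or at an accidental self-intersection of $P$; it closes the loop in the interior by a bounce-back. Because $e$ and $e'$ are inequivalent roots, $f$ is equivalent to neither of them (if $f\Leftrightarrow e$, then $e'\Rightarrow f\Rightarrow e$ would make the root $e$ a descendant of the root $e'$, forcing $e\Leftrightarrow e'$). Hence there exist indices $p$ and $q$ with $A_{p}A_{p+1}\not\to A_{p-1}A_{p}$ and $B_{q}B_{q+1}\not\to B_{q-1}B_{q}$, and the observation recorded before these lemmas (if $XY\to YZ$ but $YZ\not\to XY$, then the turn $ZYZ$ is evil) shows that the bounce-back turns $A_{p+1}A_{p}A_{p+1}$ and $B_{q+1}B_{q}B_{q+1}$ are evil. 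The doubled path that runs from $A_{p}$ forward along the $A$-path, across $f$, backward along the $B$-path to $B_{q}$, and then retraces itself back to $A_{p}$ is therefore a closed loop all of whose turns are evil, contradicting rigidity via Proposition \ref{prop:absence-of-evil}; no intersection of the two descendance paths and no condition at $A_{0},B_{0}$ is needed.

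A secondary defect: your claim that any repeated vertex $A_{p}=B_{q}$ of $P$ automatically yields a closed evil sub-loop is unjustified, because the turn at which such a sub-loop closes is formed by one edge coming from the $A$-portion and one from the $B$-portion, and nothing in your setup guarantees that this junction turn is evil. So even that sub-case is not actually settled, while the generic case (a simple open path) is left entirely open, as you acknowledge. Both problems disappear once you use the bounce-back device above, which is the key idea your proposal is missing.
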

\begin{proof}
Assume to the contrary that $A_{k}=B_{\ell-1}$. The edge $f$ is
not equivalent to either $e$ or $e'$. Therefore there exist indices
$p,q$ such that $A_{p}A_{p+1}\not\to A_{p-1}A_{p}$ and $B_{q}B_{q+1}\not\to B_{q-1}B_{q}$.
It follows that \[
A_{p}A_{p+1}\cdots A_{k-1}B_{\ell-1}B_{\ell-2}\cdots B_{q}B_{q-1}\cdots B_{\ell-2}B_{\ell-1}A_{k-1}\cdots A_{p+1}A_{p}\]
is an evil loop, contradicting rigidity.
\end{proof}
These lemmas show that all the non-root edges in the support of a
rigid measure $m$ can be given an orientation. More precisely, given
a relation $e\Rightarrow f$ with $e$ a root edge, choose a descencence
path $A_{0}A_{1}\cdots A_{k}$ from $e$ to $f$, and assign $f$
the orientation $A_{k-1}A_{k}$. This will be called the \emph{orientation
of} $f$ \emph{away from the root edges.} Any common edge of two skeletons
in the support of $m$ can be oriented away from the root edges; indeed,
such an edge is not a root edge. In the proofs of the next two results,
we will be concerned with the descendants of a fixed root edge $e$,
and it will be convenient to orient the other root edges equivalent
to $e$ away from $e$. The edge $e$ can be oriented either way,
as needed.

\begin{lem}
\label{lem:two-edges}Fix a root edge in the support of a rigid measure
$m$, and suppose that two descendants $f=CX$ and $g=DX$ have orientations
pointing toward $X$. Then the turns $CXD$ and $DXC$ are not evil.
In particular, the angle between $f$ and $g$ is $60^{\circ}$, and
at least one of the edges $C'X,D'X$ opposite $f$ and $g$ has $m$
measure equal to zero.
\end{lem}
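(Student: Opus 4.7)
The plan is to argue by contradiction, in the spirit of the two preceding lemmas. Suppose the turn $CXD$ is evil (the case of $DXC$ is symmetric). Fix descendance paths $A_{0}A_{1}\cdots A_{k}$ from the root $e$ to $f$, ending in $A_{k-1}=C$ and $A_{k}=X$, and $B_{0}B_{1}\cdots B_{\ell}$ from $e$ to $g$, ending in $B_{\ell-1}=D$ and $B_{\ell}=X$. Using the freedom to orient the root edge at the start of a descendance path, I may arrange that $A_{0}=B_{0}$, so that $A_{1}=B_{1}$ as well. Let $p$ be the largest index with $A_{j}=B_{j}$ for every $j\le p$; since $C\ne D$, one has $1\le p<\min(k,\ell)$. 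Form the closed loop
\[
\mathcal{L}:\ A_{p},\ A_{p+1},\ \ldots,\ A_{k-1},\ X,\ B_{\ell-1},\ \ldots,\ B_{p+1},\ A_{p}.
\]

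Every turn of $\mathcal{L}$ at an interior $A$- or $B$-vertex is either a forward or a backward turn along a descendance path, hence evil. The turn at $X$ is evil by the standing assumption. The crucial remaining point is the divergence turn at $A_{p}$. Both $A_{p}A_{p+1}$ and $A_{p}B_{p+1}$ are results of $A_{p-1}A_{p}\to\cdot$; inspecting the two clauses of $\to$ and using $A_{p+1}\ne B_{p+1}$, the only possibility is that $A_{p+1}$ and $B_{p+1}$ lie $60^{\circ}$ off the extension of $A_{p-1}A_{p}$ on opposite sides, with the extension itself carrying zero $m$-measure. Traversing $\mathcal{L}$ in the direction that makes the turn at $A_{p}$ a $60^{\circ}$ counterclockwise turn, the geometric condition of evil case~(5) is satisfied; the two support hypotheses it requires are the edge $A_{p}A_{p+1}$ (automatic) and one additional edge at $A_{p}$, whose positivity follows from applying the balance condition~(\ref{eq:balance}) at $A_{p}$ in conjunction with the zero-measure extension just recorded. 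This exhibits an evil loop in the support of $m$, contradicting rigidity via Proposition~\ref{prop:absence-of-evil}. The symmetric argument rules out evilness of $DXC$.

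Once both turns at $X$ are known to be non-evil, the remaining conclusions follow by a direct inspection of the five evil-turn clauses. Given $C\ne D$, the only non-evil configurations for $CXD$ are $\angle CXD=60^{\circ}$ with $D$ on the counterclockwise side of $\vec{CX}$ (in which case case~(4) fails precisely when the edge $120^{\circ}$ clockwise from $\vec{CX}$ at $X$ has zero $m$-measure) or $\angle CXD=120^{\circ}$ with $D$ on the clockwise side of $\vec{CX}$. The symmetric statement for $DXC$, together with the balance condition at $X$, eliminates the $120^{\circ}$ case and identifies the zero-measure edge promised in the statement as one of the opposite edges $C'X$ or $D'X$.

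The main obstacle is the second support hypothesis for case~(5) at $A_{p}$: the balance condition only guarantees nonnegative measure for the edge in question, and the degenerate sub-case where this measure vanishes calls for either reversing the traversal direction of $\mathcal{L}$ (so that a different evil clause applies at $A_{p}$) or extending $\mathcal{L}$ further along the support-edges that balance forces into existence, until an evil loop is recovered. This combinatorial book-keeping, although analogous to the case enumeration already carried out in the proof of Proposition~\ref{prop:absence-of-evil}, is where I expect the bulk of the technical work to lie.
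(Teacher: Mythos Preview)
Your overall strategy---assume an evil turn at $X$, take descendance paths from the common root to $f$ and $g$, and splice them into an evil loop---is exactly the paper's approach. However, there are two genuine gaps.

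\textbf{The prefix case is missing.} Your claim that $C\ne D$ forces $p<\min(k,\ell)$ is false. It can happen that $p=k<\ell$: then $B_{k}=A_{k}=X$ and $B_{k-1}=A_{k-1}=C$, so the $B$-path passes through the edge $f=CX$ at step $k$ and later returns to $X$ at step $\ell$ via the distinct edge $g=DX$. In this situation there is no divergence vertex $A_{p}$ at which to close your loop. The paper handles this case separately: since the orientation hypothesis gives $f\not\to g$, the step $B_{k}B_{k+1}$ must be $XC'$ (the edge opposite $f$), and then the sub-loop $B_{k}B_{k+1}\cdots B_{\ell}$ (or its reverse) is itself evil. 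To make that work one needs the preliminary observation, established at the very start of the paper's proof but absent from yours, that under the contradiction hypothesis both opposite edges $XC'$ and $XD'$ lie in the support of $m$ and that $f\not\to g$, $g\not\to f$.

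\textbf{The divergence turn is handled more cleanly in the paper.} Your configuration analysis at $A_{p}$ is in fact correct (both successors must arise via clause~(1) of $\to$, forcing $A_{p+1},B_{p+1}$ at the two $120^{\circ}$ positions with the extension of $A_{p-1}A_{p}$ having measure zero). But the route you then take---trying to verify clause~(5) directly via the balance condition and leaving the degenerate sub-case open---is harder than necessary, and your proposed fix of ``reversing the traversal direction'' cannot work: the reversed turn at $A_{p}$ is a $60^{\circ}$ \emph{clockwise} turn, which is never evil under any support hypothesis. The paper instead invokes the general principle (stated just before the lemmas) that whenever $A_{p-1}A_{p}\to A_{p}A_{p+1}$ and $A_{p-1}A_{p}\to B_{p}B_{p+1}$, the turns $A_{p+1}A_{p}B_{p+1}$ and $B_{p+1}A_{p}A_{p+1}$ are both evil; this is what lets ``the loop or its reverse'' succeed regardless of which of $CXD$, $DXC$ was assumed evil.
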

\begin{proof}
Assume to the contrary that either $CXD$ or $DXC$ is an evil turn.
The assumed orientations imply that $f\not\to g$ and $g\not\to f$.
Since one of the edges incident to $X$ must have measure zero, it
follows $f$ and $g$ are not collinear. Moreover, the edges $f'=XC'$
and $g'=XD'$ opposite to $f$ and $g,$ respectively, must be in
the support of $m$; in the contrary case we would have $f\to g$
or $g\to f$ if the angle between $f$ and $g$ is $120^{\circ}$,
or the turn $CXD$ would not be evil if the angle is $60^{\circ}$.
Let $A_{0}A_{1}\cdots A_{k}$ and $B_{0}B_{1}\cdots B_{\ell}$ be
descendance paths from $e$ to $f$ and $g$, respectively. By assumption,
we have $A_{k-1}=C$, $B_{\ell-1}=D$, and $A_{k}=B_{\ell}=X$. If
$A_{0}\ne B_{0}$, then $A_{0}=B_{1}$, $A_{1}=B_{0}$, and clearly
\[
A_{0}A_{1}\cdots A_{k}B_{\ell-1}B_{\ell-2}\cdots B_{1}\]
or its reverse is an evil loop, contradicting the rigidity of $m$.
Thus we must have $A_{0}=B_{0}$. Let $p$ be the largest integer
such that $A_{j}=B_{j}$ for $j\le p$. If $p<\min\{ k,\ell\}$, the
loop\[
A_{p}A_{p+1}\cdots A_{k}B_{\ell-1}B_{\ell-2}\cdots B_{p}\]
or its reverse is evil. Indeed, since $A_{p-1}A_{p}\to A_{p}A_{p+1}$
and $A_{p-1}A_{p}\to B_{p}B_{p+1}$, the turns $A_{p+1}B_{p}B_{p+1}$
and $B_{p+1}B_{p}A_{p+1}$ are both evil. We conclude that $p=\min\{ k,\ell\}$.
If $p=k$, it follows that the $B_{k-1}B_{k}\cdots B_{\ell}$ is a
descendance path from $f$ to $g$. Since $f\not\to g$, we must have
$B_{k+1}=C'$, and then the loop\[
B_{k}B_{k+1}\cdots B_{\ell}\]
or its reverse is obviously evil, leading to a contradiction. The
case $p=\ell$ similarly leads to a contradiction.
\end{proof}
\begin{thm}
Let $m\in\mathcal{M}_{r}$ be a rigid measure, and $e$ a root edge
in the support of $m$. Then the collection of all descendants of
$e$ is a skeleton.
\end{thm}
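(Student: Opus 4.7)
The plan is to construct a nonzero extremal measure $\mu\in\mathcal{M}_{r}$ whose support is exactly the set $S$ of all descendants of $e$; this identifies $S$ as a skeleton and proves the theorem.

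First I would orient each descendant of $e$ away from $e$ as in the paragraph preceding Lemma~\ref{lem:two-edges}, set $\mu(e)=m(e)$, and then propagate $\mu$ along descendance. The rule at each branch point $B\in S$ is dictated by Lemma~\ref{lem:two-edges}: among the descendants of $e$ oriented toward $B$ there are at most two, and when there are two they meet at $60^{\circ}$ and share a unique ``child'' edge $h$ leaving $B$; in that case I use equation~\eqref{eq:descendant-of-two} to set $\mu(h)=\mu(f)+\mu(g)$, while if there is a single incoming descendant, each outgoing child inherits its value. The two lemmas preceding Lemma~\ref{lem:two-edges} guarantee that any two descendance paths from $e$ to a common $f$ agree at their last two vertices, and a downward induction, using the rigidity of $m$ to rule out the evil loops that a mismatch would produce, then gives well-definedness of $\mu(f)$.

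The decisive step is to verify the balance condition~\eqref{eq:balance} for $\mu$ at every branch point $B\in S$. Using the enumeration of evil turns~(1)--(5) preceding Proposition~\ref{prop:absence-of-evil} together with Lemma~\ref{lem:two-edges}, the local picture of $S$ at $B$ reduces to a short list of patterns: a single incoming descendant together with its allowed collinear or $60^{\circ}$-rotated outgoing children, or two incoming descendants meeting at $60^{\circ}$ together with their common outgoing child. The propagation rule above was designed precisely so that in each such pattern the three net-flow differences entering~\eqref{eq:balance} cancel, while the edges at $B$ on which $\mu$ vanishes are either zero edges of $m$ forced by the evil-turn conditions or edges contributed by unrelated root classes, in neither case disturbing the identity. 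I expect this case-by-case check to be the main obstacle of the proof.

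Finally, extremality is almost automatic. If $\nu\in\mathcal{M}_{r}$ satisfies $\nu\le\mu$, then the support of $\nu$ is contained in $S$, and the very same propagation rule, starting from $\nu(e)\in[0,\mu(e)]$, forces $\nu=(\nu(e)/\mu(e))\,\mu$, so $\mu$ spans an extreme ray of $\mathcal{M}_{r}$. The support of $\mu$ equals $S$ because the propagation assigns a strictly positive value to every descendant of $e$. Hence $S$ is a skeleton, as claimed.
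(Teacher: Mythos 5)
Your overall strategy coincides with the paper's: build a measure whose support is exactly the set of descendants of $e$, verify the balance condition (\ref{eq:balance}) by a local analysis at each lattice point, and conclude that this support is a skeleton. The two departures are minor. The paper defines the candidate measure globally, $\mu(f)=$ number of descendance paths from $e$ to $f$, so well-definedness is immediate and only finiteness needs an evil-loop argument, whereas your propagation definition obliges you to prove that the recursion is consistent and well-founded (no directed cycles among descendants), which you only gesture at. And the paper gets ``skeleton'' by embedding the descendants in the support of an extremal summand $m'\le m$ with $m'(e)\ne0$, while you prove extremality of $\mu$ directly; your argument is essentially the one used for the Corollary that follows the theorem, and is acceptable, but it too depends on the local classification discussed next.

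The genuine gap is that the decisive step --- the case-by-case verification of (\ref{eq:balance}) --- is asserted rather than carried out, and the list of local patterns you assert is not the right one. At a lattice point met by descendants of $e$ the possible configurations are: exactly two descendants (one incoming, one outgoing, collinear); exactly three (one incoming, two outgoing, at mutual $120^{\circ}$ angles, all three getting equal mass); or exactly four (two incoming at $60^{\circ}$ and \emph{two} outgoing, of which only one is a common child governed by (\ref{eq:descendant-of-two}), the other being a child of just one incoming edge and inheriting its mass). Your pattern ``two incoming share a unique child'' omits this second outgoing edge, so your propagation rule is under-determined exactly where the balance identity has to be checked. More seriously, you never exclude the configurations in which balance would fail outright: a vertex incident to exactly one descendant (a dead end) and a vertex incident to exactly five. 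Neither exclusion follows from Lemma \ref{lem:two-edges}; the dead-end case in particular requires the paper's separate argument that the lone edge cannot be a root edge, hence some arrow along its descendance path cannot be reversed, and traversing the path out and back produces an evil loop contradicting rigidity (Proposition \ref{prop:absence-of-evil}), while the five-edge case needs its own short orientation argument (six edges being already excluded by rigidity). Until these exclusions and the four-edge case are supplied, the balance condition, and with it the existence and extremality of $\mu$, is not established.
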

\begin{proof}
Since $m$ can be written as a sum of extremal measures, there exists
an extremal measure $m'\le m$ such that $m'(e)\ne0$. Since $f\to_{m}g$
implies that $f\to_{m'}g$, the support of $m'$ is a skeleton containing
all the descendants of $e$. Therefore it will suffice to show that
the descendants of $e$ form the support of some measure in $\mathcal{M}_{r}$.
We set $\mu(e)=1$, $\mu(f)=0$ if $f$ is not a descendant of $e$,
and for each descendant $f\ne e$ of $e$ we define $\mu(f)$ to be
the number of descendance paths from $e$ to $f$. Clearly, no edge
occurs twice in such a path; such an occurence would imply the existence
of an evil loop. Thus the number $\mu(f)$ is finite. To conclude
the proof, it suffices to show that $\mu\in\mathcal{M}_{r}$. The
support of $\mu$ is contained in the support of $m$. Therefore all
the branch points are in $\triangle_{r}$, and \[
\mu(A_{j}X{}_{j+1})=\mu(B_{j}Y{}_{j+1})=\mu(C_{j}Z{}_{j+1})=0\]
for all $j$. It remains to verify the balance conditions. Consider
a lattice point $X$ in $\triangle_{r}$. If no descendant of $e$
is incident to $X$, the six edges meeting at $X$ have $\mu$ measure
zero, and the balance condition is trivial. Otherwise, the number
of descendants of $e$ incident to $X$ can be $1,2,3,4$ or $5$;
the value $6$ is excluded by the rigidity of $m$. We first exclude
the case where this number is $1$. Assume indeed that there is only
one descendant incident to $X$, and let $A_{0}A_{1}\cdots A_{k}$
be a descendance path from $e$ with $A_{k}=X$. Since $A_{k-1}A_{k}$
has no descendants of the form $XY$, it follows that the turn $A_{k-1}A_{k}A_{k-1}$
is evil. On the other hand, since rigidity of $m$ insures that one
of the edges around $X$ has measure zero, the edge $A_{k-1}A_{k}$
is a strict decendant of some other edge $XZ$. In particular, $A_{k-1}A_{k}$
is not a root edge, and therefore it is not equivalent to $e$. It
follows that, for some $p$, we do not have $A_{p}A_{p+1}\to A_{p-1}A_{p}$,
and this implies that the turn $A_{p+1}A_{p}A_{p+1}$ is evil. Thus
\[
A_{p}A_{p+1}\cdots A_{k-1}A_{k}A_{k-1}\cdots A_{p+1}A_{p}\]
 is an evil loop, contrary to the rigidity of $m$.

Consider now the case when there are exactly two descendants of $e$
incident to $X$, call them $f$ and $g$. They cannot both point
away from $X$ since this would require the existence of a third descendant
pointing toward $X$. They cannot both point toward $X$. Indeed,
if this were the case, Lemma \ref{lem:two-edges} insures that one
of the edges opposite $f$ and $g$ has measure zero, and therefore
$f$ or $g$ has another descendant pointing away from $X$. Thus
we can assume that $f$ points toward $X$, and $g$ away from $X$,
in which case we have $f\to g$. Then $f$ and $g$ must be collinear,
and every descendance path for $g$ passes through $f$. Thus $\mu(f)=\mu(g)$,
which is the required balance condition.

Assume next that there are exactly three descendants incident to $X$.
Two of them must be noncollinear and of the form $WX\to XY$, and
the third descendant must be $WX\to XZ$, with the three edges forming
$120^{\circ}$ angles. Every descendance path for either $XY$ or
$XZ$ passes through $WX$, showing that $\mu(WX)=\mu(XY)=\mu(XZ)$,
and therefore satisfying the balance requirement at $X$.

Now, consider the case of exactly four descendants incident to $X$.
If these four edges form two collinear pairs, then two of them must
point towards $X$, and they will form an evil turn, contrary to Lemma
\ref{lem:two-edges}. Therefore we can find among the four descendants
two noncollinear edges $WX\to XY$, in which case we also have $WX\to XZ$
with these three edges forming $120^{\circ}$ angles. The fourth descendant
is not collinear with $WX$, so it makes a $60^{\circ}$ angle with
$WX$. If it points away from $X$, it must be a descendant of the
only incoming edge $WX$, and this is not possible. Therefore this
fourth edge must be $VX$ with $VX\to XY$ or $VX\to XZ$. Assume
$VX\to XY$ for definiteness. In this case, all descendance paths
for $XZ$ pass through $WX$, so that $\mu(XZ)=\mu(WX)$. On the other
hand, descendance paths for $XY$ pass either through $WX$ or through
$VX$, showing that $\mu(XY)=\mu(WX)+\mu(VX)$. The balance requirement
is again verified.

Finally, if $5$ descendants of $e$ are incident to $X$, then the
sixth edge must have mass equal to zero, and it is impossible to orient
the five edges so that every pair of incoming edges form a $60^{\circ}$
angle, and every outgoing edge is a descendant of an incoming edge.
Thus, this situation does not occur.
\end{proof}
The preceding proof shows that a rigid skeleton does not cross itself
transversely. In other words, a rigid skeleton does not contain four
edges meeting at the same point, such that they form two collinear
pairs of edges. The following figure shows the possible ways (up to
rotation) that the edges of a skeleton can meet, along with the possible
orientations. In each case, a (or the) dotted edge must have density
equal to zero.

\begin{center}

\includegraphics{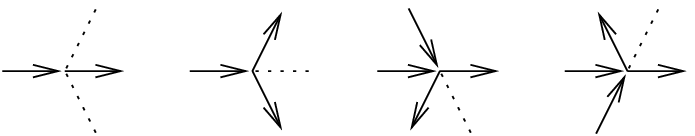}

\end{center}

The measure constructed in the above argument is the only measure
supported by the descendants of $e$ such that $\mu(e)=1$. We will
denote this measure $\mu_{e}$.

\begin{cor}
Let $m\in\mathcal{M}_{r}$ be a rigid measure, and let $e_{1},e_{2},\dots,e_{k}$
be a maximal collection of inequivalent root edges. Then we have\[
m=\sum_{j=1}^{k}m(e_{j})\mu_{e_{j}}.\]

\end{cor}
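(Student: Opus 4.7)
Set $m' = \sum_{j=1}^{k} m(e_j)\mu_{e_j}$, which lies in $\mathcal{M}_r$ by the preceding theorem. The plan is to show $m = m'$ by verifying the two measures agree on every small edge in the support of $m$, using an induction along the descendance preorder restricted to that support.

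For the base case consider a root edge $f$ of $m$. There is a unique index $j$ with $f \in [e_j]$, and inequivalence of the classes gives $\mu_{e_i}(f) = 0$ for $i \neq j$. Moreover $m(f) = m(e_j)$, since any descendance chain $e_j \to \cdots \to f \to \cdots \to e_j$ is weakly $m$-increasing along each arrow by the strict version of equation (\ref{eq:descendant-of-two}) and so must preserve $m$-mass throughout. Thus $m(f) = m'(f)$ reduces to the assertion $\mu_{e_j}(f) = 1$, i.e., there is a unique descendance path from $e_j$ to $f$ inside $[e_j]$. I would prove this by extending the lemma that two descendance paths with the same source and target share their last two vertices, iterating it through the class by stripping off common terminal edges. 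The iteration is legitimate because the strict-inequality part of (\ref{eq:descendant-of-two}) forces every arrow inside an equivalence class to fit into only a few local patterns, and the absence of evil loops and of six-edge lattice vertices guaranteed by rigidity excludes the remaining configurations that would let two paths branch apart and rejoin.

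For the inductive step fix a non-root edge $f$ in the support of $m$, and assume $m'(g) = m(g)$ for every strict descendance-ancestor $g$ of $f$. Every descendance path from $e_j$ to $f$ terminates with a step $g \to f$, so $\mu_{e_j}(f) = \sum_{g \to f} \mu_{e_j}(g)$, whence $m'(f) = \sum_{g \to f} m(g)$ by the induction hypothesis after multiplying by $m(e_j)$ and summing over $j$. On the side of $m$, the branch-configuration catalogue following the preceding theorem (the figure showing how edges of a rigid skeleton may meet at a lattice point with their admissible orientations) together with equation (\ref{eq:descendant-of-two}) gives the matching identity $m(f) = \sum_{g \to f} m(g)$: either $f$ has a single predecessor $g$ with $f \to g$ as well, and then $m(f) = m(g)$ by equality within the class, or $f$ has two strict predecessors $g_1, g_2$ and then $m(f) = m(g_1) + m(g_2)$ is exactly (\ref{eq:descendant-of-two}). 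Hence $m(f) = m'(f)$ and the induction closes.

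The principal obstacle is the base case, namely the uniqueness of the descendance path between two equivalent root edges. The tools are the strict-inequality clause of (\ref{eq:descendant-of-two}) (which fixes the local picture around any arrow inside a class) together with rigidity (no evil loops, no six-edge vertices); combining these into a clean statement requires a careful case analysis of how the descendance arrow graph restricted to a single equivalence class can be laid out on the triangular lattice. Once this structural fact is in hand, the inductive step reduces to bookkeeping at each branch point driven by the finite list of admissible branchings.
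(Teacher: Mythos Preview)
Your overall plan (induct along the descendance preorder and compare $m$ with $m'$) matches the paper's, but both steps have problems as written.

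\textbf{The base case is not an obstacle.} You want $\mu_{e_j}(f)=1$ for every root edge $f\Leftrightarrow_m e_j$. There is no need to analyse uniqueness of descendance paths inside the class. The support of $\mu_{e_j}$ is contained in the support of $m$, so every relation $e\to_m f$ is also a relation $e\to_{\mu_{e_j}}f$; hence $\mu_{e_j}$ is constant on each $\Leftrightarrow_m$-class, giving $\mu_{e_j}(f)=\mu_{e_j}(e_j)=1$ immediately. The paper does not even need this: it only checks $m'(e_j)=m(e_j)$, which follows from $\mu_{e_i}(e_j)=\delta_{ij}$ (inequivalent roots are not descendants of one another).

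\textbf{The inductive formula is incorrect.} The identity $m(f)=\sum_{g\to f}m(g)$ fails already for a single rigid skeleton at a four-edge branch point. In the configuration analysed in the proof of the preceding theorem (incoming edges $WX,VX$, outgoing $XY,XZ$ with $WX,XY,XZ$ at $120^\circ$), all three of $WX$, $VX$ and the reversal $ZX$ of $XZ$ satisfy $g\to_m XY$, yet balance gives $m(XY)=m(WX)+m(VX)$, not the three-term sum. The same over-count spoils $\mu_{e_j}(f)=\sum_{g\to f}\mu_{e_j}(g)$: paths to $XZ$ end with $XZ$ oriented \emph{away} from $X$ and do not extend to paths to $XY$. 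Your dichotomy ``one bidirectional predecessor or two strict predecessors'' is therefore false, and the catalogue you cite describes branchings of a \emph{single} skeleton, not of a general rigid $m$ (which may have five edges at a point).

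The paper avoids this by never summing over predecessors. It observes that any measure $\mu\in\mathcal{M}_r$ with support contained in that of $m$ obeys the \emph{same} instance of (\ref{eq:descendant-of-two}): for a non-root $f$, pick the penultimate edge $g$ on a longest descendance path to $f$; then $g\to f$, $f\not\to g$, and (\ref{eq:descendant-of-two}) yields $\mu(f)=\mu(g)+\mu(g')$ with $g,g'$ of strictly smaller height. Since $m$ and $m'$ both satisfy this recursion and agree on the $e_j$, they coincide. Replacing your summation by this single application of (\ref{eq:descendant-of-two}) fixes the inductive step; together with the one-line base case above, the argument then goes through and is essentially the paper's.
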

\begin{proof}
Let us say that an edge $f$ in the support of $m$ has height $\ge p\ge2$
if there exists a descendance path $A_{0}A_{1}\cdots A_{p}$ from
some root edge $e$ to $f$. If $f$ has height $\ge p$, we say that
$f$ has height equal to $p$ if it does not have height $\ge p+1$.
The requirement that $m\in\mathcal{M}_{r}$ shows that the measure
of any edge can be calculated in terms of the measures of edges of
smaller height, as can be seen from (\ref{eq:descendant-of-two}).
Therefore, $m$ is entirely determined by the values $m(e_{j})$,
$j=1,2,\dots k.$ On the other hand, the measure $m'=\sum_{j=1}^{k}m(e_{j})\mu_{e_{j}}$
has support contained in the support of $m$. Since $m'(e_{j})=m(e_{j}),$
we conclude that $m=m'$.
\end{proof}
We mention one more useful property of rigid skeletons.

\begin{lem}
\label{lemma:evil-paths}Let $e$ and $f$ be two edges in a rigid
skeleton. There exists a path $C_{0}C_{1}\cdots C_{p}$ in this skeleton
such that $C_{0}C_{1}=e$, $C_{p-1}C_{p}=f$ , and all the turns $C_{j-1}C_{j}C_{j+1}$
and $C_{j+1}C_{j}C_{j-1}$ are evil.
\end{lem}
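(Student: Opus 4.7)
The plan is to connect the given edges by concatenating two descendance paths, exploiting the observation recorded just before the lemma that every turn in a descendance path is evil in both directions. Since the skeleton is the support of an extremal rigid measure, the corollary just proved implies that any edge in the skeleton is a descendant of any chosen root edge. So I would fix a root edge $e_{0}$ and choose descendance paths $P_{e}=A_{0}A_{1}\cdots A_{k}$ from $e_{0}$ to $e$ and $P_{f}=B_{0}B_{1}\cdots B_{\ell}$ from $e_{0}$ to $f$, with $A_{0}A_{1}=B_{0}B_{1}=e_{0}$, $A_{k-1}A_{k}=e$, and $B_{\ell-1}B_{\ell}=f$.

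Next, I would let $s$ be the largest index at which $A_{j}=B_{j}$ and form the combined sequence
\[
A_{k},\ A_{k-1},\ \ldots,\ A_{s+1},\ A_{s}=B_{s},\ B_{s+1},\ \ldots,\ B_{\ell}.
\]
Every turn strictly inside either half of this sequence is doubly evil, since these are turns in a descendance path (possibly reversed), and the doubly-evil property is symmetric in forward and backward traversal of a descendance path. The only turn that needs separate checking is the one at the divergence vertex $A_{s}=B_{s}$, between the edges $A_{s}A_{s+1}$ and $A_{s}B_{s+1}$.

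The main work is to show that this divergence turn is evil in both directions. By the descendance relations $A_{s-1}A_{s}\to A_{s}A_{s+1}$ and $A_{s-1}A_{s}\to A_{s}B_{s+1}$, both outgoing edges make a $120^{\circ}$ interior angle with the incoming edge $A_{s-1}A_{s}$ on opposite sides, so the walk turn at $A_{s}$ in the combined sequence is $60^{\circ}$. To verify the evil conditions in both traversal directions, I would use the rigidity of the skeleton: the absence of six edges meeting at $A_{s}$ and of evil loops, combined with the balance condition at $A_{s}$, forces enough additional edges at $A_{s}$ into the support for the extra-edge requirements of cases (4) and (5) of the evil turn definition to be met. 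In any awkward configuration where a direct argument stalls, the plan is either to choose a deeper common vertex (by extending the descendance paths further before splitting) or to insert a short detour through a neighboring branch point, guided by Lemma \ref{lem:two-edges} and the description of the admissible branch configurations established in the previous theorem, so that every turn in the final path is doubly evil. I expect this verification at the divergence vertex to be the main obstacle; everything else is bookkeeping with the descendance machinery.
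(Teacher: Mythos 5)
Your construction is the same as the paper's: concatenate the reversed descendance path from a common root edge to $e$ with the descendance path to $f$, cut at the vertex where they diverge, and invoke the fact that every turn of a descendance path is evil in both directions. Two details should be fixed to match what is actually needed. First, the splitting index must be the \emph{first} index at which the two paths differ (the end of the common initial segment), not the largest index at which they happen to agree: only the former guarantees that $A_{s}A_{s+1}$ and $A_{s}B_{s+1}$ have the common immediate ancestor $A_{s-1}A_{s}$ on which your analysis of the divergence vertex relies (descendance paths may revisit a vertex later without sharing their earlier portions). Second, you must also handle the case where the two descendance paths traverse the root edge in opposite directions ($A_{0}=B_{1}$, $A_{1}=B_{0}$), in which there is no common prefix at all; there the concatenated path passes through the root edge itself and every turn is literally a turn of one of the two descendance paths, so nothing new needs checking, but your setup silently excludes this case.

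The genuine gap is exactly the step you flag as the main obstacle, and the mechanism you propose for closing it would not work. Rigidity (no six-edge vertices, no evil loops) and the balance condition do \emph{not} force any additional support edges at $A_{s}$: a plain three-valent branch point, with only three support edges at mutual $120^{\circ}$, occurs in the simplest rigid skeletons, and there are no extra edges there to be forced -- yet the lemma must hold at such a vertex. The point is that no extra edges are needed. Since $A_{s-1}A_{s}\to A_{s}A_{s+1}$ and $A_{s-1}A_{s}\to A_{s}B_{s+1}$, neither outgoing edge can be the collinear continuation of $A_{s-1}A_{s}$ (the definition of $\to$ would force that continuation to have measure zero, whereas every edge on a descendance path carries positive measure), so the two outgoing edges flank the incoming edge at $120^{\circ}$ on opposite sides; checking the definition of an evil turn for this configuration, one traversal direction is evil unconditionally and the other is evil because the witness edge it requires is precisely the common ancestor $A_{s-1}A_{s}$, which lies in the support and indeed on your path. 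This direct verification is what completes the paper's (terse) argument. Your fallback manoeuvres -- choosing "a deeper common vertex by extending the descendance paths further before splitting" (the divergence point of two descendance paths with fixed endpoints is not at your disposal) or "inserting a short detour through a neighboring branch point" -- are not well-defined arguments and are unnecessary once the divergence turn is checked as above.
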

\begin{proof}
The result is obvious if $e$ is a root edge. If it is not, choose
descendance paths $A_{0}A_{1}\cdots A_{k}$ from a root edge to $e$,
and $B_{0}B_{1}\cdots B_{\ell}$ from the same root edge to $f$.
If $A_{0}=B_{1}$, the path\[
A_{k}A_{k-1}\cdots A_{1}B_{1}B_{2}\cdots B_{\ell}\]
satisfies the requirements. If $A_{0}=B_{0}$ one chooses instead
the path\[
A_{k}A_{k-1}\cdots A_{r+1}B_{r}B_{r-1}\cdots B_{\ell},\]
where $r$ is the first integer such that $A_{r+1}\ne B_{r+1}$. If
no such integer exists, one of the paths is contained in the pther.
For instance, $k<\ell$ and $A_{j}=B_{j}$ for $j\le k$. In this
case the desired path is $B_{k-1}B_{k}\cdots B_{\ell}$.
\end{proof}
The path provided by this lemma is not generally a descendance path.

We conclude this section by introducing an order relation on the set
of skeletons contained in the support of a rigid measure $m$. Given
two rigid skeletons $S_{1}$ and $S_{2}$, we will write $S_{1}\prec_{0}S_{2}$
if $S_{1}$ has collinear edges $AX,XB$ and $S_{2}$ has collinear
edges $CX,XD$ such that $XA$ is $60^{\circ}$ clockwise from $XC$.
The following figure shows the four possible configurations of $S_{1}$
and $S_{2}$ around the point $X$, up to rotation, and assuming that
the two skeletons are contained in the support of a rigid measure.
The edges in $S_{1}\setminus S_{2}$ are dashed, the edges in $S_{2}\setminus S_{1}$
are solid without arrows, and the common edges are oriented away from
the root edges.

\begin{center}

\includegraphics[scale=0.8]{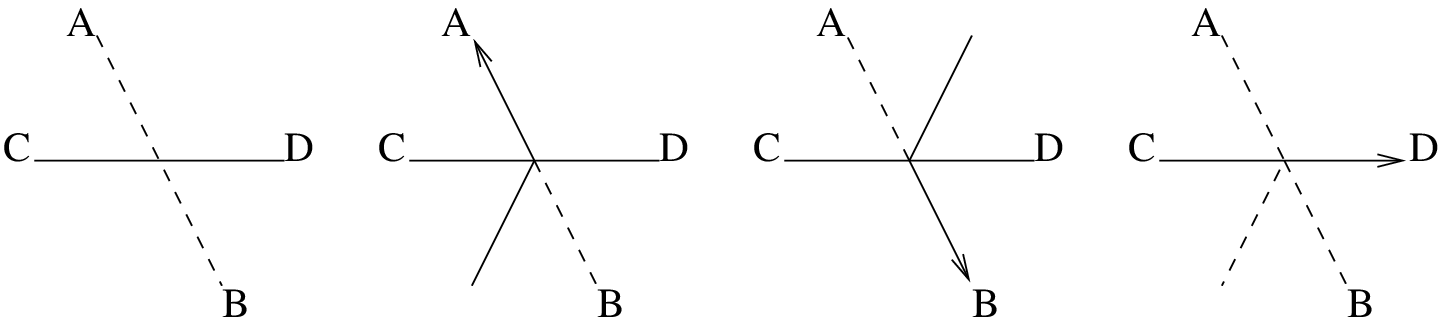}

\end{center}The four turns $AXC,AXD,BXC,$ and $BXD$ are evil. 

Note that the point $X$ could be on the boundary of $\triangle_{r}$,
but not one of the three corner vertices. It is possible that $S_{1}\prec_{0}S_{2}$
and $S_{2}\prec_{0}S_{1}$, as illustrated in the picture below (with
$S_{1}$ in dashed lines).\begin{center} \includegraphics[scale=0.7]{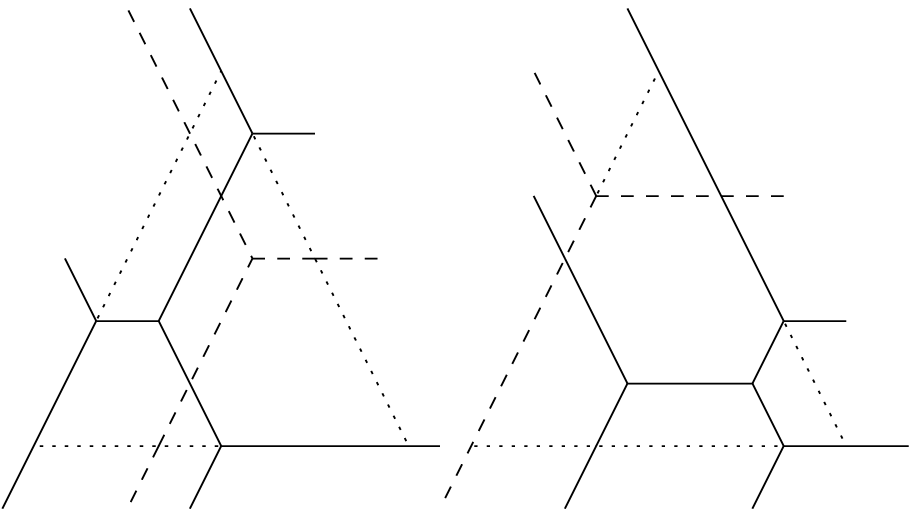}

\end{center} We will show that this does not occur when the skeletons are associated
with a fixed rigid measure.

\begin{thm}
Fix a rigid measure $m\in\mathcal{M}_{r}$ and an integer $n\ge1$.
There do not exist skeletons $S_{1},S_{2},\dots,S_{n}$ contained
in the support of $m$ such that\[
S_{1}\prec_{0}S_{2}\prec_{0}\cdots\prec_{0}S_{n}\prec_{0}S_{1}.\]

\end{thm}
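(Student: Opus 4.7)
My plan is to argue by contradiction: assume the existence of a cycle $S_1 \prec_0 S_2 \prec_0 \cdots \prec_0 S_n \prec_0 S_1$ of skeletons in the support of $m$, and build from it an evil loop, contradicting the rigidity of $m$ via Proposition \ref{prop:absence-of-evil}.

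The case $n=1$ is special: the relation $S_1 \prec_0 S_1$ would force $S_1$ to contain four edges meeting at a single lattice point in two collinear pairs at $60^\circ$. But the discussion earlier in this section (just before the definition of $\prec_0$) observed that a rigid skeleton cannot cross itself transversely, so this is ruled out.

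For $n \ge 2$, let $X_i$ (indices modulo $n$) denote the lattice point witnessing $S_i \prec_0 S_{i+1}$, and retain the notation of the definition of $\prec_0$: $S_i$ contains collinear edges $A_iX_i, X_iB_i$, while $S_{i+1}$ contains collinear edges $C_iX_i, X_iD_i$, with $X_iA_i$ oriented $60^\circ$ clockwise from $X_iC_i$. The discussion immediately preceding the theorem statement guarantees that all four turns $A_iX_iC_i$, $A_iX_iD_i$, $B_iX_iC_i$, $B_iX_iD_i$ at $X_i$ are evil. I will construct, for each $i$, an evil walk $\gamma_i$ inside the skeleton $S_{i+1}$ starting at $X_i$ along one of the edges $X_iC_i, X_iD_i$ and ending at $X_{i+1}$ along one of the edges $A_{i+1}X_{i+1}, X_{i+1}B_{i+1}$. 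The concatenation $\gamma_1\gamma_2\cdots\gamma_n$ is then a closed walk whose internal turns (inside each $\gamma_i$) are evil by Lemma \ref{lemma:evil-paths}, and whose junction turns at each $X_i$ (from the terminal $S_i$-edge of $\gamma_{i-1}$ to the initial $S_{i+1}$-edge of $\gamma_i$) are among the four evil turns listed above. The resulting evil loop contradicts Proposition \ref{prop:absence-of-evil}.

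The walks $\gamma_i$ are furnished by Lemma \ref{lemma:evil-paths} applied in $S_{i+1}$, with $e$ chosen from $\{X_iC_i, X_iD_i\}$ and $f$ from $\{A_{i+1}X_{i+1}, X_{i+1}B_{i+1}\}$. The main obstacle I foresee is an orientation bookkeeping issue: Lemma \ref{lemma:evil-paths} produces a walk whose first and last edges match $e$ and $f$, but the starting vertex (one of the two endpoints of $e$) is fixed by the descendance-path construction inside the proof rather than chosen freely, and likewise for the terminal vertex. To guarantee that the walk begins at $X_i$ and ends at $X_{i+1}$, I would combine the four available pairs $(e,f)$ with the observation that reversing an evil walk yields an evil walk (the lemma's turn condition is symmetric in direction). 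If this flexibility proves insufficient in corner cases, I would strengthen Lemma \ref{lemma:evil-paths} to allow prescribing the starting and ending vertices of the walk, an enhancement whose proof follows the same case analysis on descendance paths from a common root edge but coordinates orientations at both endpoints.
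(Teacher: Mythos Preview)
Your strategy matches the paper's: assume a cycle, use Lemma~\ref{lemma:evil-paths} to thread an evil path through each $S_j$ between consecutive crossing points, and concatenate to an evil loop. The $n=1$ case is fine as you handle it.

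The gap is exactly the ``orientation bookkeeping'' you flag, and your proposed fixes are not quite enough. Trying all four pairs $(e,f)$ and reversing does not give independent control of the two endpoints: reversal swaps both simultaneously, and Lemma~\ref{lemma:evil-paths} fixes the orientation of \emph{each} endpoint by the descendance structure, so you may well get a path that starts at $C_i$ (not $X_i$) and ends at $B_{i+1}$ (not $X_{i+1}$) for every choice. Strengthening the lemma to allow free choice of endpoint is also not straightforward, since the orientation there is genuinely dictated by how descendance paths arrive.

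The paper's resolution is to make a \emph{specific} choice of edges using the $\to$ relation. At the vertex $X_j$ one of the two collinear edges $A_jX_j,X_jB_j$ precedes the other under $\to$; call the earlier one $f_j$, so $f_j\to f_j'$. The point is that $f_j$ is then not a descendant of any other edge incident to $X_j$ (except possibly $f_j'$ itself), so any descendance path reaching $f_j$ must arrive at $X_j$ along $f_j'$ or terminate there. Dually, on the other side choose $e_j$ so that $e_j'\to e_j$. With these choices, the path furnished by Lemma~\ref{lemma:evil-paths} either already starts at $X_{j-1}$, or its first two edges are $C_{j-1}X_{j-1}D_{j-1}$ (or $D_{j-1}X_{j-1}C_{j-1}$), in which case dropping the first edge leaves a path starting at $X_{j-1}$; the analogous trimming works at the terminal end. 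This single-edge trim is the missing mechanism that your proposal needs.
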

\begin{proof}
Assume to the contrary that such skeletons exist. Choose for each
$j$ collinear edges $A_{j}X_{j},X_{j}B_{j}$ in the support of $S_{j}$
and collinear edges $C_{j}X_{j},X_{j}D_{j}$ in the support of $S_{j+1}$
(with $S_{n+1}=S_{1}$) such that $XA_{j}$ is $60^{\circ}$ clockwise
from $XC_{j}$. The rigidity of $m$ implies that one of the edges
$XY$ has measure zero. Therefore we must have either $X_{j}A_{j}\to X_{j}B_{j}$
or $X_{j}B_{j}\to X_{j}A_{j}$. Label these two edges $f_{j}$ and
$f_{j}'$ so that $f_{j}\to f'_{j}$, and note that $f_{j}$ is not
the descendant of any edge $XY$, except possibly $f'_{j}$. Analogously,
denote the edges $X_{j-1}C_{j-1}$ and $X_{j-1}D_{j-1}$ by $e_{j}$
and $e'_{j}$ so that $e_{j}'\to e{}_{j}$. Since both $e_{j}$ and
$f_{j}$ are contained in $S_{j}$, Lemma \ref{lemma:evil-paths}
provides a path with evil turns joining $e_{j}$ and $f_{j}$. A moment's
thought shows that this path either begins at $X_{j-1}$, or it begins
with one of $C_{j-1}X_{j-1}D_{j-1},D_{j-1}X_{j-1}C_{j-1}$. If the
second alternative holds, remove the first edge from the path. Performing
the analogous operation at the other endpoint, we obtain a path $\gamma_{j}$
with only evil turns which starts at $X_{j-1}$ ends at $X_{j}$ (with
$X_{j-1}=X_{n}$ if $j=0$), its first edge is one of $X_{j-1}C_{j-1},X_{j-1}D_{j-1}$,
and its last edge is one of $A_{j}X_{j},B_{j}X_{j}$. As noted above,
the turn formed by the last edge of $\gamma_{j}$ and the first edge
of $\gamma_{j+1}$ is evil. We conclude that the loop $\gamma_{1}\gamma_{2}\cdots\gamma_{n}$
is evil, contradicting rigidity.
\end{proof}
The preceding result shows that there is a well-defined order relation
on the set of skeletons in the support of a rigid measure $m$ defined
as follows: $S\prec S'$ if there exist skeletons $S_{1},S_{2},\dots,S_{k}$
such that \[
S=S_{1}\prec_{0}S_{2}\prec_{0}\cdots\prec_{0}S_{k}=S'.\]
The following figure shows the support of a rigid measure $m\in\mathcal{M}_{6}$.
The elements of a maximal collection of mutually inequivalent root
edges have been indicated with dots.

\begin{center}

\includegraphics{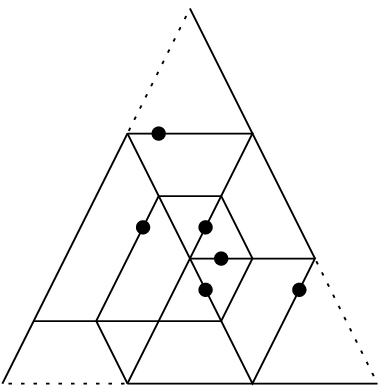}

\end{center}

For this measure, there is a smallest skeleton (relative to $\prec$)
pictured below. The reader can easily draw all the other skeletons
and determine the order relation.\begin{center}

\includegraphics{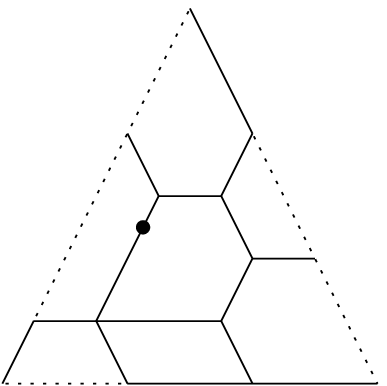}\end{center}

\section{Horn Inequalities and Clockwise Overlays}

The results of \cite{KT,KTW} show that the triples $(\alpha,\beta,\gamma)=\partial m$
with $m\in\mathcal{M}_{r}$ are precisely those triples of increasing
nonegative vectors in $\mathbb{R}^{r}$ with the property that there
exist selfadjoint matrices $A,B,C\in M_{r}(\mathbb{C})$ such that
\[
\lambda_{A}(\ell)=\alpha_{r+1-\ell},\lambda_{B}(\ell)=\beta_{r+1-\ell},\lambda_{C}(\ell)=\gamma_{r+1-\ell},\quad\ell=1,2,\dots,r,\]
and $A+B+C$ is a multiple of the identity, namely, $2\omega(m)1_{\mathbb{C}^{r}}$.
The Horn inequalities for these matrices are \[
\sum_{i\in I}\lambda_{A}(i)+\sum_{j\in J}\lambda_{B}(j)+\sum_{k\in K}\lambda_{C}(k)\le2s\omega(m)\]
when $I,J,K\subset\{1,2,\dots,r\}$ have cardinality $s$ and $c_{IJK}>0$.
Applying this inequality to the matrices $-A,-B,-C$ instead and switching
signs, we obtain\[
\sum_{i\in I}\lambda_{A}(r+1-i)+\sum_{j\in J}\lambda_{B}(r+1-j)+\sum_{k\in K}\lambda_{C}(r+1-k)\ge2s\omega(m)\]
Equivalently,\[
\sum_{i\in I}\alpha_{i}+\sum_{j\in J}\beta_{j}+\sum_{k\in K}\gamma_{k}\ge2s\omega(m).\]
Now, the sets $I,J,K$ are obtained from some measure $\nu\in\mathcal{M}_{s}$
with weight $r-s$, and we will see how this inequality follows from
the superposition of the support of $m$ and the puzzle associated
with $\nu$. Let $h$ be the honeycomb provided by Lemma \ref{lem:Honeycomb-from-m}.
Let $D\subset\triangle_{r}$ be a region bounded by small edges, and
let $X_{j}Y_{j},j=1,2,\dots,p$ be an enumeration of the edges of
$\partial D$, oriented so that $D$ lies on the left of $X_{j}Y_{j}$.
For each $j$, there is $\varepsilon_{j}=\varepsilon_{X_{j}Y_{j}}=\pm1$
such that $Y_{j}-X_{j}\in\{\varepsilon_{j}u,\varepsilon_{j}v,\varepsilon_{j}w\}$.
The definition of honeycombs implies then the identity\[
\sum_{XY\subset\partial D}\varepsilon_{XY}h(XY)=\sum_{j=1}^{p}\varepsilon_{j}h(X_{j}Y_{j})=0,\]
which is easily deduced by induction on the size of $D$. We would
like to verify that the sum\[
S=\sum_{i\in I}h(A_{i-1}A_{i})+\sum_{j\in J}h(B_{j-1}B_{j})+\sum_{k\in K}h(C_{k-1}C_{k})\]
is nonnegative, where $I,J,K$ are given by a measure $\nu\in\mathcal{M}_{s}$
with $\omega(\nu)=r-s$. The inflation of the measure $\nu$ yields
a partition of $\triangle_{r}$ into white pieces (the translated
parts of $\triangle_{s}$), gray parallelograms, and light gray pieces.
Denote by $D$ the union of the gray parallelograms $P_{1},P_{2},\dots,P_{\sigma}$
and white pieces $W_{1},W_{2},\dots,W_{\tau}$. The edges $A_{i-1}A_{i},i\in I$,
$B_{j-1}B_{j},j\in J$, and $C_{k-1}C_{k},k\in K$ are contained in
the intersection $\partial D\cap\partial\triangle$. Since\[
\sum_{XY\subset\partial D}\varepsilon_{XY}h(XY)=\sum_{XY\subset\partial W_{\ell}}\varepsilon_{XY}h(XY)=0,\quad\ell=1,2,\dots,\tau,\]
and\[
S=\sum_{{\rm white}\, XY\subset\partial D\cap\partial\triangle_{r}}\varepsilon_{XY}h(XY),\]
we have\begin{eqnarray*}
S & = & S-\sum_{XY\subset\partial D}\varepsilon_{XY}h(XY)+\sum_{\ell=1}^{\tau}\sum_{XY\subset\partial W_{\ell}}\varepsilon_{XY}h(XY)\\
 & = & -\sum_{\ell=1}^{\sigma}\sum_{\text{gray }XY\subset\partial P_{\ell}}\varepsilon_{XY}h(XY),\end{eqnarray*}
where the last sums are taken over the light gray edges of $P_{\ell}$.
Condition (3) implies that\[
-\sum_{\text{gray }XY\subset\partial P_{\ell}}\varepsilon_{XY}h(XY)=\sum_{e}m(e),\]
 where the sum is taken over all small edges contained in $P_{\ell}$
which are not parallel to the sides of $P_{\ell}$. This immediately
implies the desired Horn inequality, and it also tells us when equality
is attained: this happens if and only if all the edges $e\subset P_{\ell}$
for which $m(e)>0$ are parallel to the edges of $P_{\ell}$, $\ell=1,2,\dots,s$.
In other words, the support of $m$ must cross each $P_{\ell}$ along
lines parallel to the edges of $P_{\ell}$. The following figure illustrates
the support of a measure $\nu$, the inflation of $\nu$, and the
support of a measure $m$ which satisfies the Horn equality associated
with $\nu$. In this example the support of the measure $m$ never
crosses the white puzzle pieces. \begin{center}

\includegraphics[scale=0.5]{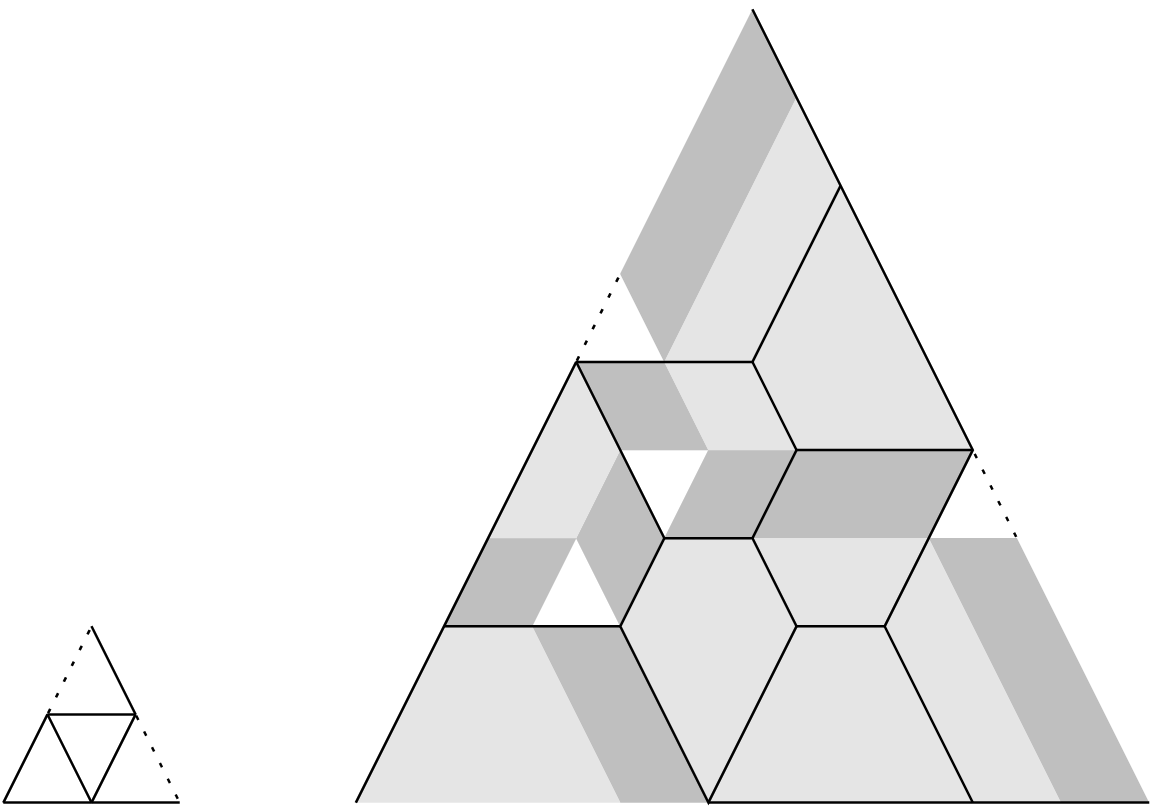}

\end{center}The next example involves basically the mirror image of the measure
$m$, and its support never crosses the light gray pieces. \begin{center}

\includegraphics[scale=0.5]{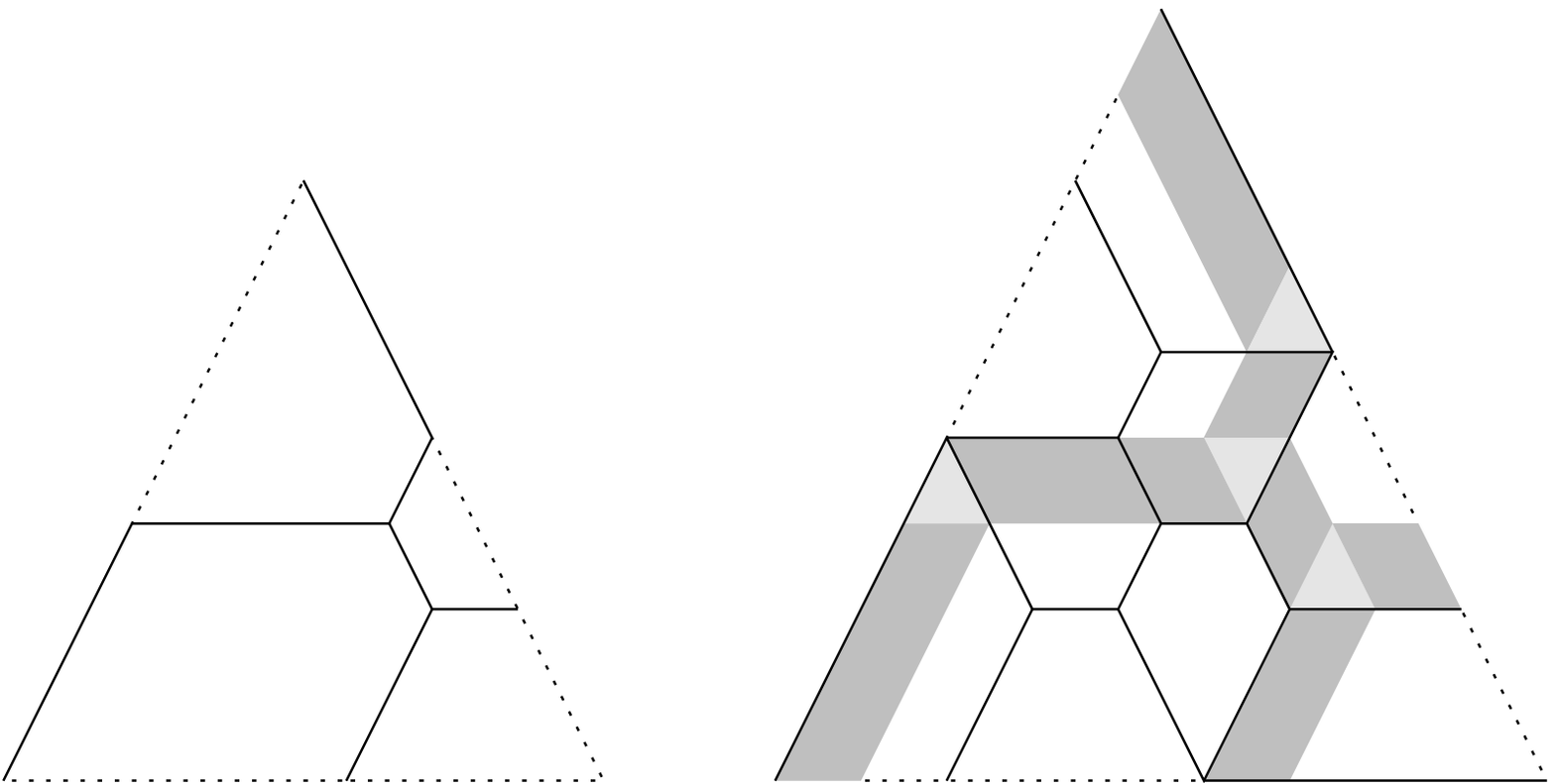}

\end{center}This phenomenon is related with the fact that the support of $m$
is actually a rigid skeleton in both cases. A rigid skeleton does
not cross itself transversely. Thus, in a case of equality, it cannot
have any branch points in the interior of a gray parallelogram. It
follows then that the skeleton always crosses these parallelograms
between white pieces or between gray pieces, but not both.

Assume that we are in a case of equality\[
\sum_{i\in I}h(A_{i-1}A_{i})+\sum_{j\in J}h(B_{j-1}B_{j})+\sum_{k\in K}h(C_{k-1}C_{k})=0.\]
 In this case, we can define a measure $\mu\in\mathcal{M}_{s}$ by
moving the support of $m$ to $\triangle_{s}$ in the following way:
those parts which are contained in white puzzle pieces are simply
translated back to $\triangle_{s}$ (along with their densities);
the segments in the support of $m$ which cross between white pieces
are deleted; the segments which cross between light gray pieces are
replaced by the coresponding parallel sides of white pieces, and the
density is preserved. It may be that several segments cross between
light gray pieces, in which case the density of the corresponding
side of a white piece is the sum of their densities. When the measure
$\mu$ can be obtained using this procedure, we will say that $\mu$
is obtained by \emph{contracting} $m$, and that $\mu$ is \emph{clockwise}
from $\nu$ (or that $(\mu,\nu)$ form a \emph{clockwise overlay})\emph{.}
This is easily seen to be an extension of the notion of clockwise
overlay introduced in \cite{KTW} (see also item (1), second case,
in the proof of Theorem \ref{thm:stretching}). Generally, a clockwise
overlay $(\mu,\nu)$ can be obtained by shrinking more than one measure
$m$. Indeed, the shrinking operation loses all the information about
the branch points of $m$ in the light gray puzzle pieces.

In the first case illustrated above, the support of the measure $\mu$
is actually contained in the support of $\nu$; this is what happens
when the support of $m$ does not cross the white pieces. In the second
case illustrated above we obtain the following figure for the supports
of $\nu$ and $\mu$. \begin{center}

\includegraphics[scale=0.5]{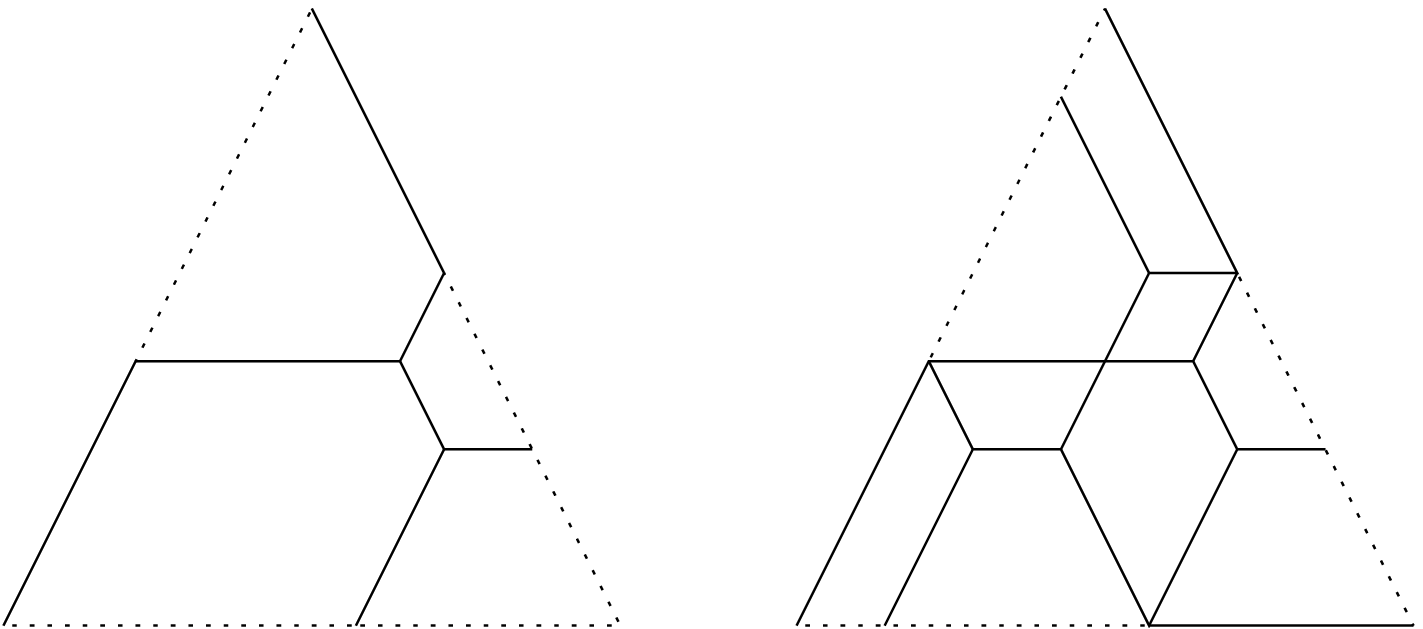}

\end{center} 

We will need one more important property of clockwise overlays.

\begin{prop}
\label{prop:support-of-clockwise-measure}Let $(\mu,\nu)$ be a clockwise
overlay obtained by contracting a measure $m$. Then $\omega(\mu)=\omega(m)$,
and $\mu^{*}\le m^{*}$.
\end{prop}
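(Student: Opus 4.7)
The plan is to exploit the explicit structure of the contraction. The very definition of a clockwise overlay rests on the Horn equality discussed just before the proposition, and that equality forces the support of $m$ inside each gray parallelogram $P_\ell$ of the inflation of $\nu$ to consist entirely of edges parallel to the sides of $P_\ell$. First I would record this fact and observe that $m|P_\ell$ splits into two bundles of parallel segments, ``white-parallel'' (crossing between white puzzle pieces) and ``light-gray-parallel'' (crossing between light gray pieces). The contraction discards the white-parallel bundles and replaces each light-gray-parallel segment by the parallel white side of the adjacent white piece, summing densities when several segments collapse to the same edge. Along the way, I would verify $\mu\in\mathcal{M}_s$: the balance condition at an interior lattice vertex of $\triangle_s$ is inherited from the $m$-balance at the corresponding lattice point of $\triangle_r$ when that point lies inside a white piece, and at vertices straddling the shared edge of two adjacent white pieces it follows from combining the $m$-balances at the two endpoints of the traversed light gray side of some $P_\ell$ with the absorbed light-gray-parallel crossings.

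To establish $\omega(\mu)=\omega(m)$, I would compute each side as the total external $w$-direction mass on the $A$-side of the respective triangle. Every vertex of $\triangle_s$ on the $A$-side corresponds, via the inflation of $\nu$, to a vertex of $\triangle_r$ that is either a vertex of a boundary white piece (whose external $w$-edge mass for $m$ passes directly to that of $\mu$) or an endpoint of a light gray side of a boundary gray parallelogram (where the light-gray-parallel $m$-segments internal to that parallelogram contribute their densities to the external $\mu$-edge via the replacement rule). Using the fact that $m$-densities are constant along the outward half-lines, a direct count shows every $w$-direction external $m$-edge on the $A$-side of $\triangle_r$ is matched exactly once, and $\omega(\mu)=\omega(m)$ follows.

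Given the weight equality, both $\mu^*$ and $m^*$ live in $\mathcal{M}^*_{\omega(m)}$ on triangles of the same size and may be compared pointwise after the canonical affine identification. For the inequality $\mu^*\le m^*$, I would unwind the $^*$-deflation on both puzzles. Each gray parallelogram of the puzzle of $\mu$ corresponds to a small edge of $\mathrm{supp}(\mu)$ and, tracing through the contraction, comes either from a small edge of $\mathrm{supp}(m)$ sitting inside a white piece of $\nu$'s inflation or from one or more light-gray-parallel crossings inside a gray parallelogram of $\nu$'s inflation; either way the corresponding gray parallelograms of the puzzle of $m$ survive $^*$-deflation and contribute at least the same white-side length, which becomes the density on the matched edge of $m^*$. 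The gray parallelograms of $m$'s puzzle coming from the discarded white-parallel crossings contribute the additional mass $m^*-\mu^*\ge 0$. Summing over all edges gives the required pointwise bound.

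The main obstacle is the last step: the $^*$-deflated triangles of $\mu$ and $m$ are of equal size $\omega(m)$ but arise from puzzles of different sizes $s+\omega(m)$ and $r+\omega(m)$, so establishing the correct natural identification and then checking edge-by-edge that every gray parallelogram of the puzzle of $\mu$ can be traced inside a gray parallelogram of the puzzle of $m$ with no loss of white-side length requires careful combinatorial bookkeeping against the light-gray-crossing structure.
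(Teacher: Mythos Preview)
Your plan is sound in spirit but takes a different, and harder, route than the paper. The paper sidesteps the bookkeeping entirely by a one-parameter deformation: for each $\varepsilon\in(0,1]$ one inflates $\varepsilon\nu$ instead of $\nu$, and because the support of $m$ crosses each gray parallelogram only in directions parallel to its sides, one can define a measure $m_\varepsilon$ in the puzzle of $\varepsilon\nu$ by translating the white-piece parts of $m$ and shrinking the crossings. The crucial observation is that all the $m_\varepsilon$ are \emph{homologous} to $m$ with identical densities on corresponding edges; this immediately gives $\omega(m_\varepsilon)=\omega(m)$, and it forces all the $m_\varepsilon^{*}$ to have the same support as $m^{*}$, with densities that can only decrease as $\varepsilon$ decreases. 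Since $\mu=\lim_{\varepsilon\to 0}m_\varepsilon$, both conclusions drop out.

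What this buys over your direct approach is precisely the avoidance of the obstacle you flagged: you never have to construct or justify an explicit affine identification between the two $^{*}$-deflated triangles, because the homology of $m_\varepsilon$ with $m$ already carries a canonical bijection between the edges of their duals. Your edge-by-edge matching is in effect trying to recover this bijection by hand, and while that is likely possible, you have not indicated how to pin down the identification---in particular, why a gray parallelogram of the $\mu$-puzzle lands on a specific edge of $m^{*}$ rather than a nearby parallel one. Without that, the inequality $\mu^{*}\le m^{*}$ remains unproved. The deformation argument supplies exactly this missing alignment for free.
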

\begin{proof}
Consider the puzzle obtained by inflating the measure $\varepsilon\nu$
for $\varepsilon>0$. The white pieces of the puzzle are independent
of $\varepsilon$. Since the support of $m$ intersects any gray parallelogram
in the puzzle of $\nu$ only on segments parallel to the edges of
the parallelogram, it follows that there exists a measure $m_{\varepsilon}$
obtained by translating the support of $m$ in each white piece, and
applying appropriate translation and/or shrinking in the gray parallelograms
and light gray puzzle pieces. Clearly $m_{1}=m$, and all the measures
$m_{\varepsilon}$ are homologous to $m$; in fact, homologous sides
have equal densities, and therefore $\omega(m)=\omega(m_{\varepsilon})$
for all $\varepsilon>0$. (Here it may be useful to recall that $\omega(m)$
is defined in terms of its densities outside $\triangle_{r}$, and
the ouside edges are not generally present in our drawings.) Moreover,
all the measures $m_{\varepsilon}^{*}$ have the same support, except
that some of the densities are decreased for $\varepsilon<1$. The
measure $\mu$ is simply the limit of $m_{\varepsilon}$ as $\varepsilon\to0$,
and the statement follows immediately from this observation.

The following pictures illustrates the process as applied to the above
examples for $\varepsilon=2/3$ and $\varepsilon=1/3$.\begin{center}

\includegraphics[scale=0.5]{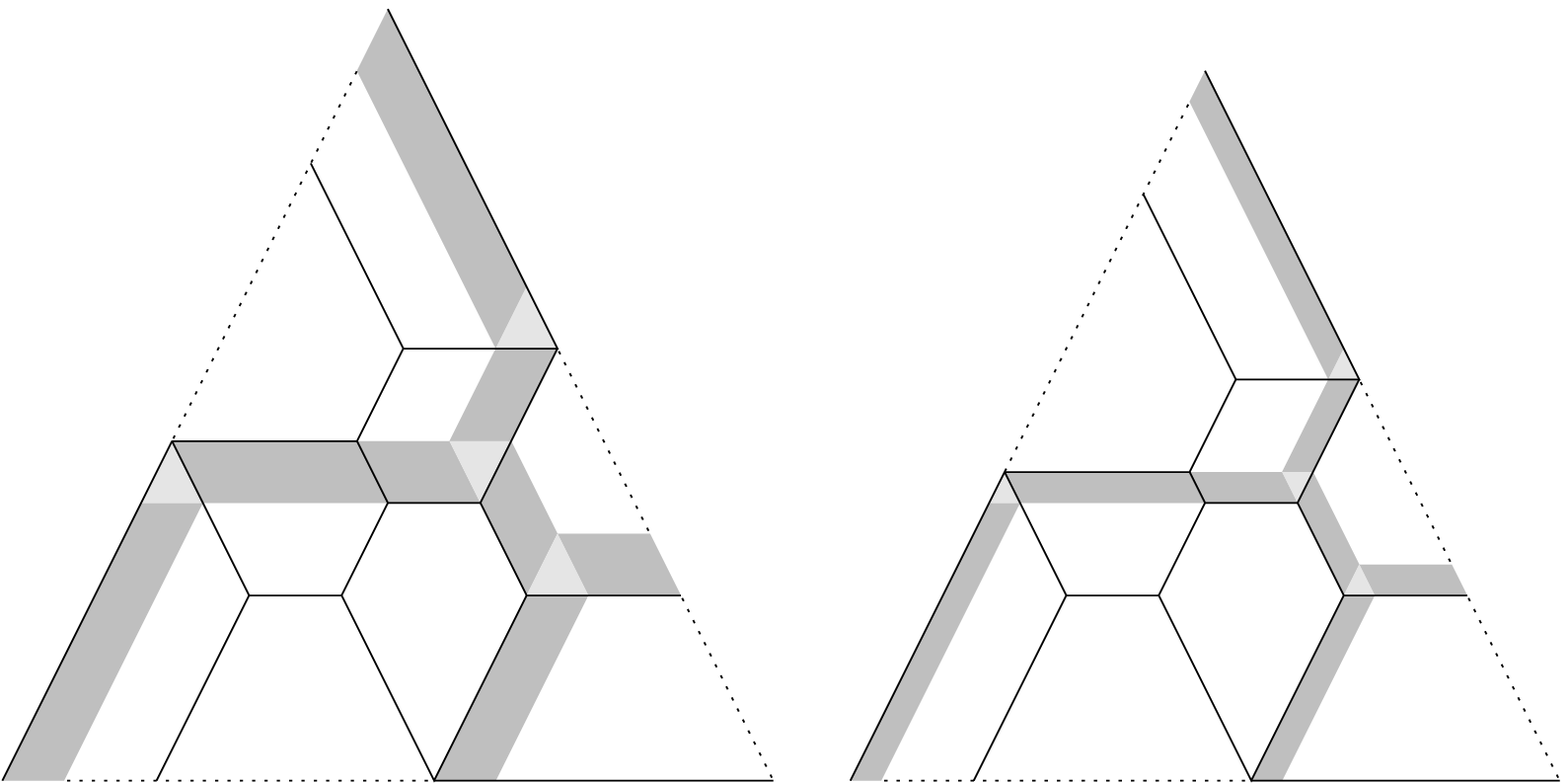}

\end{center}\begin{center}

\includegraphics[scale=0.5]{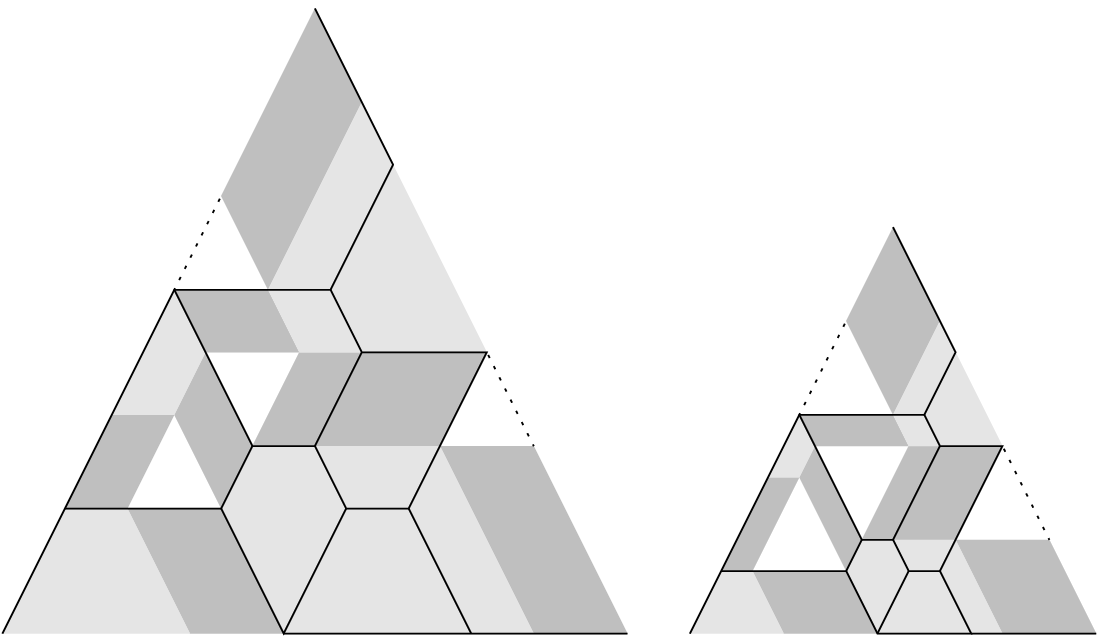}

\end{center} 
\end{proof}
Unfortunately, the definition of clockwise overlays is not quite explicit
since they are seen as the result of a process --- something akin
to defining a car as the end product of car manufacture. We can however
use the relation $\prec_{0}$ between skeletons to produce an important
class of clockwise overlays.

\begin{thm}
\label{thm:stretching}Let $\mu_{1},\mu_{2}\in\mathcal{M}_{r}$ be
such that $\mu_{1}+\mu_{2}$ is rigid, the support $S_{j}$ of $\mu_{j}$
is a skeleton, and $S_{2}\not\prec_{0}S_{1}$. Then $(\mu_{1},\mu_{2})$
is a clockwise overlay.
\end{thm}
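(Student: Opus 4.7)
The plan is to construct explicitly a measure $m$ on the inflated triangle associated with $\mu_2$ whose support lifts $S_1$ and crosses each gray parallelogram only along lines parallel to one of its sides; its contraction will then return $\mu_1$, exhibiting $(\mu_1,\mu_2)$ as a clockwise overlay in the sense defined above.

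First I inflate $\mu_2$ to obtain the puzzle in $\triangle_{r+\omega(\mu_2)}$: a decomposition into white pieces (translates of the connected components of $\triangle_r\setminus S_2$), gray parallelograms (one per edge of $S_2$, with the light gray direction $60^{\circ}$ clockwise from the edge), and light gray pieces (one per branch point of $S_2$). I then lift $S_1$ to a set $\widetilde S_1$ in the inflated triangle by translating each maximal segment of $S_1\setminus S_2$ along with the white piece containing it, extending across each gray parallelogram via straight lines parallel to one of its sides, and routing through the light gray pieces at branch points of $S_2$ that also lie on $S_1$. Assigning the densities from $\mu_1$ to $\widetilde S_1$ defines the candidate measure $m$.

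The crux is local consistency at every intersection point $X\in S_1\cap S_2$. If $X$ lies in the interior of an edge of $S_2$ with direction $\ell_2$, then the vertex of $S_1$ at $X$ is among the configurations catalogued in the branching figure of Section 3, and in each case the edges of $S_1$ at $X$ determine a single line $\ell_1\ne\ell_2$. The hypothesis $S_2\not\prec_0 S_1$ excludes the possibility that $\ell_1$ is $60^{\circ}$ counterclockwise from $\ell_2$, i.e.\ the direction not parallel to any side of the associated gray parallelogram; the remaining alternative makes $\ell_1$ parallel to the light gray sides, so $\widetilde S_1$ crosses the parallelogram along parallel lines as required. If instead $X$ is a branch point of $S_2$ (or a branch point of $S_1$ interacting with a passing edge of $S_2$), then rigidity of $\mu_1+\mu_2$ bounds the number of edges at $X$ by five and rules out any evil loop passing through $X$; this confines the joint configuration at $X$ to a short list of types in which the corresponding light gray piece is polygonal with enough sides to absorb every edge of $\widetilde S_1$ passing through it along parallel directions. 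This reduces to a bounded case check.

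Once the local checks are in hand, $m$ automatically satisfies the balance condition at every lattice point of the inflated triangle: inside white pieces it is inherited from $\mu_1$, inside gray parallelograms it follows from $\widetilde S_1$ being a union of parallel segments of constant density, and at the boundaries of light gray pieces it follows from the enumerated vertex types. By construction, the contraction of $m$ with respect to the inflation of $\mu_2$ recovers $\mu_1$, so $(\mu_1,\mu_2)$ is a clockwise overlay. The principal difficulty is precisely the case analysis at the branch points of $S_2$ that meet $S_1$ (and vice versa); there one must combine the prohibition of six-way meetings and evil loops in $\mu_1+\mu_2$ with $S_2\not\prec_0 S_1$ to rule out every incompatible local picture, and organizing this enumeration efficiently --- ideally using the catalogue of allowed branch types established in Section 3 --- is the bulk of the work.
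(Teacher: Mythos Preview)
Your approach is essentially the paper's: inflate $\mu_2$, lift $S_1$ into the resulting puzzle, and verify by a local case analysis at each lattice point of $S_1\cap S_2$ that the lift extends consistently and crosses every gray parallelogram only in an allowed direction. The paper organizes the enumeration by the pair $(p,q)$ giving the number of edges of $S_1$ and $S_2$ at the point (each between $2$ and $4$), using the $60^\circ$ rotation symmetry of the inflation to fix the position of the $S_2$-edges; in each case the configuration either extends, is excluded by rigidity of $\mu_1+\mu_2$, or forces $S_2\prec_0 S_1$.

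There is one genuine ingredient your sketch omits: how to place the $\mu_1$-density carried by edges common to $S_1$ and $S_2$. After inflation such an edge becomes a gray parallelogram, and the lifted measure must sit on one of its two white sides; the paper's rule is to orient every common edge away from the root edges (this is possible because a common edge is never a root edge) and attach it, with its $\mu_1$-mass, to the white piece on its \emph{right}. This choice is not cosmetic---it is precisely what makes the local pictures at branch points with shared edges fit together, and several of the $(p,q)$ cases depend on knowing this orientation. A minor inaccuracy: your ``single line $\ell_1$'' description of the case where $X$ is interior to an $S_2$-edge is only valid when $S_1$ also has just two edges at $X$; when $S_1$ branches there ($p=3$ or $4$) the edges of $S_1$ do not lie on one line and those configurations need their own check.
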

\begin{proof}
We need to inflate $\mu_{2}$, and construct a measure $m_{1}$ such
that $\mu_{1}$ is obtained from $m_{1}$ by the shrinking process
described above. It is clear what the measure $m_{1}$ should be on
the interior of every white puzzle piece. The common edges of $S_{1}$
and $S_{2}$ cannot be root edges; orient them away from the root
edges, and attach them (along with their $\mu_{1}$ masses) to the
white puzzle piece on their right side. What remains to be proved
is that this partialy defined measure can be extended so as to satisfy
the balance condition at all points. For this purpose we only need
to analyze the situation at lattice points where $S_{1}$ and $S_{2}$
meet. For each such lattice point, there will be $2,3,$ or $4$ edges
of each skeleton meeting at that point, and this gives rise to many
possibilities. In order to reduce the number of cases we need to study,
observe that the inflation construction is invariant relative to rotations
of $60^{\circ}$, and therefore the position (but perhaps not the
orientation) of the edges in one of the skeletons can be fixed. In
the following figures, the arrows indicate the orientation on the
edges in $S_{1}\cap S_{2}$. The other edges of $S_{1}$ are dashed,
and the other edges of $S_{2}$ are solid without arrows. In each
case, the extension required after inflation is indicated by dashed
lines crossing (or on the boundary of) parallelogram pieces. In he
following enumeration, the label $(p,q)$ signifies that $S_{1}$
has $p$ and $S_{2}$ has $q$ edges meeting at one point.
\begin{enumerate}
\item $(2,2)$ The edges of the skeletons may overlap, and after a rotation
the orientation is as in the figure below.\begin{center}

\includegraphics[scale=0.5]{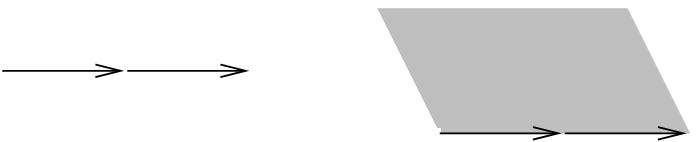}

\end{center}No extensions are required in this case. If the skeletons do not overlap,
we have two possibilities:\begin{center}

\includegraphics[scale=0.5]{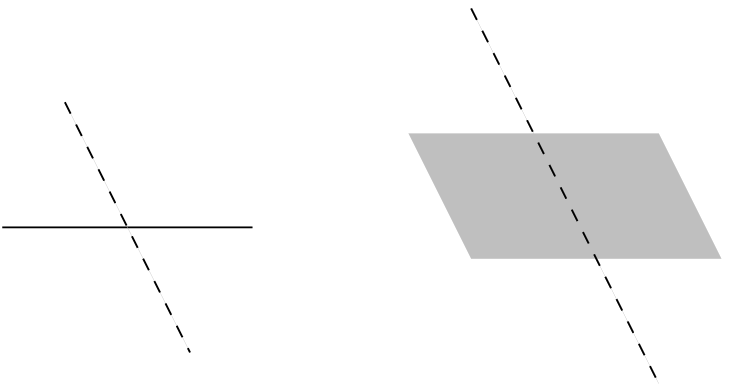}

\end{center}and finally\begin{center}

\includegraphics[scale=0.5]{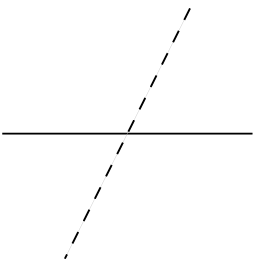}

\end{center}which would imply $S_{2}\prec_{0}S_{1}$, contrary to the hypothesis.
\item $(3,2)$ In this case there is (up to rotations) only the case illustrated
in the figure.\begin{center}

\includegraphics[scale=0.5]{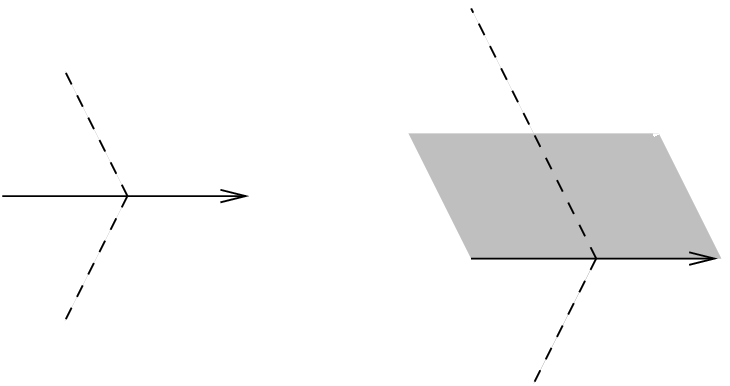}

\end{center}
\item $(4,2)$ Up to rotations, there are three possibilities. In the first
one we have an extension as shown.\begin{center}

\includegraphics[scale=0.5]{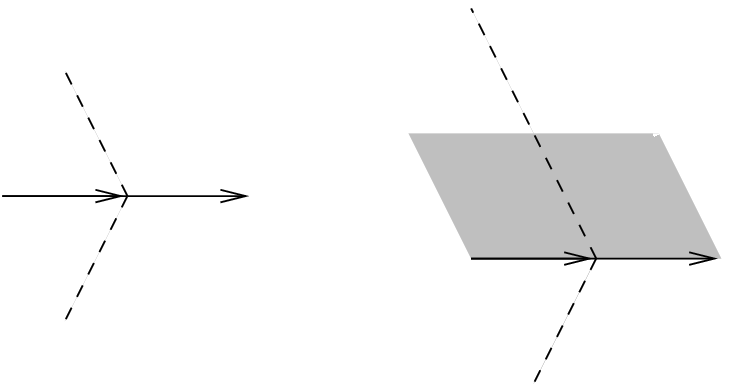}

\end{center}The orientation shown above is the only one which is compatible with
the rigidity of $m$. In the following figure, the orientation given
is also the only possible one.\begin{center}

\includegraphics[scale=0.5]{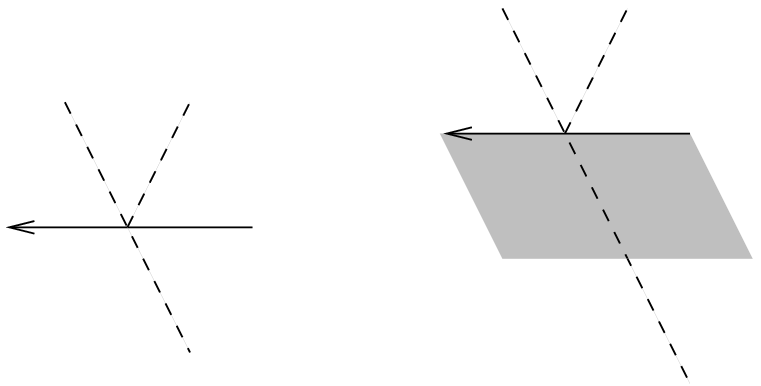}

\end{center}The third situation\begin{center}

\includegraphics[scale=0.5]{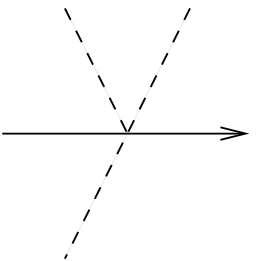}

\end{center}implies $S_{2}\prec S_{1}$.
\item (2,3) The case illustrated is the only one up to rotations.\begin{center}

\includegraphics[scale=0.5]{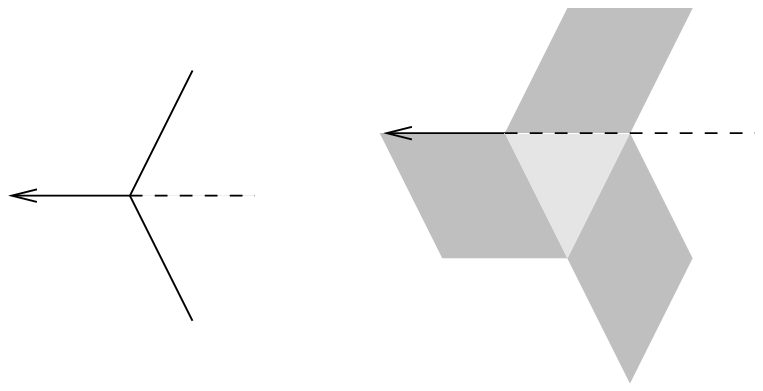}

\end{center}
\item $(3,3)$ There are two cases up to rotations.\begin{center}

\includegraphics[scale=0.5]{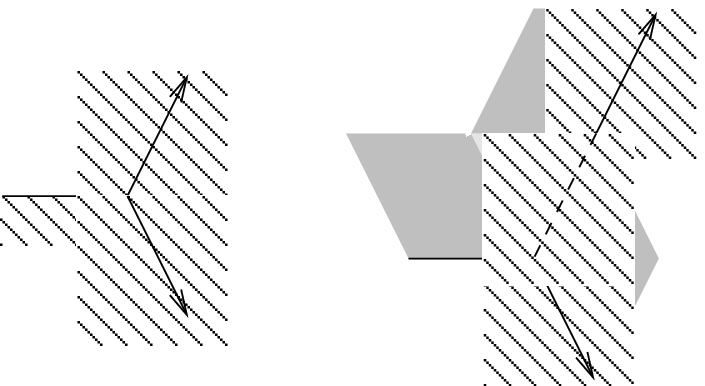}

\end{center}The second case is not compatible with rigidity.\begin{center}

\includegraphics[scale=0.5]{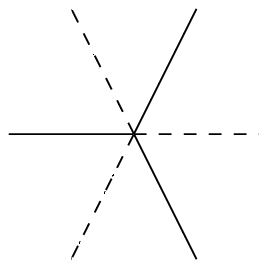}

\end{center}
\item $(4,3)$ There is only one position of $S_{1}$ compatible with rigidity,
but there are two possible orientations. \begin{center}

\includegraphics[scale=0.5]{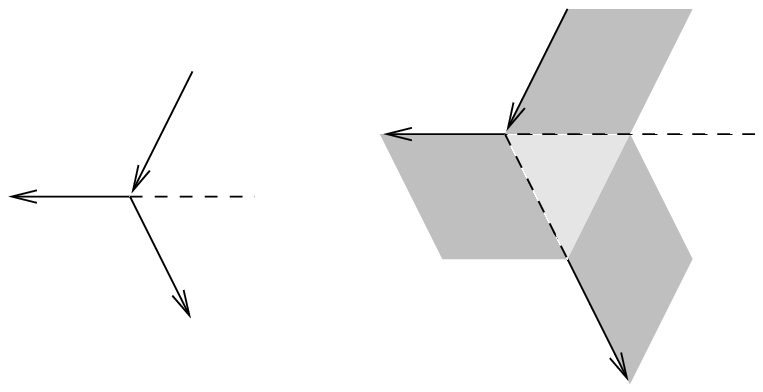}

\end{center}The second orientation requires a different extension.\begin{center}

\includegraphics[scale=0.5]{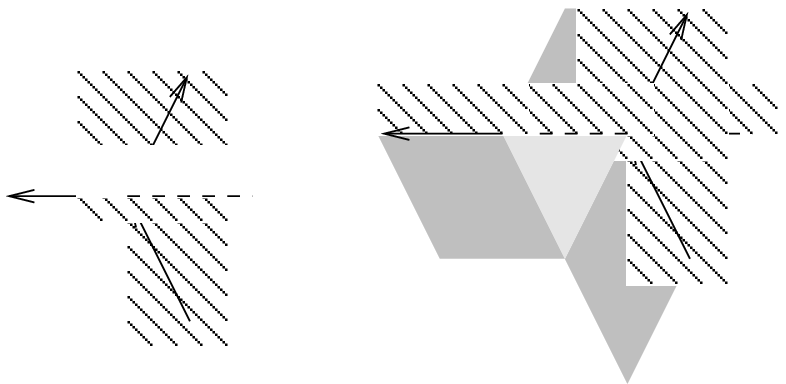}

\end{center}
\item $(2,4)$ There are three possibilities up to rotation.\begin{center}

\includegraphics[scale=0.5]{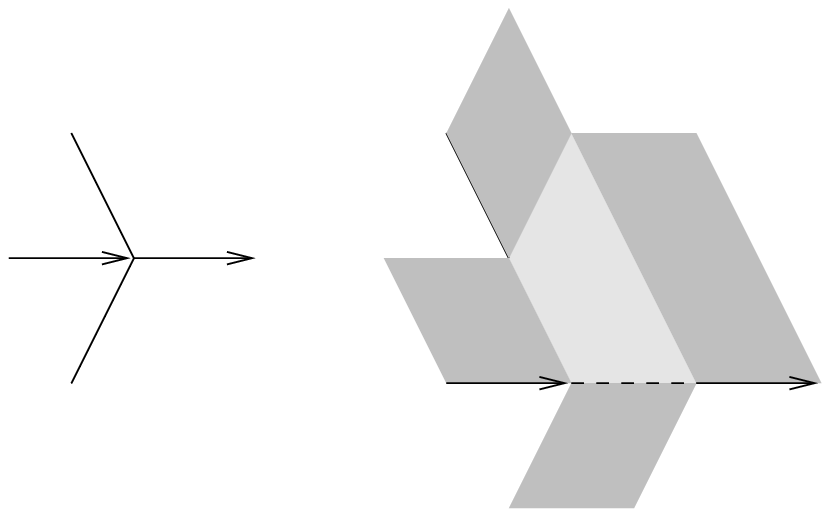}

\end{center}In the figure above, there is no ambiguity in the orientation. For
the illustration we assigned $\mu_{2}$ masses of $1$ and $2$ to
the edges.\begin{center}

\includegraphics[scale=0.5]{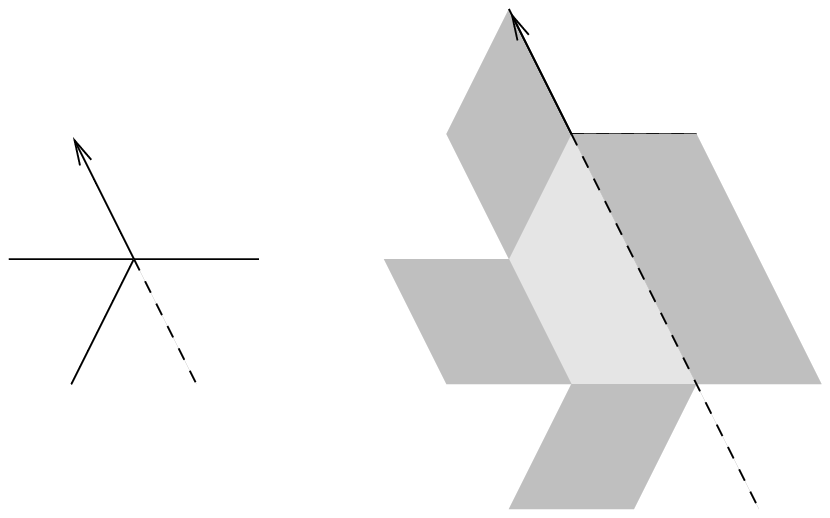}

\end{center}The orientation is also clear in this case. The third case implies
$S_{2}\prec S_{1}$.\begin{center}

\includegraphics[scale=0.5]{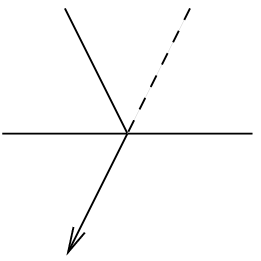}

\end{center}
\item $(3,4)$ There is only one position compatible with rigidity, and
there are only two possible orientations.\begin{center}

\includegraphics[scale=0.5]{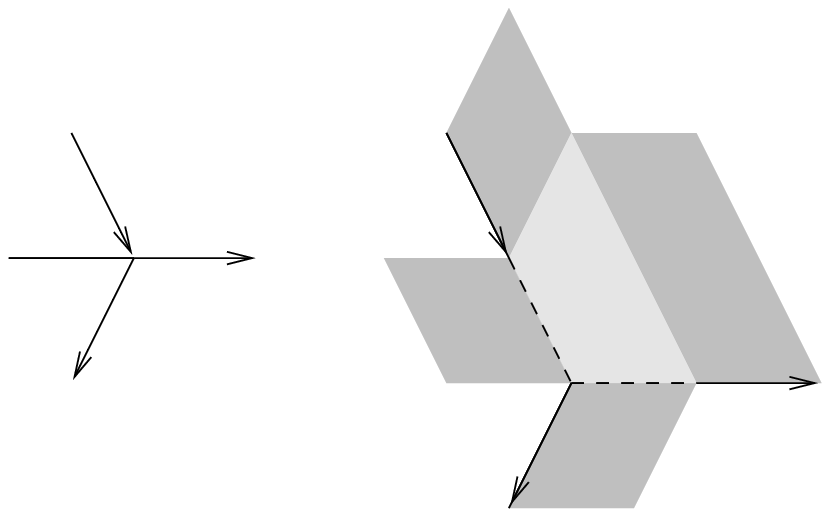}

\end{center}\begin{center}

\includegraphics[scale=0.5]{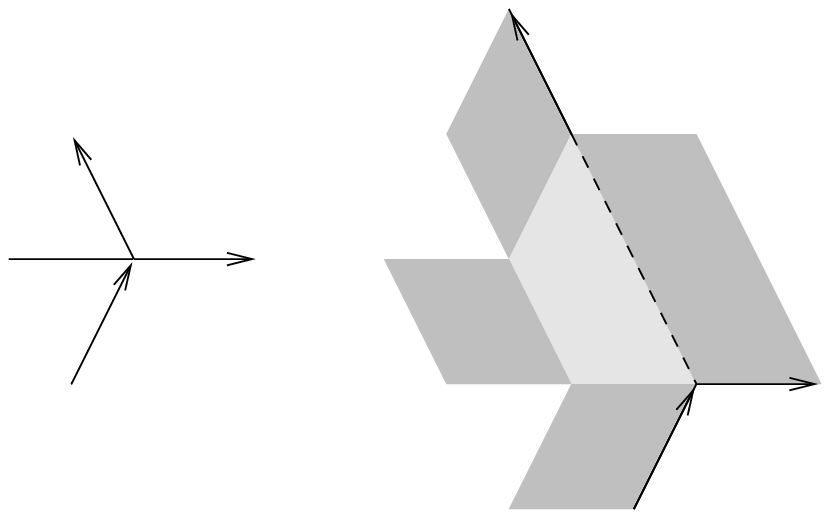}

\end{center}
\item $(4,4)$ When the two skeletons overlap completely, there are two
possible orientations.\begin{center}

\includegraphics[scale=0.5]{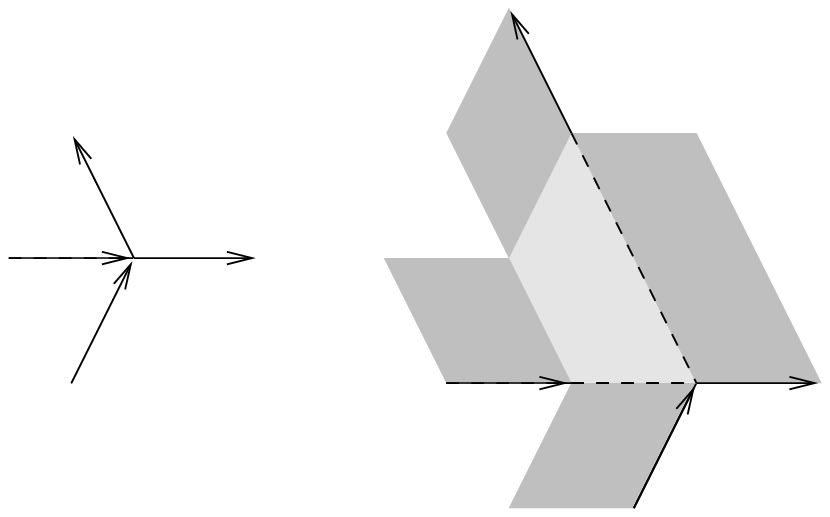}

\end{center}\begin{center}

\includegraphics[scale=0.5]{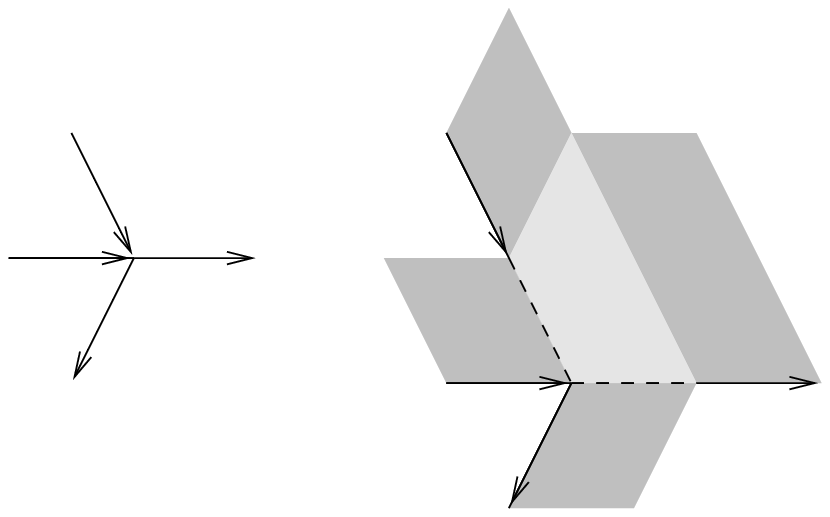}

\end{center}When the skeletons do not overlap completely, there is only one relative
position of the two skeletons which is compatible both with rigidity
and with $S_{2}\not\prec S_{1}$. There is only one possible orientation.\begin{center}

\includegraphics[scale=0.5]{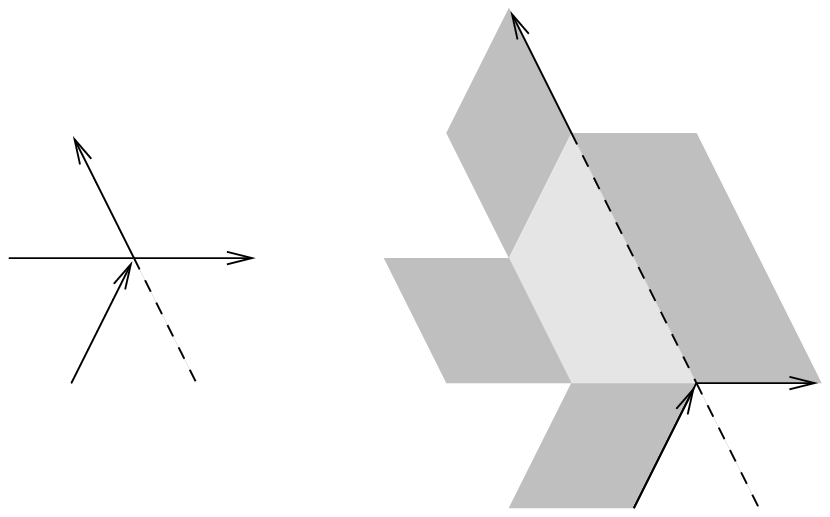}

\end{center}
\end{enumerate}
\end{proof}
The following result follows easily by induction, inflating successively
the measures $\mu_{p},\mu_{p-1},\dots,\mu_{t+1}$.

\begin{cor}
Let $m\in\mathcal{M}_{r}$ be a rigid measure, and write it as $m=\sum_{\ell=1}^{p}\mu_{\ell}$,
where $\mu_{\ell}$ is supported on the skeleton $S_{\ell}$. Assume
also that $S_{i}\prec S_{j}$ implies that $i\le j$. Then the pair
$\left(\sum_{\ell=1}^{t}\mu_{\ell},\sum_{\ell=t+1}^{p}\mu_{\ell}\right)$
is a clockwise overlay for $1\le t<p$.
\end{cor}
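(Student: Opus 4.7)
The plan is to induct on $p-t$, inflating the skeletons $S_p, S_{p-1},\dots, S_{t+1}$ in that order. For the base case $p-t=1$, I need to show that $\left(\sum_{\ell=1}^{p-1}\mu_\ell,\mu_p\right)$ is a clockwise overlay. The ordering hypothesis gives $S_p \not\prec S_\ell$, hence $S_p \not\prec_0 S_\ell$, for every $\ell<p$. I would like to apply Theorem~\ref{thm:stretching} directly, but the first measure here is not extremal. The proof of that theorem is, however, purely local: it analyzes the possible configurations at each vertex where $S_1$ and $S_2$ meet and prescribes extensions in the parallelograms and light gray pieces. I would verify that the analysis carries over to the present setting by checking that, at every vertex $X$ where $S_p$ meets some of the $S_\ell$ with $\ell<p$, the local configuration falls into one of the listed cases for each individual pair $(S_\ell,S_p)$, and that the resulting extensions add coherently inside each parallelogram and each light gray piece. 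Rigidity of $m$ (at most five edges at any lattice vertex) constrains the combinatorics enough that this superposition is unambiguous, and the exclusion $S_p\not\prec_0 S_\ell$ rules out precisely those subcases from the proof of Theorem~\ref{thm:stretching} in which an inflation cannot be carried out.

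For the inductive step with $p-t\ge 2$, set $m''=\sum_{\ell=1}^{p-1}\mu_\ell$. This is rigid because its support is contained in the support of $m$ (by the earlier proposition that sub-supports of rigid measures are rigid), and its skeleton decomposition inherits the ordering hypothesis. The inductive hypothesis applied to $m''$ yields a clockwise overlay $\left(\sum_{\ell=1}^{t}\mu_\ell,\sum_{\ell=t+1}^{p-1}\mu_\ell\right)$, while the base case applied to $m$ yields a clockwise overlay $(m'',\mu_p)$. Composing the two contractions---equivalently, inflating $\mu_p$ first and then the remaining $\mu_{p-1},\dots,\mu_{t+1}$ inside the resulting picture---should produce the desired clockwise overlay $\left(\sum_{\ell=1}^{t}\mu_\ell,\sum_{\ell=t+1}^{p}\mu_\ell\right)$.

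The main obstacle I foresee is justifying the composition: a clockwise overlay in the sense of Section~4 is defined through a single puzzle, so I must verify that successive inflations assemble into the puzzle of the sum $\sum_{\ell=t+1}^{p}\mu_\ell$, independently of the order. This reduces to checking, along the common faces between the inflated regions coming from distinct $\mu_\ell$, that the extensions dictated by the base case match up; the densities on these common faces are determined by the local rigidity picture and therefore do not depend on which skeleton was inflated first. Once this matching is in place, the rest of the argument is bookkeeping on densities and the reduction to the purely local analysis of Theorem~\ref{thm:stretching} makes the induction genuinely routine.
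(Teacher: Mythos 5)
Your proposal follows essentially the same route as the paper, which establishes this corollary precisely by induction, inflating the measures $\mu_{p},\mu_{p-1},\dots,\mu_{t+1}$ successively and relying on the local analysis of Theorem \ref{thm:stretching}; the two points you flag (handling a non-extremal first component, and the coherence of successive inflations into the puzzle of the sum) are exactly the details the paper leaves implicit. Note that the first point can be bypassed: apply Theorem \ref{thm:stretching} to each pair $(\mu_{\ell},\mu_{p})$, $\ell<p$, and sum the resulting stretches, since the balance condition and the contraction procedure are additive in the measure.
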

For the clockwise overlays $(\mu_{1},\mu_{2})$ considered in the
preceding two results there is a canonical construction for the measure
$m_{1}\in\mathcal{M}_{r+\omega(\mu_{2})}$. We will call this measure
$m_{1}$ the \emph{stretch of} $\mu_{1}$ \emph{to the puzzle} \emph{of}
$\mu_{2}$.

\section{Proof of the Main Results}

Fix a triple $(I,J,K)$ of subsets with cardinality $r$ of $\{0,1,\dots,n\}$
such that $c_{IJK}=1$, and let $m\in\mathcal{M}_{r}$ be the corresponding
measure. It will be convenient now to use the normalization $\tau(1)=n$
in a finite factor. This will not require renormalizations when passing
to a subfactor, and has the added benefit of working in finite dimensions
as well. In order to prove the intersection results in the introduction,
we will want to prove the following related properties:

\textbf{Property $A(I,J,K)$ or $A(m)$}. Given a II$_{1}$ factor
$\mathcal{A}$ with $\tau(1)=n$, and given flags $\mathcal{E},\mathcal{F},\mathcal{G}$
with $\tau(E_{\ell})=\tau(F_{\ell})=\tau(G_{\ell})=\ell$, $\ell=0,1,2,\dots,n$,
the intersection\[
S(\mathcal{E},I)\cap S(\mathcal{F},J)\cap S(\mathcal{G},K)\]
is not empty.

\textbf{Property $B(I,J,K)$ or $B(m).$} There exists a lattice polynomial
$p\in L(\{ e_{j},f_{j},g_{j}:1\le j\le n\})$ with the following property:
for any finite factor $\mathcal{A}$ with $\tau(1)=n$, and for generic
flags $\mathcal{E}=(E_{j})_{j=0}^{n}$, $\mathcal{F}=(F_{j})_{j=0}^{n},$
$\mathcal{G}=(G_{j})_{j=0}^{n}$ such that $\tau(E_{j})=\tau(F_{j})=\tau(G_{j})=j$,
the projection $P=p(\mathcal{E},\mathcal{F},\mathcal{G})$ has trace
$\tau(P)=r$ and, in addition\[
\tau(P\wedge E_{i})=\tau(P\wedge F_{j})=\tau(P\wedge G_{k})=\ell\]
when $i_{\ell}\le i<i_{\ell+1},j_{\ell}\le j<j_{\ell+1},k_{\ell}\le k<k_{\ell+1}$
and $\ell=0,1,\dots,r$, where $i_{0}=j_{0}=k_{0}=0$ and $i_{r+1}=j_{r+1}=k_{r+1}=n+1$.

We will prove these properties by reducing them to simpler measures
for which they are trivial. The basic reduction is from an arbitrary
measure to a skeleton.

\begin{prop}
\label{prop:reduction-of-AB(m)}Let $m\in\mathcal{M}_{r}$ a rigid
measure, and write $m=\sum_{\ell=1}^{p}\mu_{\ell}$, where $\mu_{\ell}$
is supported by a skeleton $S_{\ell}$ , and $S_{i}\prec S_{j}$ implies
$i\le j$. Let $\widetilde{\mu_{1}}\in\mathcal{M}_{\widetilde{r}}$,
$\widetilde{r}=\sum_{\ell=2}^{p}\omega(\mu_{\ell})$ be the stretch
of $\mu_{1}$ to the puzzle of $m'=\sum_{\ell=2}^{p}\mu_{\ell}$.
If $A(\widetilde{\mu_{1}})$ and $A(m')$ $($resp., $B(\widetilde{\mu_{1}})$
and $B(m'))$ are true, then $A(m)$ $($resp., $B(m))$ is true as
well.
\end{prop}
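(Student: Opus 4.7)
The plan is to build the projection $P$ solving the full Horn problem for $m$ by combining solutions to the two sub-problems supplied by $A(m')$ and $A(\widetilde{\mu_1})$, using the clockwise overlay $(\mu_1,m')$ as the geometric bridge. Because the skeletons have been ordered so that $S_1$ is minimal under $\prec$, the Corollary to Theorem~\ref{thm:stretching} (applied with $t=1$) guarantees that $(\mu_1,m')$ is a clockwise overlay, and the canonical stretch $\widetilde{\mu_1}$ provides the inflated incarnation of $\mu_1$ inside the puzzle of $m'$. Proposition~\ref{prop:support-of-clockwise-measure} ($\omega(\mu_1)=\omega(\widetilde{\mu_1})$ and $\mu_1^*\le\widetilde{\mu_1}^*$) will be the quantitative tool that lets data travel between the two sub-problems.

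First I would apply $A(\widetilde{\mu_1})$ to the given flags $\mathcal{E},\mathcal{F},\mathcal{G}$ to manufacture a projection $\widetilde P\in\mathcal{A}$ of trace $\widetilde r=\omega(m')$ that realises the intersection data associated with the stretched measure. The indices $(\widetilde I,\widetilde J,\widetilde K)$ read off from $\widetilde{\mu_1}$ are precisely the light-gray positions in the inflation of $m'$, so $\widetilde P$ carries exactly the information about $m$ that is ``dual'' to $m'$. Next I would apply $A(m')$, after re-indexing the flags by the white edges of the inflation of $m'$, to produce a projection $P'\in\mathcal{A}$ of trace $r$ solving the intersection problem for $(I',J',K')$; this works inside the same ambient $\mathcal{A}$, using that the indices $\{1,\dots,n-\omega(\mu_1)\}$ embed canonically into $\{1,\dots,n\}$ via the inflation map. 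The candidate projection for $m$ is then a suitable lattice combination of $P'$ and $\widetilde P$, chosen so that its restriction to the white portion of the puzzle reproduces $P'$ and its restriction to the light-gray portion reproduces $\widetilde P$. For Property $B(m)$ the same combination, applied to the lattice polynomials furnished by $B(m')$ and $B(\widetilde{\mu_1})$, yields the required polynomial $p$; the generic subset of $\mathcal{U}(\mathcal{A})^3$ valid for $p$ is the intersection of the generic sets valid for $p'$ and $\widetilde p$.

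The hardest step will be the bookkeeping that converts the combinatorial inflation/deflation on the measure side into the correct lattice operations and trace identities on the projection side. Concretely, I must verify that the combined projection has trace exactly $r$, and that each $\tau(P\wedge E_{i_\ell})\ge\ell$ follows from the corresponding inequalities satisfied by $P'$ (controlling the white part) and by $\widetilde P$ (controlling the light-gray part), using $\mu_1^*\le\widetilde{\mu_1}^*$ and the boundary identities of Section~1 to match up indices. A secondary difficulty is confirming that when one of the sub-flags is replaced by its compression or by a re-indexed subfamily, the genericity condition required to invoke $B(m')$ or $B(\widetilde{\mu_1})$ is still satisfied; this should follow because the re-indexing and any compression used are continuous in the underlying unitary triple $(U,V,W)$, so generic sets pull back to generic sets.
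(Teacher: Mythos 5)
Your first step matches the paper: apply $A(\widetilde{\mu_1})$ to the original flags $\mathcal{E},\mathcal{F},\mathcal{G}$ in $\mathcal{A}$. (Note, though, that the resulting projection has trace $r_1=r+\omega(m')$, not $\omega(m')$: the Schubert problem attached to $\widetilde{\mu_1}\in\mathcal{M}_{r_1}$ asks for a projection of trace $r_1$, and since $\omega(\widetilde{\mu_1})=\omega(\mu_1)$ the ambient normalization $\tau(1)=n$ is unchanged.) The genuine gap is in the second half. You propose to solve the $m'$-problem ``in parallel'' in the same ambient factor, after some re-indexing of the flags, and then to glue the two projections $P'$ and $\widetilde P$ by ``a suitable lattice combination.'' This is exactly the step that cannot work as stated: $A(m')$ is a statement about a factor normalized by $\tau(1)=n_1=n-\omega(\mu_1)$ with flags of integer traces $0,1,\dots,n_1$, so it cannot be invoked directly in $\mathcal{A}$; and there is no lattice combination of two independently produced projections that yields the inequalities $\tau(P\wedge E_{i_t})\ge t$, because the two solutions carry no relation to one another. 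The paper's mechanism is nested, not parallel: writing $P_1$ for the solution of the $\widetilde{\mu_1}$-problem, one passes to the compressed factor $\mathcal{A}_1=P_1\mathcal{A}P_1$ (with $\tau(P_1)=r_1$) and builds new flags there satisfying $E'_{i'_t}\le P_1\wedge E_{i_t}$; this is possible precisely because the index identity $i^{(1)}_\ell=i_t$ for $\ell=i'_t=t+\sum_{s=0}^{t-1}a'_s$ converts the conclusion of $A(\widetilde{\mu_1})$ into $\tau(P_1\wedge E_{i_t})\ge i'_t$. Then $A(m')$ is applied inside $\mathcal{A}_1$ to $\mathcal{E}',\mathcal{F}',\mathcal{G}'$, and the resulting $P\le P_1$ of trace $r$ already satisfies $\tau(P\wedge E_{i_t})\ge\tau(P\wedge E'_{i'_t})\ge t$, so no further gluing is needed. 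Your proposed use of Proposition \ref{prop:support-of-clockwise-measure} ($\mu_1^*\le m^*$) is not what carries the argument; the real bookkeeping is the combinatorial identification of $I^{(1)},J^{(1)},K^{(1)}$ with the white edges of the inflation of $m'$ and the identity above.

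For property $B$ your outline is repairable once the nesting is in place: the paper takes $P_1$ to be a lattice polynomial, observes that the compressed flags may be chosen of the form $P_1\wedge E_i$ (hence again lattice polynomials), and substitutes them into the polynomial furnished by $B(m')$; genericity of the compressed flags is absorbed into an additional genericity requirement on the original triple of unitaries, much as you suggest.
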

\begin{proof}
With the usual notation $A_{i}=iu$, $X{}_{i}=A_{i}+w$, the edges
$A_{i}X{}_{i}$ are oriented in the direction of $w$ (if they belong
to the support of $m$). Let us set $a_{i}=m(A_{i}X{}_{i})$, $a_{i}^{(1)}=\mu_{1}(A_{i}X{}_{i})$,
and $a'_{i}=m'(A_{i}X{}_{i})$, so that $a_{i}=a'_{i}+a_{i}^{(1)}$,
and $n=r+\sum_{i=0}^{r}a_{i}$. The measure $\widetilde{\mu_{1}}$
is associated with the triangle $\triangle_{r_{1}}$, where $r_{1}=r+\sum_{i=0}^{r}a'_{i}.$
Its support may intersect the left side of this triangle only at the
points $A_{\ell(i)}$, where $\ell(0)=0$, and $\ell(i)=i+\sum_{s=0}^{i-1}a'_{s}$
for $i>0$; this follows from the way the inflation of $m'$ is constructed,
and from the outward orientation of the segments $A_{i}X{}_{i}$.
Moreover, $\widetilde{\mu_{1}}(A_{\ell(i)}X{}_{\ell(i)})=a_{i}^{(1)}$
for $i=0,1,\dots,r$. Denote by $I^{(1)},J^{(1)},K^{(1)}\subset\{1,2,\dots,n\}$
the sets determined by the measure $\widetilde{\mu_{1}}$, and by
$I',J',K'\subset\{1,2,\dots,n_{1}\}$ those corresponding with $m'$,
where we set $n_{1}=n-\omega(m_{1})$. For instance, we have\[
I^{(1)}=\bigcup_{j=1}^{r}\left\{ s+\sum_{\ell=0}^{j-1}a_{\ell}^{(1)}+\sum_{\ell=0}^{j-2}(a'_{\ell}+1):s=1,2,\dots,a'_{j-1}+1\right\} ,\]
where the second sum is zero for $j=1$, and\[
I'=\{ i_{t}-\sum_{\ell=0}^{t-1}a_{\ell}^{(1)}:t=1,2,\dots,r\},\]
where $I=\{ i_{1},i_{2},\dots,i_{r}\}$. Observe that $i_{\ell}^{(1)}=i_{t}$
for $\ell=t+\sum_{s=0}^{t-1}a'_{s}=i'_{t}$. 

Assume first that $A(\widetilde{\mu_{1}})$ and $A(m')$ are true,
and let $\mathcal{E},\mathcal{F},\mathcal{G}$ be arbitrary flags
in a II$_{1}$ factor such that $\tau(E_{i})=\tau(F_{i})=\tau(G_{i})=i$
for $i=0,1,\dots,n$, and $\tau(1)=n$. Property $A(\widetilde{\mu_{1}})$
implies the existence of a projection $P_{1}\in\mathcal{A}$ such
that $\tau(P_{1})=r_{1}$ and \[
\tau(P_{1}\wedge E_{i_{\ell}^{(1)}})\ge\ell,\ell=1,2,\dots,r_{1}.\]
As noted above, we have $i_{\ell}^{(1)}=i_{t}$ for $\ell=i'_{t}$,
and therefore we have\[
\tau(P_{1}\wedge E_{i'_{t}})\ge t+\sum_{s=0}^{t-1}a'_{s}=i'_{t},\quad t=1,2,\dots,r,\]
with analogous inequalities for $\mathcal{F}$ and $\mathcal{G}$.
Consider now the factor $\mathcal{A}_{1}=P_{1}\mathcal{A}P_{1}$ with
the trace $\tau_{1}=\tau|\mathcal{A}_{1}$, so that $\tau_{1}(1_{\mathcal{A}'_{1}})=\tau(P_{1})=r_{1}$.
The inequalities above imply the existence of a flag $\mathcal{E}'$
in $\mathcal{A}_{1}$ such that $\tau_{1}(E'_{j})=j$ for $j=1,2,\dots,n_{1}$,
and\[
E'_{i'_{p}}\le P_{1}\wedge E_{i_{p}},\quad p=1,2,\dots,r.\]
 Analogous considerations lead to the construction of flags $\mathcal{F}'$
and $\mathcal{G}'$. Property $A(m')$ implies now the existence of
a projection $P\in\mathcal{A}_{1}$ such that $\tau_{1}(P)=r$,\[
\tau_{1}(P\wedge E_{i'_{p}}^{\prime})\ge p,\quad p=1,2,\dots,r,\]
and analogous inequalities are satisfied for $\mathcal{F}'$ and $\mathcal{G}'$.
Clearly the projection $P$ satisfies\[
\tau(P\wedge E_{i_{p}})\ge p,\quad p=1,2,\dots,r,\]
so that it solves the intersection problem for the sets $I,J,K$.

The case of property $B$ is settled analogously. The difference is
that $P_{1}$ is given as a lattice polynomial $P_{1}=p_{1}(\mathcal{E},\mathcal{F},\mathcal{G})$,
and the projections $E'_{j}$ can be taken to be of the form $P_{1}\wedge E_{i}$,
and hence they too are lattice polynomials in $\mathcal{E},\mathcal{F},\mathcal{G}$.
Finally, the solution $P$ is given as $P=p'(\mathcal{E}',\mathcal{F}',\mathcal{G}')$,
where the existence of $p'$ is given by property $B(m')$. One must
however assume that $\mathcal{E}',\mathcal{F}',\mathcal{G}'$ are
generic flags, and this simply amounts to an additional genericity
condition on the original flags.
\end{proof}
The preceding proposition shows that proving property $A(m)$ or $B(m)$
can be reduced to proving it for simpler measures, at least when $m$
is not extremal. A dual reduction is obtained by recalling that a
projection $P$ belongs to $S(\mathcal{E},I)$ if and only $P^{\perp}=1-P$
belongs to $S(\mathcal{E}^{\perp},I^{*})$. Moreover, if the sets
$I,J,K$ are associated to the measure $m\in\mathcal{M}_{r}$, then
$I^{*},J^{*},K^{*}$ are the sets associated to the measure $m^{*}$.
Therefore $A(m)$ is equivalent to $A(m^{*})$ and $B(m)$ is equivalent
to $B(m^{*})$.

To quantify these reductions, we define for each measure $m\in\mathcal{M}_{r}$
the positive integer $\kappa(m)$ as the number of gray parallelograms
in the puzzle obtained by inflating $m$. This is equal to the number
of white piece edges which have positive measure. Analogously, for
$m\in\mathcal{M}_{r}^{*}$, we define $\kappa^{*}(m)$ to be the number
of gray parallelograms in the puzzle obtained by {*}inflating $m$.
With this definition it is clear that \[
\kappa(m)=\kappa^{*}(m^{*}),\quad m\in\mathcal{M}_{r}.\]
Indeed, the two numbers count pieces of the same puzzle. 

With the notation of the preceding proposition, we have\[
\kappa(\widetilde{\mu_{1}})=\kappa(\mu_{1})<\kappa(m),\quad\kappa(m')<\kappa(m),\]
unless $m=\mu_{1}$. Indeed, $\kappa(\widetilde{\mu_{1}})=\kappa(\mu_{1})$
because $\mu_{1}$ and $\widetilde{\mu_{1}}$ are homologous, and
the supports of $\mu_{1}$ and $m'$ are strictly contained in the
support of $m$. In fact, the support of $m'$ does not contained
the root edges of $\mu_{1}$, and the support of $\mu_{1}$ does not
contain the root edges of the extremal summands of $m'.$ Thus the
preceding proposition also allows us to reduce the proof of these
properties to measures with smaller values of $\kappa$ in case either
$m$ or $m^{*}$ is not extremal. The exceptional situations in which
both $m$ and $m^{*}$ are extremal are very few in number. To see
this we need to use the structure of the convex polyhedral cone\[
C_{r}=\{\partial m:m\in\mathcal{M}_{r}\},\]
whose facets were determined in \cite{KTW}. If $\partial m=(\alpha,\beta,\gamma)$,
these facets are of two kinds. The first kind are the \emph{chamber
facets} determined by an equality of the form $\alpha_{\ell}=\alpha_{\ell+1}$,
$\beta_{\ell}=\beta_{\ell+1}$, $\gamma_{\ell}=\gamma_{\ell+1}$ for
$1\le\ell<r$ or $\alpha_{r}=\omega(m)$, $\beta_{r}=\omega(m)$,
$\gamma_{r}=\omega(m)$. The second kind are the \emph{regular facets}
determined by Horn identities\[
\sum_{i\in I}\alpha_{i}+\sum_{j\in J}\beta_{j}+\sum_{k\in K}\gamma_{k}=\omega(m),\]
where $I,J,K\subset\{1,2,\dots,r\}$ have $s<r$ elements and $c_{IJK}=1$.

For a given measure $m\in\mathcal{M}_{r}$, we define the number of
attachment points $\Gamma(m)$ to be the number of chamber facets
to which $m$ does not belong. The reason for this terminology is
that $\Gamma(m)$ is precisely the number of points on the sides of
$\triangle_{r}$ which are endpoints of interior edges in the support
of $m.$ The vertices of $\triangle_{r}$ should also counted as attachment
points when they are branch points of the measure.

\begin{prop}
Let $m\in\mathcal{M}_{r}$ be an extremal rigid measure. If $m^{*}$
is extremal as well, then $\Gamma(m)=1$.
\end{prop}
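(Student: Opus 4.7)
The plan is to prove the contrapositive: assume $m\in\mathcal{M}_{r}$ is extremal and rigid with $k:=\Gamma(m)\ge 2$, and construct a real-valued function $\eta$ on the edges of the skeleton $S^{*}$ of $m^{*}$ such that $m^{*}+\epsilon\eta\in\mathcal{M}_{n-r}^{*}$ for all sufficiently small $\epsilon>0$ and is not a positive scalar multiple of $m^{*}$. This contradicts extremality of $m^{*}$ and forces $\Gamma(m)=1$.

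First I set up the duality. Inflating $m$ yields a puzzle of $\triangle_{n}$ with $n=r+\omega(m)$, and this puzzle simultaneously encodes $m^{*}$ as its $*$-inflation under the $60^{\circ}$-clockwise rotation that interchanges the roles of white and light-gray pieces (Section 1). Consequently the edges of $S$ correspond bijectively to the gray parallelograms and hence to the edges of $S^{*}$, while the branch points of $m$ in $\triangle_{r}$ correspond to those of $m^{*}$ in $\triangle_{n-r}$. Each of the $k$ attachment points of $m$ on $\partial\triangle_{r}$ corresponds to a gray parallelogram with one of its light-gray sides on $\partial\triangle_{n}$, which after $*$-deflation becomes a distinct boundary edge of $S^{*}$ on $\partial\triangle_{n-r}$; call these boundary edges $e^{*}_{1},\dots,e^{*}_{k}$.

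Now I construct $\eta$. Fix two of the boundary edges, $e^{*}_{1}$ and $e^{*}_{2}$, and apply Lemma \ref{lemma:evil-paths} to produce a path $\pi$ in $S^{*}$ from $e^{*}_{1}$ to $e^{*}_{2}$ all of whose turns are evil. Define $\eta$ to shift a unit of density from $e^{*}_{1}$ onto $e^{*}_{2}$ along $\pi$, and extend $\eta$ to the remainder of $S^{*}$ by propagating through the branch points using the descendance mechanism from Section 3 (the one used to build the extremal measures $\mu_{e}$ from a root edge, via equation (\ref{eq:descendant-of-two})). Since $\eta$ is supported on $S^{*}$, the perpendicular-vanishing conditions defining $\mathcal{M}_{n-r}^{*}$ are automatically respected; $\eta$ is linearly independent of $m^{*}$ (they agree in sign on $e^{*}_{1}$ but have opposite signs on $e^{*}_{2}$); and $m^{*}+\epsilon\eta$ is strictly positive on $S^{*}$ for small $\epsilon>0$.

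The main obstacle is verifying that the propagation of $\eta$ away from $\pi$ closes up consistently: at each interior branch point of $S^{*}$ the balance condition must be preserved by the perturbation, and this requires a case analysis of the possible branch-point configurations (classified at the end of Section 3). Rigidity of $m^{*}$ is essential via Proposition \ref{prop:absence-of-evil}: any inconsistency in the propagation would force an evil loop in $S^{*}$, contradicting rigidity. The asymmetry between the $*$-perpendicular-zero conditions of $\mathcal{M}_{n-r}^{*}$ and the non-$*$ conditions of $\mathcal{M}_{r}$ (they are $60^{\circ}$-reflections of each other) is precisely what allows this construction to succeed on $S^{*}$ while the analogous construction fails on $S$, and hence permits $m$ to remain extremal while forcing $m^{*}$ to split as soon as $\Gamma(m)\ge 2$.
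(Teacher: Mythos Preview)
Your proposal has a genuine gap at its core: the construction of the perturbation $\eta$ is not carried out, and in fact cannot work as you describe. You want a signed function on the edges of $S^{*}$ that satisfies the balance condition, is linearly independent of $m^{*}$, and is supported on $S^{*}$. But the existence of such an $\eta$ is \emph{equivalent} to $m^{*}$ not being extremal, which is exactly what you are trying to prove. Invoking Lemma~\ref{lemma:evil-paths} gives you a path between two boundary edges, not a balanced flow; ``propagating via the descendance mechanism'' builds the extremal measure $\mu_{e}$ from a single root edge and does not produce a signed measure with two prescribed boundary values. Your assertion that ``any inconsistency would force an evil loop'' is not justified: a skeleton can be rigid and still carry only a one-dimensional space of balanced densities, regardless of how many boundary edges it has (indeed, that is what ``skeleton'' means). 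Finally, the closing remark about the $60^{\circ}$ asymmetry between $\mathcal{M}_{r}$ and $\mathcal{M}_{n-r}^{*}$ does not supply a mechanism; the two cones are related by a reflection and have identical local structure, so extremality cannot be broken on one side by an argument that is blind to which side you are on.

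The paper's proof proceeds along entirely different lines and does not attempt to perturb $m^{*}$ directly. It first observes that rigidity forces $\partial m$ to be an extreme ray of the polyhedral cone $C_{r}$; since $\partial m$ avoids $\Gamma(m)\ge 2$ of the $3r$ chamber facets, and an extreme ray must lie on at least $\dim C_{r}-1=3r-1$ facets, $\partial m$ must lie on some \emph{regular} facet, i.e., satisfy a Horn equality. That equality yields a nontrivial clockwise overlay $(m_{1},m_{2})$ with $m_{1}$ a contraction of $m$, and then Proposition~\ref{prop:support-of-clockwise-measure} gives $0\ne m_{1}^{*}\le m^{*}$. After normalising so that $m^{*}=\mu_{e}$, integrality forces $m_{1}^{*}=m^{*}$ and hence $m_{1}=m$, a contradiction. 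The key ingredients you are missing are the facet count for $C_{r}$ and the inequality $m_{1}^{*}\le m^{*}$ for clockwise overlays; neither is recoverable from the local path-and-propagation picture you sketch.
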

\begin{proof}
Assume that $m$ and $m^{*}$ are both extremal, and $\Gamma(m)>1$.
Note first that $\partial m$ is extremal in $C_{r}$. Indeed, in
the contrary case, we would have $\partial m=\partial m_{1}+\partial m_{2}$
with $\partial m_{1}$ not a positive multiple of $\partial m$. This
would however imply $m=m_{1}+m_{2}$ by rigidity, and hence $m_{1}$
is a multiple of $m$, a contradiction. 

Next, since $\Gamma(m)=\Gamma(m')$ for homologous $m,m'$, we may
assume that $m^{*}=\mu_{e}$ for some root edge $e$. Indeed, $m^{*}=m^{*}(e)\mu_{e}$
is homologous to $\mu_{e}$, and therefore $m$ is homologous to $\mu_{e}^{*}$.

The definition of $\Gamma(m)$ implies that $\partial m$ belongs
to $3r-\Gamma(m)=\dim C_{r}-\Gamma(m)$ chamber facets. However, an
extremal measure must belong to at least $\dim C_{r}-1$ facets, and
hence $\partial m$ belongs to at least one regular facet. As seen
earlier, there must then exist a clockwise overlay $(m_{1},m_{2})$
such that $m_{1}$ is obtained by contracting $m$. It follows from
Proposition \ref{prop:support-of-clockwise-measure} that $0\ne m_{1}^{*}\le m^{*}$.
Since $m_{1}^{*}$ has integer densities, we must have $m_{1}^{*}=m^{*}$,
and this implies that $m_{1}=m$, a contradiction.
\end{proof}
Thus the repeated application of the reduction procedure to $m$ and
$m^{*}$ leads eventually to one of the three skeletons pictured below.\begin{center}

\includegraphics[scale=0.7]{extras.eps}

\end{center}For these, the intersection problem is completely trivial. Indeed,
consider the first of the three on $\triangle_{r}$, and with $\omega(m)=s$.
We have then $I=\{1,2,\dots,r\}$ and $J=K=\{ s+1,s+2,\dots,s+r\}$,
and the desired element in \[
S(\mathcal{E},I)\cap S(\mathcal{F},J)\cap S(\mathcal{G},K)\]
 is simply $E_{r}$. Thus $A(m)$ is true for this measure. To show
that $B(m)$ is true as well, we must verify that generically we also
have\[
\tau(E_{r}\wedge F_{\ell})=\tau(E_{r}\wedge G_{\ell})=\max\{0,r+\ell-n\}\]
for $\ell=1,2,\dots n$. This follows easily from the following result.

\begin{prop}
\label{pro:Zariski}Let $E$ and $F$ be two projections in a finite
factor $\mathcal{A}$. There is an open dense set $O\subset\mathcal{U}(\mathcal{A})$
such that\[
\tau(E\wedge UFU^{*})=\max\{0,\tau(E)+\tau(F)-\tau(1)\}\]
for $U\in O$.
\end{prop}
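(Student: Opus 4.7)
Proof proposal.

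Set $T(U):=\tau(E\wedge UFU^{*})$ and $c:=\max\{0,\tau(E)+\tau(F)-\tau(1)\}$. Kaplansky's formula gives $T(U)\geq c$ for every unitary $U$, so it suffices to produce a norm-dense open subset of $\mathcal{U}(\mathcal{A})$ on which $T=c$. The plan is to establish upper semi-continuity of $T$, use free unitary Brownian motion to get density of $\{T=c\}$, and then upgrade to openness via a spectral gap.

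The first step is to show $T$ is upper semi-continuous in norm. Writing $X_U:=EUFU^{*}E$, we have $0\leq X_U\leq E$, so the function $t\mapsto t(1-t)$ being nonnegative on $[0,1]$ forces $(X_U^n)_{n\geq 1}$ to be a decreasing sequence of positive operators converging in the strong operator topology to $\chi_{\{1\}}(X_U)=E\wedge UFU^{*}$. Normality of $\tau$ then gives
\[
T(U)=\inf_n\tau(X_U^n),
\]
and since each $U\mapsto\tau(X_U^n)$ is a (norm-continuous) polynomial, $T$ is u.s.c.

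The second step is density of $\{T=c\}$. Given $U_1\in\mathcal{U}(\mathcal{A})$ and $\varepsilon>0$, replace $F$ by $U_1FU_1^{*}$ to reduce to finding $U$ near the identity. Here I would invoke the free unitary Brownian motion machinery developed in Section 8: inside $\mathcal{A}$ one arranges a path $t\mapsto V_t$ of unitaries with $V_0=1$, converging to $1$ in norm as $t\to 0^{+}$, such that for every $t>0$ the pair $(E,V_tFV_t^{*})$ is in \emph{free position}. For free projections of traces $\tau(E),\tau(F)$ in a finite factor, the spectral distribution of the compression $EV_tFV_t^{*}E$ is explicitly known (the free multiplicative analogue of the Jacobi distribution): its only possible mass at $1$ is an atom of weight exactly $c$, and the remaining spectrum is absolutely continuous on an interval $[a,b]\subset[0,1)$ with $b<1$. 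In particular $T(V_t)=c$ and the atom at $1$ (when present) is isolated.

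The third step upgrades density to a dense open set. Let $U_0$ be any free-position unitary produced in Step 2. The isolation of $1$ in the spectrum of $X_{U_0}$ (established above) together with norm continuity of $U\mapsto X_U$ implies that the spectral projection $\chi_{\{1\}}(X_U)$ varies norm-continuously in a neighborhood of $U_0$: this is the standard continuity of spectral projections at a spectral gap, applied in the finite factor $\mathcal{A}$. Consequently $\tau(\chi_{\{1\}}(X_U))$ is constant (equal to $c$) on a norm-neighborhood of $U_0$. Let $O$ be the union of all such neighborhoods; then $O$ is norm-open, $O\subset\{T=c\}$, and $O$ is dense because by Step 2 every $U_1$ is a norm-limit of free-position unitaries.

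The main obstacle is Step 2: producing the free-position unitary arbitrarily close to a prescribed $U_1$ inside the factor $\mathcal{A}$ itself (not merely inside a free-product enlargement). This is precisely what the free unitary Brownian motion argument of Section 8 is designed to supply; once it is available, Steps 1 and 3 are routine. An alternative for matrix factors $M_n(\mathbb{C})$ is to observe that $\{T>c\}$ is cut out by the vanishing of certain determinantal minors, hence is Zariski closed, so $\{T=c\}$ is Zariski open and dense directly; but this argument does not extend to the type II$_{1}$ case and so the Brownian motion approach is needed for the general statement.
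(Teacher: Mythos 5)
Your Step 2 is a genuine gap, and it is not one that Section 8 of the paper can fill. The free unitary Brownian motion argument there is explicitly carried out in a larger factor obtained as a free product, with the Brownian motion free from the given projections; it does not produce unitaries in $\mathcal{U}(\mathcal{A})$ itself. In general no unitary of $\mathcal{A}$ free from $\{E,F\}$ (or from $\{E,U_{1}FU_{1}^{*}\}$) exists: in $M_{n}(\mathbb{C})$ freeness of a Haar (or Brownian) unitary from a nontrivial subalgebra is impossible, and in the hyperfinite II$_{1}$ factor such a unitary would generate, together with $\{E,F\}''$, a nonhyperfinite subalgebra, contradicting the fact that every von Neumann subalgebra of an injective finite algebra is injective. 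Since the proposition asserts density and openness inside $\mathcal{U}(\mathcal{A})$, and since $T(U)=\tau(E\wedge UFU^{*})$ is only upper semicontinuous -- which transfers ``general position'' in the wrong direction -- you cannot pass from a free-position unitary in the enlarged free product back to one in $\mathcal{A}$. Your Zariski fallback does cover $M_{n}(\mathbb{C})$, but the II$_{1}$ case, which is the one at stake, is left unproved. There is also a secondary problem in Step 3: the claimed spectral gap at $1$ for $EV_{t}FV_{t}^{*}E$ fails precisely in the borderline case $\tau(E)+\tau(F)=\tau(1)$, where for a free pair the absolutely continuous spectrum of the compression extends up to $1$ with no atom there, so $\chi_{\{1\}}$ is not norm-continuous at $V_{t}$ and openness does not follow.

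For comparison, the paper's proof is elementary and sidesteps both issues. After replacing $E,F$ by their complements one may assume $\tau(E)+\tau(F)\le\tau(1)$, and since $\mathcal{A}$ is a factor one may take $F\le E^{\perp}$; then $\tau(E\wedge UFU^{*})=0$ whenever $FUF$ is invertible in the corner $F\mathcal{A}F$, which is an open condition in norm. Density of such $U$ is obtained by writing an arbitrary unitary as a $2\times2$ operator matrix over a corner algebra, parametrized by a contraction $T$ and two unitaries, and approximating $T$ in norm by an invertible element -- possible because $\mathcal{A}$ is a finite von Neumann algebra. If you want to keep a free-probability flavor, you would need an intrinsic substitute for Step 2 valid in every finite factor; as written, the key density step of your argument rests on objects that do not exist inside $\mathcal{A}$.
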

\begin{proof}
Replacing $E$ and $F$ by $E^{\perp}$ and $F^{\perp}$ if necessary,
we may assume that $\tau(E)+\tau(F)\le\tau(1)$. Since $\mathcal{A}$
is a factor, we can replace $F$ with any other projection with the
same trace. In particular, we may assume that $F\le E^{\perp}$. The
condition $\tau(E\wedge UFU^{*})=0$ is satisfied if the operator
$FUF$ is invertible on the range of $F$. The proposition follows
because the set $O$ of unitaries satisfying this condition is a dense
open set in $\mathcal{U}(\mathcal{A})$. To verify this fact, it suffices
to consider the case in which the algebra $\mathcal{A}$ is of the
form $\mathcal{A}=\mathcal{B}\otimes M_{2}(\mathbb{C})$ for some
other finite factor $\mathcal{B}$, and \[
F=\left[\begin{array}{cc}
1 & 0\\
0 & 0\end{array}\right].\]
An arbitary unitary $U\in\mathcal{A}$ can be written as\[
U=\left[\begin{array}{cc}
T & (I-TT^{*})^{1/2}W\\
V(I-T^{*}T)^{1/2} & -VT^{*}W\end{array}\right],\]
where $T,V,W\in\mathcal{B}$, $V$ and $W$ are unitary, and $\| T\|\le1$.
Since $\mathcal{B}$ is a finite von Neumann algebra, $T$ can be
approximated arbitrarily well in norm by an invertible operator $T'$,
in which case $U$ is approximated in norm by the operator\[
U'=\left[\begin{array}{cc}
T' & (I-T'T^{\prime*})^{1/2}W\\
V(I-T^{\prime*}T')^{1/2} & -VT^{\prime*}W\end{array}\right]\]
with $FU'F$ invertible. In finite dimensions, the complement of $O$
is defined by the single homogeneous polynomial equation $\det(FUF+F^{\perp})=0$.
Thus $O$ is open in the Zariski topology.
\end{proof}
\begin{cor}
Properties $A(m)$ and $B(m)$ are true for all rigid measures $m$.
\end{cor}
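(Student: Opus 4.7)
The plan is to induct on $\kappa(m)$, combining Proposition~\ref{prop:reduction-of-AB(m)} with the duality $m\leftrightarrow m^{*}$, which satisfies $A(m)\Leftrightarrow A(m^{*})$ and $B(m)\Leftrightarrow B(m^{*})$ and preserves the invariant via $\kappa(m)=\kappa^{*}(m^{*})$. Each application of these tools strictly decreases $\kappa$ unless both $m$ and $m^{*}$ are extremal; by the preceding proposition, this forces $\Gamma(m)=1$, so that $m$ is (up to rotation) one of the three skeletons pictured just before the corollary. These constitute the base of the induction.

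For the base case I would take the leftmost of the three skeletons (the others follow by symmetry) on $\triangle_{r}$ with $\omega(m)=s$, so that $n=r+s$, $I=\{1,\dots,r\}$, and $J=K=\{s+1,\dots,s+r\}$. The projection $P=E_{r}$ manifestly lies in $S(\mathcal{E},I)\cap S(\mathcal{F},J)\cap S(\mathcal{G},K)$, which proves $A(m)$. For $B(m)$ I would exhibit the lattice polynomial $p=e_{r}$: the conditions $\tau(E_{r}\wedge E_{i})=\min\{i,r\}$ are automatic from the flag structure, while the required identities $\tau(E_{r}\wedge F_{j})=\tau(E_{r}\wedge G_{k})=\max\{0,r+j-n\}$ (or the analogue for $k$) follow from Proposition~\ref{pro:Zariski} applied to the generic flags $\mathcal{F}$ and $\mathcal{G}$ relative to $\mathcal{E}$, which forces two projections in generic relative position to have intersection of the smallest possible trace.

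For the inductive step, suppose the corollary holds for all rigid measures with $\kappa$ strictly smaller than $\kappa(m)$ and that $m$ is not in the base case. If $m$ is not extremal, decompose $m=\sum_{\ell=1}^{p}\mu_{\ell}$ along its skeletons ordered so that $S_{i}\prec S_{j}$ implies $i\le j$ (possible by Section~3), set $m'=\sum_{\ell\ge 2}\mu_{\ell}$, and form $\widetilde{\mu_{1}}$ as in Proposition~\ref{prop:reduction-of-AB(m)}. Because $p\ge 2$, neither $\mathrm{supp}(\mu_{1})$ nor $\mathrm{supp}(m')$ exhausts $\mathrm{supp}(m)$ (each omits the root edges of the other), so $\kappa(\widetilde{\mu_{1}})=\kappa(\mu_{1})<\kappa(m)$ and $\kappa(m')<\kappa(m)$; the inductive hypothesis applies to both, and Proposition~\ref{prop:reduction-of-AB(m)} delivers $A(m)$ and $B(m)$. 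If $m$ is extremal but $m^{*}$ is not, apply the same reduction to $m^{*}\in\mathcal{M}_{r}^{*}$ via the evident $*$-analogue of the proposition, then translate back through the duality. The subtlety to watch in this argument is the strict decrease of $\kappa$ at each step; this ultimately rests on the combinatorial fact, built into the analysis of root edges and inequivalent ancestors in Section~3, that distinct skeletal summands in a rigid measure retain root edges absent from the other summands, and hence contribute genuinely distinct parallelograms to the inflated puzzle.
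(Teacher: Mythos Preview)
Your proposal is correct and follows essentially the same route as the paper: well-founded induction on $\kappa(m)$, with Proposition~\ref{prop:reduction-of-AB(m)} handling the non-extremal case, duality handling the case where $m^{*}$ is non-extremal, and the preceding proposition forcing the base case $\Gamma(m)=1$, which is then resolved by $P=E_{r}$ together with Proposition~\ref{pro:Zariski}. One small slip: the dual measure lives in $\mathcal{M}_{n-r}^{*}$, not $\mathcal{M}_{r}^{*}$, but this does not affect the argument.
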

This proves finally Theorems \ref{th:intersections-not-generic} and
\ref{th:intersections-generic-case}. The fact that Theorem \ref{th:the-Horn-inequalities-in-any-A}
follows from Theorem \ref{th:intersections-not-generic} was already
shown in \cite{Ber-Li-Romega}.

\section{Some Illustrations}

We have just seen that proving property $A(I,J,K)$ or $B(I,J,K)$
can be reduced, in case $c_{IJK}=1$, to the case in which the associated
measure $m$ has precisely one attachment point. We will illustrate
how this reduction works in a few cases.

Given a measure $m\in\mathcal{M}_{r}$, a point $A_{\ell}$, $\ell=1,2,\dots,r$,
is an attachment point of $m$ precisely when $m(A_{\ell}X_{\ell})>0$.
The solution to the associated Schubert intersection problem will
only depend on the projections $E_{i(\ell)}$ where $\ell$ is an
attachment point. These projections, and the analogous $F_{j(\ell)},G_{k(\ell)},$
will be called the \emph{attachment projections} for the problem.
With the notation Proposition \ref{prop:reduction-of-AB(m)}, the
attachment projections of $\widetilde{\mu_{1}}$ are exactly the same
as those of $\mu_{1}$, and are therefore among the attachment projections
of $m$. The attachment projections of $m^{*}$ are of the form $P^{\perp}=1-P$,
where $P$ is an attachment projection for $m$. These observations
allow us to construct solutions to intersection problems without actually
having to construct the measure $\widetilde{\mu_{1}}$ and focus instead
on the attachment projections of $\mu_{1}$.

We proceed now to solve the intersection problems associated with
some skeletons. Consider first an extreme measure $m\in\mathcal{M}_{r}$
with two attachment points. The following picture shows the supports
of $m$ and $m^{*}$.\begin{center}

\includegraphics[scale=1]{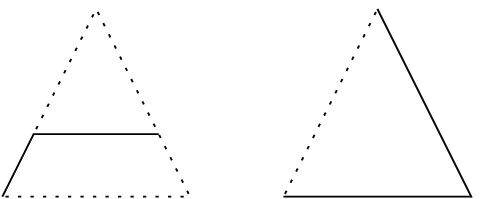}

\end{center}For the illustration we took $r=3$ and density 3 on the support,
but the results will hold for the general case. Note that $m^{*}$
is a sum of two extremal measures with one attachment point each.
If $X$ and $Z$ are the attachment projections of $m$, the attachment
projections of these skeletons are $X^{\perp}$ and $Z^{\perp}$.
Neither of the two skeletons precedes the other, and following the
method of Proposition \ref{prop:reduction-of-AB(m)}, we see that
the solution of the intersection problem associated with $m^{*}$
is generically $X^{\perp}\wedge Z^{\perp}$. It follows that the intersection
problem associated with $m$ has the generic solution $X\vee Y$.

There are two kinds of skeletons with three attachment points. The
first one, and its dual, are illustrated below.\begin{center}

\includegraphics[scale=1]{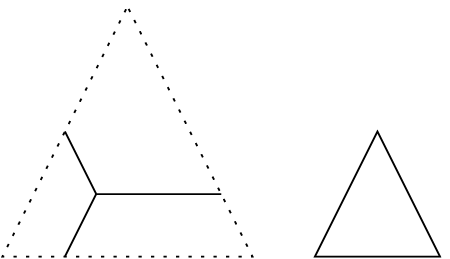}

\end{center}Assume that the attachment projections are $X,Y$ and $Z$. As in
the preceding situation, $m^{*}$ is a sum of three extremal measures
with one attachment point, and there are no precedence relations among
the skeletons. It follows that the generic solution of the intersection
problem is $X\vee Y\vee Z$.

The two cases just mentioned correspond to the reductions considered
in \cite{th-th} for finite dimensions, and in \cite{CoDy-reduction}
for the factor case. Note however that these papers also apply these
reductions when $c_{IJK}>1$.

Consider next the other kind of skeleton with three attachment points,
and with attachment projections $X,Y,Z$.\begin{center}

\includegraphics[scale=1]{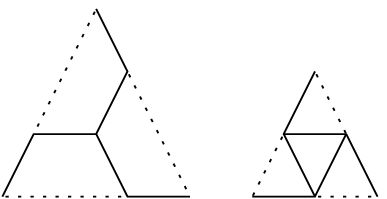}

\end{center}In this case, $m^{*}$ is the sum of three extremal measures with
two attachment points each, and with no precedence relations. The
intersection problems associated with the three skeletons have then
generic solutions $X^{\perp}\vee Y^{\perp},X^{\perp}\vee Z^{\perp}$,
and $Y^{\perp}\vee Z^{\perp}$. According to Proposition \ref{prop:reduction-of-AB(m)},
the solution of the intersection problem for $m^{*}$ will be (generically)
the intersection of these three projections, so that the problem associated
with $m$ has the solution\[
(X\wedge Y)\vee(X\wedge Z)\vee(Y\wedge Z).\]

Several of the proofs of Horn inequalities in the literature can now
be deduced by considering rigid measures which are sums of extremal
measures with 1,2 or 3 attachment points. Consider, for instance,
a measure $m\in\mathcal{M}_{r}$ defined by \[
m=\rho+\sum_{\ell=1}^{r}(\mu_{\ell}+\nu_{\ell}),\]
where $\rho$ has attachment point $C_{r}$, $\mu_{1}$ has attachment
point $A_{r}$, $\nu_{1}$ has attachment point $B_{r}$, $\mu_{\ell}$
has attachment points $A_{r-\ell+1}$ and $C_{\ell-1}$, and $\nu_{\ell}$
has attachment points $B_{r-\ell+1}$ and $C_{\ell-1}$ for $\ell>1$.\begin{center}

\includegraphics[scale=1]{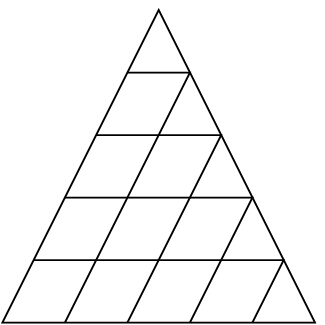}

\end{center}The only precedence relations are $\mu_{\ell}\prec_{0}\nu_{k}$ and
$\nu_{\ell}\prec_{0}\mu_{k}$ for $\ell<k$. Generically, the associated
intersection problem is solved as follows. Set $P_{0}=G_{r}$ and\[
P_{\ell+1}=[(G_{\ell}\wedge P_{\ell})\vee(F_{r-\ell}\wedge P_{\ell})]\wedge[(G_{\ell}\wedge P_{\ell})\vee(E_{r-\ell}\wedge P_{\ell})]\]
 for $\ell=1,2,\dots,r-1$. The space $P_{r}$ is the generic solution.
The sets $I,J,K$ associated with $m$ are easily calculated. Using
the notations\[
c=\omega(\rho),\quad a_{\ell}=\omega(\mu_{\ell}),\quad b_{\ell}=\omega(\nu_{\ell})\quad{\rm for}\;\ell=1,2,\dots,r,\]
we have \[
n=r+c+\sum_{\ell=1}^{r}(a_{\ell}+b_{\ell}),\]
and $I=\{ n+1-(a_{1}+a_{2}+\cdots+a_{\ell}+\ell):\ell=1,2,\dots,r\}$,
$J=\{ n+1-(b_{1}+b_{2}+\cdots+b_{\ell}+\ell):\ell=1,2,\dots,r\}$,
and $K=\{ a_{1}+b_{1}+\cdots+a_{\ell}+b_{\ell}+\ell:\ell=1,2,\dots,r\}$.
These sets yield the eigenvalue inequalities proved in \cite{tho-free1}.

Consider next sequences of integers\[
0\le z_{1}\le z_{2}\le\cdots\le z_{p},\quad0\le w_{1}\le w_{2}\le\cdots\le w_{p}\]
such that $z_{p}+w_{p}\le r$, and consider the measure $m\in\mathcal{M}_{r}$
defined by \[
m=\sum_{i=1}^{p}\mu_{i},\]
 where $\mu_{\ell}$ has attachment points $A_{z_{\ell}},B_{w_{\ell}}$,
and $C_{r-z_{\ell}-w_{\ell}}$.\begin{center}

\includegraphics[scale=1]{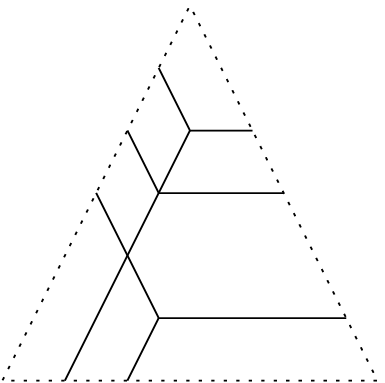}

\end{center}The illustration uses $p=3$, $r=6$, $z_{1}=1,$$z_{2}=2$, $z_{3}=3$,
$w_{1}=w_{2}=1$, and $w_{3}=2$. We have $\mu_{\ell}\prec_{0}\mu_{k}$
only when $\ell<k$, $w_{\ell}<w_{k}$ and $z_{\ell}<z_{k}$. If we
set $P_{1}=E_{z_{1}}\vee F_{w_{1}}\vee G_{r-z_{1}-w_{1}}$ and\[
P_{\ell+1}=(E_{z_{\ell+1}}\wedge P_{\ell})\vee(F_{w_{\ell+1}}\wedge P_{\ell})\vee(G_{r-z_{\ell+1}-w_{\ell+1}}\wedge P_{\ell})\]
for $\ell=1,2,\dots,d-1$, then $P_{d}$ is the generic solution of
the intersection problem. Assume that $\omega(\mu_{i})=1$ for all
$i$, and use the notation \[
1_{x<y}=\begin{cases}
1 & {\rm if}\: x<y,\\
0 & {\rm if}\: x\ge y.\end{cases}\]
 Then for the corresponding intersection problem we have $n=r+p$,
$I(\ell)=\ell+\sum_{i=1}^{p}1_{z_{i}<\ell}$, $J(\ell)=\ell+\sum_{i=1}^{p}1_{w_{i}<\ell}$,
and $n+1-K(r+1-\ell)=\ell+\sum_{i=1}^{p}1_{w_{i}+z_{i}<\ell}$ for
$\ell=1,2,\dots,r$. These sets yield the eigenvalue inequalities
proved in \cite{tho-free2}.

One can produce such families of inequalities using more complicated
skeletons. Observe for instance that, given integers $a,b,c,d$ such
that $a+b+c+d=r$, there exists a skeleton in $\triangle_{r}$ with
attachment points $A_{a},A_{a+b+c},B_{b+d}$, and $C_{c+d}$. Call
$\mu_{a,b,c,d}$ the smallest extremal measure with integer densities
supported by this skeleton. A measure of the form\[
m=\sum_{\ell=1}^{p}\mu_{a_{\ell},b_{\ell},c_{\ell},d_{\ell}}\]
 will be rigid if the following conditions are satisfied:\[
a_{\ell}\le a_{\ell+1},d_{\ell}\le d_{\ell+1},c_{\ell}+d_{\ell}\le c_{\ell+1}+d_{\ell+1},b_{\ell}+d_{\ell}\le b_{\ell+1}+d_{\ell+1}\]
for $\ell=1,2,\dots,p-1$. Moreover, $\mu_{a_{\ell},b_{\ell},c_{\ell},d_{\ell}}\prec\mu_{a_{\ell'},b_{\ell'},c_{\ell'},d_{\ell'}}$
implies $\ell\le\ell'$. The corresponding intersection problem will
be solved by dealing successively with these summands. The reader
will have no difficulty writing out the sets $I,J,K\subset\{1,2,\dots,n\}$,
where $n=r+2p$. The following figure illustrates the case $p=2$
with $r=8$, $a_{1}=d_{1}=1$, $b_{1}=c_{1}=3$, $a_{2}=2$, $b_{2}=c_{2}=1$,
and $d_{2}=4$.\begin{center}

\includegraphics[scale=.7]{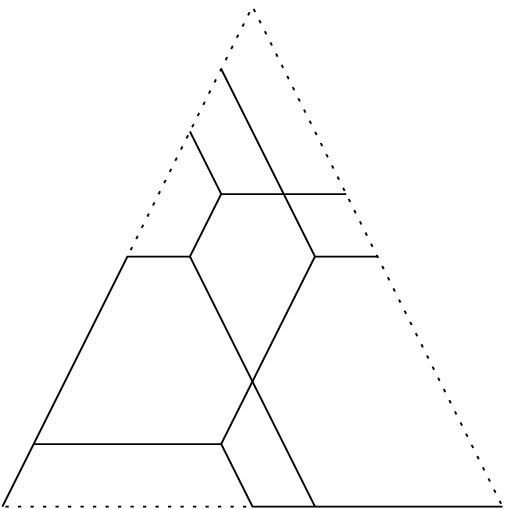}

\end{center}

We deal next with a somewhat more complicated extremal measure, whose
support has the shape pictured below along with its dual.\begin{center}

\includegraphics[scale=.65]{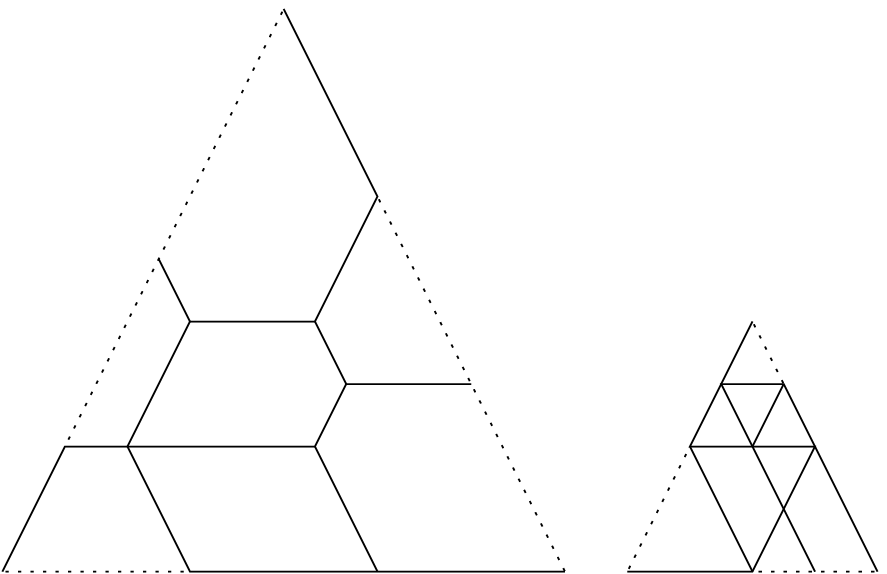}

\end{center}Denote the attachment projections on the $A$ side by $X_{1}\le X_{2}$
, on the $B$ side by $Y_{1}\le Y_{2}$, and on the $C$ side $Z_{1}\le Z_{2}$.
In the illustration we used the measure $m$ which assigns unit mass
to the root edges of the skeleton, and this measure has weight $\omega(m)=4$.
The measure $m^{*}$ is a sum of six extremal measures with supports
pictured below.\begin{center}

\includegraphics[scale=.65]{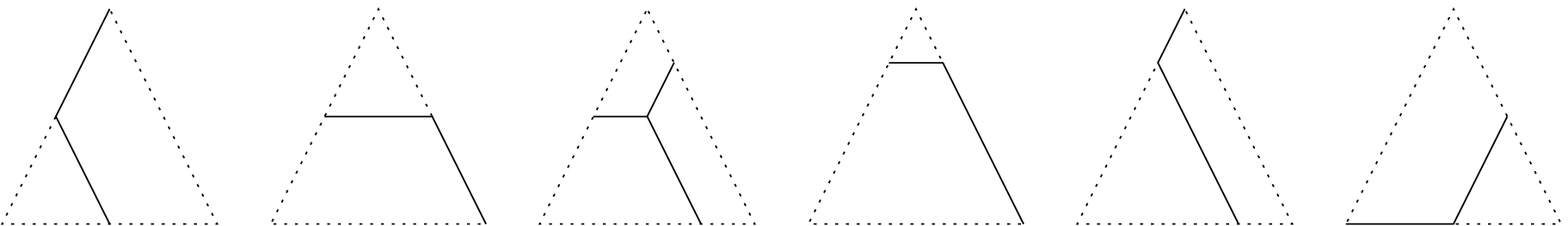}

\end{center}Denote the corresponding measures in $\mathcal{M}_{4}^{*}$ by $\mu_{\ell}$
with $\ell=1,2,\dots,6.$ the attachment projections for these measures
are easily found. For instance, $\mu_{1}$ has attachment projections
$X_{2}^{\perp},Y_{1}^{\perp}$, and $\mu_{3}$ has attachment projections
$X_{2}^{\perp},Y_{2}^{\perp}$, and $Z_{2}^{\perp}$. Recalling that
clocks run backwards in $\mathcal{M}^{*}$, we easily determine that\[
\mu_{1}\prec_{0}\mu_{4}\prec_{0}\mu_{6},\quad\mu_{2}\prec_{0}\mu_{5}\prec_{0}\mu_{6},\quad\mu_{3}\prec_{0}\mu_{6},\]
and no other direct comparisons occur. It is now easy to see that
the generic solution is obtained as follows. Form first the projection\[
P_{1}=(X_{2}\wedge Y_{1})\vee(X_{2}\wedge Z_{2})\vee(X_{2}\wedge Y_{2}\wedge Z_{1})\]
corresponding with the measure $\mu_{1}+\mu_{2}+\mu_{3}$. Next calculate\[
P_{2}=[(X_{1}\vee P_{1})\wedge(Z_{1}\wedge P_{1})]\vee[(X_{1}\vee P_{1})\wedge(Y_{2}\wedge P_{1})]\]
corresponding with $\mu_{4}+\mu_{5}$. Finally, the solution is\[
P=(Y_{2}\vee P_{2})\wedge(Z_{2}\wedge P_{2}).\]

The examples above illustrate the fact that passing from an extremal
measure to its dual yields a dramatic simplification of the intersection
problem. We offer, mostly to further illustrate this point, an example
of a rather complicated skeleton. The reader will easily identify
15 skeletons in the dual picture.\begin{center}

\includegraphics[scale=0.7]{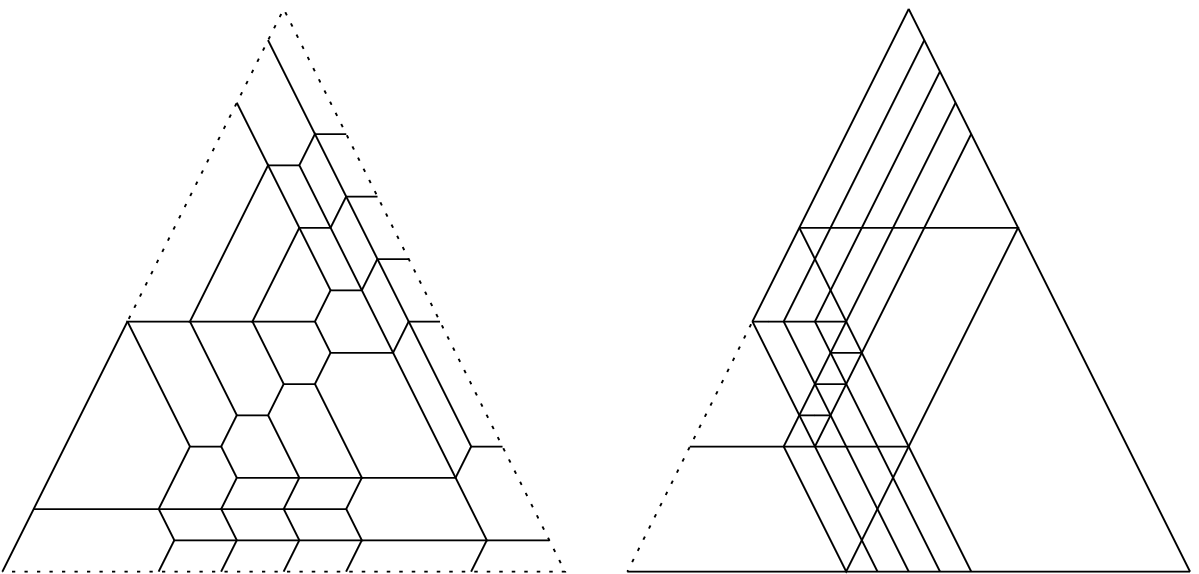}

\end{center}The number 15 is, not coincidentally, the number of attachment points
of the skeleton.

\section{Connection to Invariant Subspaces}

The smallest example of sets $I,J,K$ with $c_{IJK}>1$ is given by
$I=J=K=\{2,4,6\}\subset\{1,2,3,4,5,6\}$. Fix a II$_{1}$ factor $A$
with trace normalized so that $\tau(1)=2$, and fix an element $T\in\mathcal{A}$.
We will view $\mathcal{A}$ as an algebra of operators on a Hilbert
space $H$, and consider the factor $\mathcal{B}=\mathcal{A}\otimes M_{3}(\mathbb{C})$
acting on $H\oplus H\oplus H$. That is, $B$ consists of all operator
matrices $[T_{ij}]_{i,j=1}^{3}$ with $T_{ij}\in\mathcal{A}$, with
trace defined by\[
\tau([T_{ij}]_{i,j=1}^{3})=\sum_{j=1}^{3}\tau(T_{jj}).\]
We construct now the following spaces:\begin{eqnarray*}
X_{2} & = & \{\xi\oplus0\oplus0:\xi\in H\},\\
Y_{2} & = & \{0\oplus\xi\oplus0:\xi\in H\},\\
Z_{2} & = & \{0\oplus0\oplus\xi:\xi\in H\},\\
X_{4} & = & \{\xi\oplus\eta\oplus\eta:\xi,\eta\in H\},\\
Y_{4} & = & \{\eta\oplus\xi\oplus\eta:\xi,\eta\in H\},\\
Z_{4} & = & \{\eta\oplus T\eta\oplus\xi:\xi,\eta\in H\}.\end{eqnarray*}
It is easy to see that the orthogonal projections $E_{2},F_{2},G_{2},E_{4},F_{4},G_{4}$
onto these spaces belong to $A$ and \[
\tau(E_{j})=\tau(F_{j})=\tau(G_{j})=j,\quad j=2,4.\]
Indeed, we can write these projections explicitly:

\[
E_{2}=\left[\begin{array}{ccc}
1 & 0 & 0\\
0 & 0 & 0\\
0 & 0 & 0\end{array}\right],F_{2}=\left[\begin{array}{ccc}
0 & 0 & 0\\
0 & 1 & 0\\
0 & 0 & 0\end{array}\right],G_{2}=\left[\begin{array}{ccc}
0 & 0 & 0\\
0 & 0 & 0\\
0 & 0 & 1\end{array}\right],\]
\[
E_{4}=\left[\begin{array}{ccc}
1 & 0 & 0\\
0 & \frac{1}{2} & \frac{1}{2}\\
0 & \frac{1}{2} & \frac{1}{2}\end{array}\right],F_{4}=\left[\begin{array}{ccc}
\frac{1}{2} & 0 & \frac{1}{2}\\
0 & 1 & 0\\
\frac{1}{2} & 0 & \frac{1}{2}\end{array}\right],\]
\[
G_{4}=\left[\begin{array}{ccc}
(1+T^{*}T)^{-1} & (1+T^{*}T)^{-1}T^{*} & 0\\
T(1+T^{*}T)^{-1} & T(1+T^{*}T)^{-1}T^{*} & 0\\
0 & 0 & 1\end{array}\right].\]
The trace of $G_{4}$ is seen to be $4$ because $\left[\begin{array}{cc}
(1+T^{*}T)^{-1} & (1+T^{*}T)^{-1}T^{*}\\
T(1+T^{*}T)^{-1} & T(1+T^{*}T)^{-1}T^{*}\end{array}\right]$ is the range projection of the partial isometry $\left[\begin{array}{cc}
(1+T^{*}T)^{-1/2} & 0\\
T(1+T^{*}T)^{-1/2} & 0\end{array}\right]$ which has initial projection $\left[\begin{array}{cc}
1 & 0\\
0 & 0\end{array}\right]$. Assume that $P\in S(\mathcal{E},I)\cap S(\mathcal{F},J)\cap S(\mathcal{G},K)$.
In other words, $\tau(P)=3$, $\tau(P\wedge E_{2})\ge1$,$\tau(P\wedge F_{2})\ge1$,$\tau(P\wedge G_{2})\ge1$,$\tau(P\wedge E_{4})\ge2$,$\tau(P\wedge F_{4})\ge2$,
and $\tau(P\wedge G_{4})\ge2$. It follows then that there exist projections
$Q,Q',Q''\in A$ such that $\tau(Q)\ge1$, $\tau(Q')\ge1$, $\tau(Q'')\ge1$,
and $P\ge Q\oplus Q'\oplus Q''$, which implies that $\tau(Q)=\tau(Q')=\tau(Q'')=1$
and $P=Q\oplus Q'\oplus Q''$. Next observe that\[
P\wedge E_{4}=Q\oplus\left[(Q'\oplus Q'')\wedge\left[\begin{array}{cc}
\frac{1}{2} & \frac{1}{2}\\
\frac{1}{2} & \frac{1}{2}\end{array}\right]\right]=Q\oplus\left[\begin{array}{cc}
\frac{1}{2}Q'\wedge Q'' & \frac{1}{2}Q'\wedge Q''\\
\frac{1}{2}Q'\wedge Q'' & \frac{1}{2}Q'\wedge Q''\end{array}\right].\]
This projection must have trace at least $2$, and therefore $Q'=Q''$.
Analogously, the condition $\tau(P\wedge F_{4})\ge2$ implies that
$Q=Q''$. We conclude that $P=Q\oplus Q\oplus Q$. Finally, $\tau(P\wedge G_{4})\ge2$
will imply that $QTQ=TQ$, so that $Q$ is an invariant projection
for the operator $T$. Thus the solution of this particular intersection
problem implies the existence of invariant projections of trace $1$
for every $T\in\mathcal{A}$. In \cite{CoDy-reduction} it is shown
that this problem has an approximate solution. More precisely, given
$\varepsilon>0$, there exist projections $Q,Q_{1}\in\mathcal{A}$
such that $\tau(Q)=1$, $Q\le Q_{1}$, $\tau(Q_{1})<1+\varepsilon$,
and $Q_{1}TQ=TQ$. This leads to an approximate solution of the intersection
problem. One would expect that solving the intersection problem for
more complicated sets with $c_{IJK}>1$ would require considerable
progress in the study of II$_{1}$ factors.

\section{Applications of Free Probability}

In this brief section we give two applications of free products of
von Neumann algebras and free probability. First, we show that all
finite von Neumann algebras with a normal, faithful trace admit a
trace-preserving embedding into a factor of type II$_{1}$. This completes
the proof of the Horn inequalities for selfadjoint elements in such
algebras.

\begin{prop}
Let $\mathcal{A}_{j}$ be von Neumann algebras equipped with normal,
faithful, tracial states $\tau_{j}$, $j=1,2,$ and let $(\mathcal{A},\tau)=(\mathcal{A}_{1},\tau_{1})*(\mathcal{A}_{2},\tau_{2})$
be the free product von Neumann algebra. If $\mathcal{A}_{2}$ is
diffuse, i.e., it has no minimal projections, and $\mathcal{A}_{1}$
is not a copy of the complex numbers, then $\mathcal{A}$ is a ${\rm II}{}_{1}$
factor.
\end{prop}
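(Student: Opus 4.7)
The plan is to realize $\mathcal{A}=\mathcal{A}_{1}*\mathcal{A}_{2}$ as Voiculescu's reduced free product von Neumann algebra. Form the Hilbert space
\begin{equation*}
L^{2}(\mathcal{A},\tau)=\mathbb{C}1\oplus\bigoplus_{n\geq 1}\bigoplus_{i_{1}\neq i_{2}\neq\cdots\neq i_{n}}L^{2}_{0}(\mathcal{A}_{i_{1}})\otimes\cdots\otimes L^{2}_{0}(\mathcal{A}_{i_{n}}),
\end{equation*}
let each $\mathcal{A}_{j}$ act by the standard free-product creation/annihilation formulas for left multiplication, and define $\mathcal{A}$ as the von Neumann algebra generated inside $B(L^{2}(\mathcal{A},\tau))$. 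The vector state $\tau$ associated to the vacuum $1$ is then a faithful, normal, tracial extension of both $\tau_{j}$ under which $\mathcal{A}_{1}$ and $\mathcal{A}_{2}$ are free. Diffuseness of $\mathcal{A}_{2}$ already makes $\mathcal{A}$ infinite-dimensional, so it suffices to prove $Z(\mathcal{A})=\mathbb{C}1$ in order to conclude $\mathcal{A}$ is a II$_{1}$ factor.

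Fix $z\in Z(\mathcal{A})$; replacing $z$ by its real or imaginary part, we may assume $z=z^{*}$. The argument splits in two steps. Step one: show $z\in\mathcal{A}_{2}$. Since $\mathcal{A}_{2}$ is diffuse, it contains a Haar unitary $u$, i.e.\ $\tau(u^{n})=0$ for all $n\in\mathbb{Z}\setminus\{0\}$, and centrality gives $uzu^{*}=z$. Expand $z$ in the free-product $L^{2}$-decomposition above and track the action of left and right multiplication by $u$ on each alternating centered tensor; combined with the orthogonality of the powers $\{u^{n}\}$, this standard Fourier-type computation in free products forces every coefficient of $z$ supported on a word containing a letter from $\mathcal{A}_{1}^{\circ}$ to vanish. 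Hence $z\in L^{2}(\mathcal{A}_{2})\cap\mathcal{A}=\mathcal{A}_{2}$.

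Step two: use the hypothesis $\mathcal{A}_{1}\neq\mathbb{C}$ to pick a self-adjoint $a\in\mathcal{A}_{1}$ with $\tau_{1}(a)=0$ and $a\neq 0$. Put $z_{0}=z-\tau(z)1\in\mathcal{A}_{2}^{\circ}$. Centrality of $z$ gives $[a,z_{0}]=0$, so $(az_{0})^{2}=a^{2}z_{0}^{2}$; applying $\tau$ and invoking freeness on both sides,
\begin{equation*}
0 \;=\; \tau(az_{0}az_{0}) \;=\; \tau(a^{2}z_{0}^{2}) \;=\; \tau(a^{2})\,\tau(z_{0}^{2}),
\end{equation*}
since the first equality is the vanishing of the trace of an alternating product of centered elements from free subalgebras, and the last uses that $a^{2}\in\mathcal{A}_{1}$ and $z_{0}^{2}\in\mathcal{A}_{2}$ are free. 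Faithfulness of $\tau$ with $a\neq 0$ gives $\tau(a^{2})>0$, whence $\tau(z_{0}^{2})=0$, i.e.\ $z_{0}=0$ and $z=\tau(z)1\in\mathbb{C}1$.

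The main obstacle is step one: although Fourier expansion along a Haar unitary is a well-known device in free probability (essentially the computation of the relative commutant of a diffuse subalgebra inside a free product), it demands bookkeeping on alternating words of arbitrary length and a careful invocation of normality to pass between $L^{2}$-expansions and genuine elements of $\mathcal{A}$. Step two is by contrast a short moment calculation once $z\in\mathcal{A}_{2}$ is in hand.
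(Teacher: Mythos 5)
Your argument is correct, but it follows a genuinely different route from the paper. The paper's proof is very short and leans on an external C*-algebraic result: it observes that the C*-subalgebra generated by the copies of $\mathcal{A}_1$ and $\mathcal{A}_2$ is the reduced C*-algebra free product, invokes Dykema's theorem that this reduced free product has a unique tracial state, and then uses strong density to conclude that $\mathcal{A}$ has a unique normal tracial state, which together with infinite dimensionality forces $\mathcal{A}$ to be a II$_1$ factor. You instead compute the center directly: first you show that any central element lies in $\mathcal{A}_2$ by commuting it past a Haar unitary $u\in\mathcal{A}_2$ (this is the standard fact that the relative commutant of a diffuse subalgebra of $\mathcal{A}_2$ in a free product sits inside $\mathcal{A}_2$), and then you kill the $\mathcal{A}_2$-part with the moment identity $\tau(az_0az_0)=\tau(a^2)\tau(z_0^2)$ using a nonzero centered self-adjoint $a\in\mathcal{A}_1$. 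Your approach is self-contained within von Neumann algebra/free probability techniques and uses each hypothesis transparently (diffuseness for the Haar unitary, $\mathcal{A}_1\ne\mathbb{C}$ for the element $a$), at the cost of the bookkeeping in your first step, which you only sketch: note that orthogonality of the powers $u^n$ alone does not quite suffice when alternating words begin or end with letters from $\mathcal{A}_2^{\circ}$, and one needs the usual asymptotic-orthogonality (weak mixing/Bessel) estimate, or an approximation by finite sums of words, to conclude that the non-$\mathcal{A}_2$ components vanish; this is standard and you rightly flag it as the main labor. The paper's route buys brevity by outsourcing the analysis to the unique-trace theorem for reduced free product C*-algebras, while yours avoids any C*-algebraic input and exhibits the triviality of the center explicitly.
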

\begin{proof}
Let $\mathcal{B}$ be the C{*}-subalgebra of $\mathcal{A}$ generated
by the union of the copies of $\mathcal{A}_{1}$ and $\mathcal{A}_{2}$
in $\mathcal{A}$. Then $\mathcal{B}$ is isomorphic to the C{*}-algebra
reduced free product of $(\mathcal{A}_{1},\tau_{1})$ and $(\mathcal{A}_{2},\tau_{2})$,
and it is dense in $\mathcal{A}$ in the strong operator topology.
By Proposition 3.2 of \cite{ken-stable-rank}, $\mathcal{B}$ has
a unique tracial state. It follows that $\mathcal{A}$ has a unique
normal tracial state. As $\mathcal{A}$ is clearly infinite dimensional,
it is a II$_{1}$ factor. 
\end{proof}
Next, we will argue that arbitrary projections in a factor of type
II$_{1}$ can be perturbed into general position by letting one of
them evolve according to free Brownian motion. This perturbation will
take place typically in a larger factor obtained as a free product,
with the free Brownian motion in one of the factors.

Let $\mathcal{A}$ be a II$_{1}$ factor with trace $\tau$, and let
$P,Q\in\mathcal{A}$ be two projections. Let $U_{t}$ be a free right
unitary Brownian motion, free from $\{ P,Q\}$. Recall that a free
right unitary Brownian motion is the solution of the free stochastic
differential equation \[
U_{0}=1,\quad dU_{t}=iU_{t}\, dX_{t}-\frac{1}{2}U_{t}t\, dt,\]
where $X_{t}$ is a standard additive free Brownian motion (cf. \cite{biane}).
For our purposes, the following three properties of a unitary Brownian
motions are crucial:

\begin{enumerate}
\item $t\mapsto U_{t}$ is norm-continuous;
\item for any $\varepsilon>0$, $U_{t}^{*}U_{t+\varepsilon}$ is free from
$U_{s}$ for all $s<t$.
\end{enumerate}
For the purposes of the following result, we will say that $P$ and
$Q$ are in general position if $\tau(P\wedge Q)=\max\{0,\tau(P)+\tau(Q)-1\}$.

\begin{thm}
The projections $U_{t}PU_{t}^{*}$ and $Q$ are in general position
for every $t>0$.
\end{thm}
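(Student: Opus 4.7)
The plan is to (i) reduce to the case $\tau(P)+\tau(Q)\le 1$, (ii) recast the question as the absence of a spectral atom, and (iii) exploit the atomlessness of the distribution of $U_t$ for $t>0$.

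From the identity $\tau(P\wedge Q)=\tau(P)+\tau(Q)-\tau(1)+\tau(P^{\perp}\wedge Q^{\perp})$, the general-position condition is invariant under $(P,Q)\mapsto(P^{\perp},Q^{\perp})$. Replacing if necessary, I may assume $\tau(P)+\tau(Q)\le 1$, and the goal becomes $U_tPU_t^*\wedge Q=0$ for every $t>0$. Recasting spectrally, let $A_t:=PU_t^*QU_tP\in P\mathcal{A}P$, a positive contraction; then $\tau(U_tPU_t^*\wedge Q)=\tau(\chi_{\{1\}}(A_t))$, so the task reduces to showing that the spectral distribution of $A_t$ (with respect to $\tau|_{P\mathcal{A}P}$) puts no mass on $\{1\}$ when $t>0$.

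The structural input is Biane's explicit description of the spectral distribution $\nu_t$ of $U_t$ on $\mathbb{T}$: for every $t>0$, $\nu_t$ is continuous (absolutely continuous on its support, which is an arc for $t<4$ and all of $\mathbb{T}$ for $t\ge 4$), and in particular atomless. Since $U_t$ is free from $\{P,Q\}$, the joint distribution of $(U_t,P,Q)$ is determined by $\nu_t$ and the joint distribution of $(P,Q)$ alone, so the spectrum of $A_t$ is a functional of these data. I would then show---via a subordination argument for multiplicative free convolution, or equivalently the Cauchy-transform equation derived from It\^o's formula applied to the SDE defining $U_t$---that atomlessness of $\nu_t$ forces the spectral distribution of $A_t$ to have no atom at the edge point $1$.

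Properties (1) and (2) of the statement enter naturally: for $0<s<t$, the decomposition $U_t=U_s(U_s^*U_t)$ together with property (2) makes $U_s^*U_t$ free from $\{U_s,P,Q\}$ and distributed as $U_{t-s}$, which reduces the analysis at time $t$ to a single free multiplicative increment acting on the configuration at time $s$; property (1) then supplies the norm continuity of $t\mapsto U_tPU_t^*$ needed to take limits. The main obstacle is exactly the final atom-removal step: while moment identities deliver every moment $\tau(A_t^n)$ cheaply, excluding a point mass at the top of the spectrum requires a resolvent-level analysis---most naturally the subordination machinery for multiplicative free convolution in the Biane--Speicher framework---which is technical but standard for free unitary Brownian motion.
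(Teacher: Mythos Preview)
Your reduction to $\tau(P)+\tau(Q)\le 1$ and the reformulation in terms of a spectral atom of $A_t=PU_t^*QU_tP$ at $1$ are both fine, and the paper begins the same way. But the heart of the argument---the step you label ``the main obstacle''---is not actually carried out, and the tools you invoke do not straightforwardly apply. Subordination for multiplicative free convolution governs products of \emph{free} operators; here $U_tPU_t^*$ is \emph{not} free from $Q$ when $t<\infty$ (that would require $U_t$ to be Haar), and $P,Q$ have an arbitrary joint distribution. So the standard Biane--Speicher subordination does not directly address the atom of $A_t$ at $1$. Atomlessness of $\nu_t$ alone is too weak a hypothesis to feed into that machinery: one would need a genuine analysis of the free Jacobi process $s\mapsto PU_s^*QU_sP$ with arbitrary initial pair $(P,Q)$, which is nontrivial and not ``standard.'' Your alternative suggestion of a Cauchy-transform PDE derived from the free SDE is closer in spirit to a workable route, but as stated it is only a pointer, not an argument.

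The paper's proof is much more elementary and avoids spectral analysis entirely. Arguing by contradiction, it fixes $t>0$, supposes $R:=U_tPU_t^*\wedge Q\ne 0$, and considers the scalar function
\[
f(s)=\tau\bigl((R\,U_sPU_s^*\,R-R)^2\bigr),\qquad s\ge 0.
\]
This is nonnegative and vanishes at $s=t$, so $t$ is a minimum. A single free It\^o computation (using only that $U_t$ solves $dU_t=iU_t\,dX_t-\tfrac12 U_t\,dt$ and that $R\,U_tPU_t^*\,R=R$) gives
\[
f'(t)=\tau(R)\bigl(-1+\tau(P)+\tau(R)-\tau(R)\tau(P)\bigr)\le -\tau(R)^2\tau(P)<0,
\]
a contradiction. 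No resolvents, no subordination, no information about $\nu_t$ beyond the SDE. If you want to pursue your line, the correct replacement for the subordination step is exactly this It\^o derivative computation, applied not to the full Cauchy transform but to a single quadratic moment tailored to the putative intersection projection $R$.
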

\begin{proof}
Fix $t>0$, and set $P_{t}=U_{t}PU_{t}^{*}$. As in the proof of Proposition
\ref{pro:Zariski}, we may and shall assume that and $\tau(P)+\tau(Q)\le1$.
Arguing by contradiction, assume that $P_{t}\wedge Q\ne0$. Setting
$R=P_{t}\wedge Q$, observe that the function\[
f(s)=\tau((RP_{s}R-R)^{2}),\quad s\ge0,\]
is nonnegative, and therefore $f$ has a minimum at $s=t$. The fact
that $U_{t}$ is a free Brownian motion, and Ito calculus, imply that
$f$ is a differentiable function, and \[
f'(t)=\tau(R)(-1+\tau(P)+\tau(R)-\tau(R)\tau(P)),\]
where we used the fact that $(RP_{t}R)^{2}=RP_{t}R=R$. Now, we have
$0<\tau(R)\le\tau(P)$ and $1-\tau(P)\ge\tau(R)$, so that this relation
implies\[
f'(t)\le-\tau(R)^{2}\tau(P)<0.\]
This however is not compatible with $f(t)$ being a minimum.
\end{proof}
If $\mathcal{E}$ and $\mathcal{F}$ are two flags in $\mathcal{A}$,
the preceding result yields a unitary $U$, arbitarrily close to $1$,
so that the spaces of the flag $U\mathcal{F}U^{*}$ are in general
position relative to the spaces of $\mathcal{E}$. Dealing with three
flags would require the use of two Brownian motions, free from each
other and from the flags. In order to obtain flags which are generic
for a given intersection of three Schubert cells, this construction
would have to be iterated following the inductive procedure of Proposition
\ref{prop:reduction-of-AB(m)}.


\begin{thebibliography}{10}
\bibitem{belkale}P. Belkale, Local systems on $P^{1}\backslash S$ for $S$ a finite
set, \emph{Compositio Math.} \textbf{129}(2001), no. 1, 67--86.
\bibitem{belk-horn}---------, Geometric proofs of Horn and saturation conjectures. \emph{J.
Algebraic Geom.} \textbf{15}(2006), no. 1, 133--173. 
\bibitem{ber-li-freede}H. Bercovici and W. S. Li, Inequalities for eigenvalues of sums in
a von Neumann algebra. \emph{Recent advances in operator theory and
related topics} (Szeged, 1999), 113--126, \emph{Oper. Theory Adv.
Appl.}, \textbf{127}, Birkhäuser, Basel, 2001.
\bibitem{Ber-Li-Romega}---------, Eigenvalue inequalities in an embeddable factor, \emph{Proc.
Amer. Math. Soc.} \textbf{134}(2006), no. 1, 75--80.
\bibitem{ber-li-smo}H. Bercovici, W. S. Li, and T. Smotzer, Continuous versions of the
Littlewood-Richardson rule, selfadjoint operators, and invariant subspaces,
\emph{J. Operator Theory} \textbf{54}(2005), no. 1, 69--92.
\bibitem{BLT}H. Bercovici, W. S. Li, and D. Timotin, The Horn conjecture for sums
of compact selfadjoint operators, arXiv:math.FA/0709.1088.
\bibitem{biane}Ph. Biane, Free Brownian motion, free stochastic calculus and random
matrices, \emph{Free probability theory} (Waterloo, ON, 1995), 1--19,
Fields Inst. Commun., 12, Amer. Math. Soc., Providence, RI, 1997. 
\bibitem{buch-fulton-sat}A. S. Buch, The saturation conjecture (after A. Knutson and T. Tao).
With an appendix by William Fulton. \emph{Enseign. Math. (2)} \textbf{46}(2000),
no. 1-2, 43--60.
\bibitem{CoDy-reduction}B. Collins and K. Dykema, On a reduction procedure for Horn inequalities
in finite von Neumann algebras, arXiv:math.OA/0711.3930.
\bibitem{CoDy-linearization}---------, A Linearization of Connes' Embedding Problem, arXiv:math.OA/0706.3918 
\bibitem{day-tho}J. Day, W. So, and R. C. Thompson, The spectrum of a Hermitian matrix
sum, \emph{Linear Algebra Appl.} \textbf{28} (1998), no. 2-3, 289--332. 
\bibitem{ken-stable-rank}K. Dykema, Simplicity and the stable rank of some free product C{*}-algebras,
\emph{Trans. Amer. Math. Soc.} \textbf{351}(1999), 1-40.
\bibitem{friedland}S. Friedland, Finite and infinite dimensional generalizations of Klyachko's
theorem, \emph{Linear Algebra Appl.} \textbf{319}(2000), no. 1-3,
3--22.
\bibitem{ful-bourb}W. Fulton, Eigenvalues of sums of Hermitian matrices (after A. Klyachko),
\emph{Séminaire Bourbaki} vol. 1997/1998, Exp. No. 845, \emph{Asterisque}
\textbf{252}(1998), pp. 255--269.
\bibitem{ful-bull}---------, Eigenvalues, invariant factors, highest weights, and Schubert
calculus, \emph{Bull. Amer. Math. Soc.} \textbf{37}(2000), 209--249.
\bibitem{ful-linalg}---------, Eigenvalues of majorized Hermitian matrices and Littlewood-Richardson
coefficients, \emph{Linear Algebra Appl.} \textbf{319}(2000), 23--36.
\bibitem{haag-sch1}U. Haagerup and H. Schultz, Hanne, Brown measures of unbounded operators
affiliated with a finite von Neumann algebra, \emph{Math. Scand.}
\textbf{100}(2007), no. 2, 209--263.
\bibitem{haag-sch2}---------, Invariant Subspaces for Operators in a General II$_{1}$-factor,
arXiv:math.OA/0611256.
\bibitem{horn}A. Horn, Eigenvalues of sums of Hermitian matrices, \emph{Pacific
J. Math}., \textbf{12}(1962), 225--241.
\bibitem{klyach}A. A. Klyachko, Stable bundles, representation theory and Hermitian
operators. \emph{Selecta Math. (N.S.)} \textbf{4}(1998), no. 3, 419--445.
\bibitem{KT}A. Knutson and T. Tao, The honeycomb model of GL$_{n}(\mathbb{C})$
tensor products. I. Proof of the saturation conjecture, \emph{J. Amer.
Math. Soc.} \textbf{12}(1999), no. 4, 1055--1090.
\bibitem{KTW}A. Knutson, T. Tao, and C. Woodward, The honeycomb model of GL$_{n}(\mathbb{C})$
tensor products. II. Puzzles determine facets of the Littlewood-Richardson
cone, \emph{J. Amer. Math. Soc.} \textbf{17}(2004), no. 1, 19--48. 
\bibitem{tho-free1}R. C. Thompson and L. J. Freede, On the eigenvalues of sums of Hermitian
matrices, \emph{Linear Algebra and Appl.} \textbf{4}(1971), 369--376.
\bibitem{tho-free2}---------, On the eigenvalues of sums of Hermitian matrices. II, \emph{Aequationes
Math}. \textbf{5}(1970), 103--115.
\bibitem{th-th}R. C. Thompson and S. Therianos, On a construction of B. P. Zwahlen.
\emph{Linear and Multilinear Algebra} \textbf{1}(1973/74), 309--325.
\bibitem{vakil:geom-rule}R. Vakil, A geometric Littlewood-Richardson rule. Appendix A written
with A. Knutson, \emph{Ann. of Math.} (2) \textbf{164}(2006), no.
2, 371--421. 
\bibitem{vakil-induction}---------, Schubert induction, \emph{Ann. of Math.} (2) \textbf{164}(2006),
no. 2, 489--512. 
\end{thebibliography}
\end{document}